\documentclass[twoside,10pt]{article}

\topmargin        -0.08in
\oddsidemargin    -0.08in \evensidemargin   -0.08in
\marginparwidth    0.00in \marginparsep      0.00in
\textwidth         16.0cm \textheight        22.5cm
\vskip 0.2cm

\usepackage[]{hyperref}                        
\usepackage{amsmath}
\usepackage{amsthm}
\usepackage{bm,latexsym}
\usepackage{amsfonts}
\usepackage{indentfirst}
\usepackage{amsxtra}
\usepackage{amssymb}
\newtheorem{Lemma}{Lemma}[section]
\newtheorem{Theorem}{Theorem}

\theoremstyle{remark}
\newtheorem{Remark}{Remark}[section]

\pagestyle{myheadings}
\markboth {{\rm Lili Fan, Hongxia Liu, Tao Wang, and Huijiang Zhao}}
{{\rm  Inflow Problem for the One-dimensional Compressible Navier-Stokes Equations}}

\numberwithin{equation}{section}


\begin{document}

\title{\bf
Inflow Problem for the One-dimensional Compressible Navier-Stokes Equations under Large Initial Perturbation}
\author{{\bf  Lili Fan}\\
Department of Mathematic and Physics,
Wuhan Polytechnic University\\
 Wuhan 430023, China\\[2mm]
{\bf Hongxia Liu}\\
Department of Mathematics, Jinan University\\
Guangzhou 510632, China\\[2mm]
{\bf Tao Wang}\thanks{Corresponding author. Email address: pdewangtao@gmail.com}\\
School of Mathematics and Statistics, Wuhan University\\
Wuhan 430072, China\\[2mm]
{\bf Huijiang Zhao}\\
School of Mathematics and Statistics, Wuhan University\\
Wuhan 430072, China}

\date{} \maketitle


\begin{abstract}
This paper is concerned with the inflow problem for the one-dimensional compressible Navier-Stokes equations. For such a problem, Matsumura and Nishihara showed in [A. Matsumura and K. Nishihara, Large-time behaviors of solutions to an inflow problem in the half space for a one-dimensional system of compressible viscous gas. Comm. Math. Phys. 222 (2001), 449-474] that there exists boundary layer solution to the inflow problem and both the boundary layer solution, the rarefaction wave, and the superposition of boundary layer solution and rarefaction wave are nonlinear stable under small initial perturbation. The main purpose of this paper is to show that similar stability results for the boundary layer solution and the supersonic rarefaction wave still hold for a class of large initial perturbation which can allow the initial density to have large oscillation. The proofs are given by an elementary energy method and the key point is to deduce the desired lower and upper bounds on the density function.

\bigbreak
 \noindent {\bf Keywords}: Compressible Navier-Stokes equations; Boundary layer solution; Inflow problem;
 Rarefaction wave; Large initial perturbation; Large density oscillation; Continuation process.
\end{abstract}

\tableofcontents

\section{Introduction}

This paper is concerned with the large time behaviors of solutions to the inflow problem for one-dimensional compressible Navier-Stokes equations on the half line $\mathbb{R}_+=(0,+\infty)$, which is an initial-boundary value problem in Eulerian coordinates:
\begin{equation} \label{EulerI}
  \begin{cases}
    \rho_t+(\rho u)_{{x}}=0,&\textrm{in}\ \mathbb{R}_+ \times\mathbb{R}_+,\\[2mm]
    (\rho u)_t+(\rho u^2 +\tilde{p})_{{x}}=\mu u_{{x}{x}},&\textrm{in}\ \mathbb{R}_+ \times\mathbb{R}_+,\\[2mm]
    (\rho,u)|_{{x}=0}=(\rho_-,u_-),&u_->0,\\[2mm]
    (\rho,u)(0,{x})=(\rho_0,u_0)({x})\to (\rho_+,u_+), \quad &\textrm{as}\ {x}\to+\infty.
  \end{cases}
\end{equation}
Here, $\rho(>0)$, $u$, and  $\tilde{p}=\tilde{p}(\rho) =\rho^{\gamma}$ with $\gamma\geq 1$ being the adiabatic exponent
are, respectively, the density, the velocity, and the pressure, while the viscosity coefficient $\mu(>0)$,
$\rho_{\pm}(> 0)$ and $u_{\pm}$ are constants.

We assume that the initial data $(\rho_0(x), u_0(x))$ satisfy the boundary condition \eqref{EulerI}$_3$ as a compatibility condition, i.e.
$$
\rho_0(0)=\rho_-,\quad u_0(0)=u_-.
$$
The assumption $u_- > 0$ implies that, through the boundary $x=0$ the fluid
with the density $\rho_-$ flows into the region $\mathbb{R}_+$,
and hence the problem (\ref{EulerI}) is called the inflow problem.
The cases of $u_-= 0$ and $u_- < 0$, the problems where the condition
$\rho(t,0) =\rho_- $ is removed, are called the impermeable wall problem and the outflow problem, respectively.

For the case of $u_- > 0$,  as in~\cite{M-N2001}, the inflow problem (\ref{EulerI}) can then be transformed to the problem in the Lagrangian coordinates:
\begin{equation} \label{LagrangeI}
  \begin{cases}
    v_t-u_x=0,   &x>s_- t,\ t>0, \\[2mm]
    u_t+p(v)_x=\mu\left(\frac{u_x}{v}\right)_x, & x>s_- t,\ t>0, \\[2mm]
    (v,u)|_{x=s_-t}=(v_-,u_-),&u_->0,\\[2mm]
    (v,u)|_{t=0}=(v_0,u_0)(x)\to (v_+,u_+),&\textrm{as}\ x\to +\infty,
  \end{cases}
\end{equation}
where
\begin{equation}
  p(v)=v^{-\gamma},\quad v=\frac{1}{\rho},\quad v_{\pm}=\frac{1}{\rho_{\pm}},\quad s_-=-\frac{u_-}{v_-}<0.
\end{equation}
The characteristic speeds of the corresponding hyperbolic system of~(\ref{LagrangeI}) are
\begin{equation}
  \lambda_1=-\sqrt{-p'(v)},\quad \lambda_2=\sqrt{-p'(v)},
\end{equation}
and the sound speed $c(v)$ is defined by
\begin{equation}
  c(v)=v\sqrt{-p'(v)}=\sqrt{\gamma}v^{-\frac{\gamma-1}{2}}.
\end{equation}
Comparing $|u|$ with $c(v)$, we divide the phase space $\mathbb{R}_+\times\mathbb{R}_+$ into three regions:

\begin{equation*}
  \begin{aligned}
    \Omega_{sub}&:=\left\{(v,u);\ |u|<c(v),v>0,u>0\right\},\\[2mm]
    \Gamma_{trans}&:=\left\{(v,u);\ |u|=c(v),v>0,u>0\right\},\\[2mm]
    \Omega_{super}&:=\left\{(v,u);\ |u|>c(v),v>0,u>0\right\},
  \end{aligned}
\end{equation*}
which are called the subsonic, transonic and supersonic regions, respectively.

In the phase plane, we denote the curves through a point $(v_1,u_1)$
\begin{equation*}
    BL(v_1,u_1):=\left\{(v,u)\in\mathbb{R}_+\times\mathbb{R}_+;\ \frac{u}{v}=\frac{u_1}{v_1}\right\}
\end{equation*}
and
\begin{equation*}
    R_i(v_1,u_1):=\left\{(v,u)\in\mathbb{R}_+\times\mathbb{R}_+;\ u=u_1-\int_{v_1}^v \lambda_i(s)ds,\ u>u_1\right\}\quad (i=1,2)
\end{equation*}
as the boundary layer line and the $i$-rarefaction wave curve, respectively.

For the precise description of the large time behaviors of solutions to the initial-boundary value problem in the half line for the one-dimensional isentropic model system \eqref{EulerI}$_1$-\eqref{EulerI}$_2$ of compressible viscous gas, a complete classification in terms of $(v_{\pm},u_{\pm})$ for the impermeable wall problem, the inflow problem, and the outflow problem is given by Matsumura in \cite{M-MAA-2001}. For the rigorous mathematical justification of this classification, some results have been obtained which can be summarized as in the following:
\begin{itemize}
\item For the impermeable wall problem, to describe its large time behaviors, it is unnecessary to introduce the boundary layer solution and the nonlinear stability of the viscous shock wave and the rarefaction wave are well-understood, cf. \cite{M-Mei}, \cite{M-N2000}. It is worth to pointing out that although the nonlinear stability result for the viscous shock wave in \cite{M-Mei} is obtained only for small initial perturbation, the corresponding result in \cite{M-N2000} for the rarefaction wave holds for any large initial perturbation;

\item For the outflow problem, Kawashima, Nishibata, and Zhu~\cite{Kawashima-Nishibata-Zhu} and Kawashima and Zhu~\cite{Kawahima-Zhu} showed that the boundary layer solution together with the superposition of the boundary layer solution and the rarefaction wave are asymptotically nonlinear stable under small initial perturbation, while Nakamura, Nishibata and Yuge~\cite{Nakamura-Nishibata-Yuge} investigated the convergence rate toward the boundary layer solution. Recently, Huang and Qin~\cite{H-Q} show that not only the boundary layer solution but also the superposition of the boundary layer solution and the rarefaction wave are still stable under large initial perturbation and improve the works of~\cite{Kawashima-Nishibata-Zhu} and~\cite{Kawahima-Zhu};
\item For the inflow problem~(\ref{LagrangeI}), Matsumura and Nishihara~\cite{M-N2001} established the asymptotic stability of the boundary layer solution and the superposition of the boundary layer solution and the rarefaction wave when $(v_-,u_-)\in\Omega_{sub}$ together with the assumption that the initial perturbation is small. Shi~\cite{Shi} studied the rarefaction wave case when $(v_-,u_-)\in\Omega_{super}$ under small initial perturbation. Huang, Matsumura and Shi~\cite{H-M-S} demonstrated the stability of the viscous shock wave and the boundary layer solution to the inflow problem (\ref{LagrangeI}), also under small initial perturbation.
\end{itemize}

It is worth to pointing pout that for the the impermeable wall problem and the outflow problem, the corresponding stability results on the boundary layer solution, the rarefaction wave, and/or their superposition hold true even for certain class of large initial perturbation. Thus a problem of interest is how about the case for the inflow problem, that is, do similar stability results on the boundary layer solution and the rarefaction wave hold for the inflow problem? The main purpose of this paper is devoted to this problem. More precisely, what we are interested in this paper is to consider the following two cases concerning the boundary layer solution and the rarefaction wave for the inflow problem \eqref{EulerI}:
\begin{itemize}
\item[{\bf Case I:}] $(v_-,u_-)\in \Omega_{sub}$ and $(v_+,u_+)\in BL_+(v_-,u_-)\cup BL_-(v_-,u_-)$.
Then the time-asymptotic state of the solutions to the inflow problem \eqref{EulerI} is described by the boundary layer solution $(V,U)(x-s_-t)$ which connects $(v_-,u_-)$ with $(v_+,u_+)$,
where
  $$BL_+(v_-,u_-):=\{(v,u)\in BL(v_-,u_-);\ v_-<v\leq v_* \}$$
  and
  \begin{equation*}
    BL_-(v_-,u_-):=\{(v,u)\in BL(v_-,u_-);\ 0<v<v_-\}.
  \end{equation*}
Here,
$(v_*,u_*)$ is the intersection point of $BL(v_-,u_-)$ and $\Gamma_{trans}$, i.e.,
\begin{equation}
  \label{vstar}
  \sqrt{-p'(v_*)}=\frac{u_-}{v_-},\quad u_*=\frac{u_-}{v_-}v_*.
\end{equation}
The boundary layer solution $(V,U)(x-s_-t)$ will be explained in the next section.
\item[{\bf Case II:}] $(v_-,u_-)\in \Omega_{super}$ and $(v_+,u_+)\in R_1(v_-,u_-)$ (or $(v_+,u_+)\in R_2(v_-,u_-)$).
Then the time-asymptotic state of the solutions to the inflow problem (\ref{EulerI}) is given by the 1-rarefaction wave $(v_1^r,u_1^r)(x/t)$ (or the 2-rarefaction wave $(v_2^r,u_2^r)(x/t)$)
connecting $(v_-,u_-)$ with $(v_+,u_+)$.
\end{itemize}
What we want to show is on the nonlinear stability of both the boundary layer solution and the rarefaction wave for a class of large initial perturbation which can allow the initial density to have large oscillation, which improve the works
of Matsumura and Nishihara~\cite{M-N2001} and Shi~\cite{Shi}.
The precise statements of our main results will be given in Theorems 1-3 below.

The present paper is organized as follows. After stating the notations, in section 2,
we introduce some properties of the boundary layer solution
and the smooth rarefaction wave, and then state the main results.
In section 3, we establish a priori estimates and then prove the
stability of boundary layer by making use of Kanel's technique.
In section 4, the stability of rarefaction wave under large initial perturbation
 will be treated by the similar method.\\

\noindent\emph{Notations.}
Throughout this paper, $c$ and $C$ denote some positive constant (generally large), $\epsilon$, $\lambda$ stand for some positive constants (generally small), and $C(\cdot,\cdot)$ denotes for some generic positive constant depending only on the quantities listed in the parenthesis. Notice that all the constants $c$, $C$, $C(\cdot,\cdot)$, $\epsilon$, and $\lambda$ may take different values in different places. $A\lesssim B$ means that there is a generic constant $C>0$ such that $A\leq CB$ and $A\sim B$ means $A\lesssim B$ and $B\lesssim A$.
For function spaces,~$L^p(\mathbb{R}_+)(1\leq p\leq \infty)$~denotes
the usual Lebesgue space on~$\mathbb{R}_+$~with norm~$\|{\cdot}\|_{L^p}$
and~$H^k(\mathbb{R}_+)$~the usual Sobolev space in the $L^2$ sense with norm~$\|\cdot\|_p$.
We note~$\|\cdot\|=\|\cdot\|_{L^2}$~for simplicity.
Finally, We denote by $C^k(I; H^p)$ the space of $k$-times continuously
differentiable functions on the interval $I$ with values in
$H^p(\mathbb{R}_+)$ and~$L^2(I; H^p)$ the space of~$L^2$-functions
on $I$ with values in~$H^p(\mathbb{R}_+)$.


\section{Preliminaries and Main Results}

\subsection{Boundary Layer Solution}

First we recall some properties about the boundary layer solution.
In~\cite{M-N2001}, it is shown that
if $(v_-,u_-)\in \Omega_{sub}$ and $(v_+,u_+)\in BL_+(v_-,u_-)\cup BL_-(v_-,u_-)$,
the solution to (\ref{LagrangeI}) tends to a boundary layer $(V,U)(\xi)\equiv(V,U)(x-s_-t)$
which is defined by
\begin{equation} \label{BL}
  \begin{cases}
    -s_-V_{\xi}-U_{\xi}=0,\qquad \xi>0,\\[2mm]
    -s_-U_{\xi}+p(V)_{\xi}=\mu\left(\frac{U_{\xi}}{V}\right)_{\xi},\\[2mm]
    (V,U)(0)=(v_-,u_-),\quad (V,U)(\infty)=(v_+,u_+).
  \end{cases}
\end{equation}
The strength of the boundary layer solution $(V,U)(\xi)$ is measured by
\begin{equation}
  \delta:=|u_+-u_-|.
\end{equation}
The existence and the properties of the boundary layer solution $(V,U)(\xi)$ are given in the following lemma.

\begin{Lemma}[cf. \cite{M-N2001}]
If $(v_-,u_-)\in\Omega_{sub}$ and $(v_+,u_+)\in BL_+(v_-,u_-)\cup BL_-(v_-,u_-)$,
there exists a unique solution $(V,U)(\xi)$ to (\ref{BL}) satisfying
for $k=0,1,2,$
\begin{equation} \label{BLdecay}
  \begin{aligned}
    \left|\partial_{\xi}^k(V-v_+,U-u_+)(\xi)\right|&\leq C \delta e^{-c\xi}~\! \qquad \qquad\qquad\textrm{if}\ v_+<v_*,\\[2mm]
    \left|\partial_{\xi}^k(V-v_+,U-u_+)(\xi)\right|&\leq C \delta^{1+k} (1+\delta\xi)^{-1-k} \quad \textrm{if}\ v_+=v_*,
  \end{aligned}
\end{equation}
and
\begin{equation}
  \label{BLdecay1}
  \left|(V_{\xi},V_{\xi\xi},U_{\xi\xi})\right|\leq C |U_{\xi}|,
\end{equation}
the constants $c$ and $C$ depending only on $(v_-,u_-)$.
Furthermore, the boundary layer $(V,U)(\xi)$ is monotonic, that is,
$V_{\xi}\gtrless 0$ and $U_{\xi}\gtrless 0$ if $u_+\gtrless u_-$.
\end{Lemma}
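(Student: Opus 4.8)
\emph{Proof proposal.}
The plan is to reduce the boundary-value problem \eqref{BL} to a scalar first-order autonomous ODE and then run a phase-line analysis. First I would use \eqref{BL}$_1$, i.e. $U_\xi=-s_-V_\xi$, and integrate it to get $U=u_+-s_-(V-v_+)$ (equivalently $U/V\equiv u_-/v_-$, so the orbit necessarily lies on $BL(v_-,u_-)$). Substituting this into \eqref{BL}$_2$ and integrating once over $(\xi,\infty)$, using $(V,U)(\infty)=(v_+,u_+)$ and $U_\xi(\infty)=0$, one finds
\begin{equation*}
  \mu\,\frac{U_\xi}{V}=F(V):=s_-^2(V-v_+)+p(V)-p(v_+),
  \qquad\text{hence}\qquad
  V_\xi=\frac{V}{\mu|s_-|}\,F(V)=:G(V),
\end{equation*}
with $V(0)=v_-$; conversely, any positive solution of $V_\xi=G(V)$ with this boundary value gives, via $U=u_+-s_-(V-v_+)$, a genuine solution of \eqref{BL}, so it suffices to analyse the scalar equation.

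The behaviour of $G$ is dictated by $F$. Since $p'(V)=-\gamma V^{-\gamma-1}$ is strictly increasing, $F'(V)=s_-^2+p'(V)$ is strictly increasing and, by the definition \eqref{vstar} of $v_*$, vanishes exactly at $V=v_*$; hence $F$ decreases strictly on $(0,v_*)$, increases strictly on $(v_*,\infty)$, and $F(v_+)=0$. Because $(v_-,u_-)\in\Omega_{sub}$ forces $v_-<v_*$, the sign of $G$ on the interval between $v_-$ and $v_+$ is determined: if $(v_+,u_+)\in BL_+(v_-,u_-)$ (so $v_-<v_+\le v_*$, i.e. $u_+>u_-$) then $F>0$, hence $V_\xi>0$, on $(v_-,v_+)$; if $(v_+,u_+)\in BL_-(v_-,u_-)$ (so $v_+<v_-<v_*$, i.e. $u_+<u_-$) then $F<0$, hence $V_\xi<0$, on $(v_+,v_-)$. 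Local existence and uniqueness for $V_\xi=G(V)$, $V(0)=v_-$, is immediate as $G\in C^\infty$; the constant sign of $G$ and the fact that $v_+$ is its only zero between $v_-$ and $v_+$ show the solution stays strictly monotone and trapped in the open interval with endpoints $v_\pm$, so it is global, positive, and tends to $v_+$ as $\xi\to+\infty$. Then $(V,U)$ with $U=u_+-s_-(V-v_+)$ is the claimed solution: it is monotone with the stated signs, $U>0$ since $U/V\equiv u_-/v_->0$, and uniqueness descends from that of the scalar problem.

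It remains to derive \eqref{BLdecay} and \eqref{BLdecay1}. When $v_+<v_*$ the equilibrium is nondegenerate: $F(V)/(V-v_+)$ extends continuously and stays strictly negative on the compact interval between $v_-$ and $v_+$, so $G(V)/(V-v_+)\le-c<0$ there, whence $\tfrac{d}{d\xi}|V-v_+|=\bigl(G(V)/(V-v_+)\bigr)|V-v_+|\le-c|V-v_+|$ yields $|V-v_+|\le|v_--v_+|e^{-c\xi}\le C\delta e^{-c\xi}$ (using $|v_--v_+|=\delta/|s_-|$); since $|F(V)|\le C|V-v_+|$, $U-u_+=-s_-(V-v_+)$, and $V_{\xi\xi}=G'(V)V_\xi$ etc. by repeated differentiation of $V_\xi=G(V)$, all derivatives up to order two of $(V-v_+,U-u_+)$ are bounded by $C\delta e^{-c\xi}$, and \eqref{BLdecay1} follows from $|U_\xi|=|s_-||V_\xi|$, $|V_{\xi\xi}|\le C|V_\xi|$, $|U_{\xi\xi}|=|s_-||V_{\xi\xi}|$. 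The main obstacle is the degenerate case $v_+=v_*$: here $F'(v_*)=0$ but $F''(v_*)=p''(v_*)>0$, so with $w:=v_*-V>0$ one has $w_\xi=-G(v_*-w)=-a(w)w^2$, where $a(w)=\tfrac{v_*-w}{2\mu|s_-|}p''(v_*)\bigl(1+O(w)\bigr)$ lies between two positive constants. Then $\bigl(1/w\bigr)_\xi=a(w)$ integrates to $1/w(\xi)=1/w(0)+\int_0^\xi a(w)\,ds$, and since $w(0)=v_*-v_-\sim\delta$ this gives $w(\xi)\sim\delta(1+\delta\xi)^{-1}$; feeding this into $w_\xi=-a(w)w^2$ and its derivatives shows each $\xi$-derivative costs a factor $w$, so $|\partial_\xi^k(V-v_*)|\lesssim w^{k+1}\lesssim\delta^{1+k}(1+\delta\xi)^{-1-k}$ for $k=0,1,2$, and likewise for $U-u_*=-s_-(V-v_*)$; \eqref{BLdecay1} holds since $|U_\xi|\sim w^2$ while $|V_\xi|\sim w^2$ and $|V_{\xi\xi}|,|U_{\xi\xi}|\sim w^3\lesssim w^2$. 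The remaining care is purely bookkeeping: checking that every constant depends only on $(v_-,u_-)$, in particular uniformly in $\delta$ over the nondegenerate range.
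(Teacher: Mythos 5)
The paper does not actually prove this lemma---it is quoted from Matsumura--Nishihara \cite{M-N2001}---and your reduction of \eqref{BL} to the scalar equation $V_\xi=G(V)=\frac{V}{\mu|s_-|}\bigl(s_-^2(V-v_+)+p(V)-p(v_+)\bigr)$ with a phase-line analysis (nondegenerate zero of $F$ when $v_+<v_*$, quadratic zero when $v_+=v_*$) is exactly the standard argument behind the cited result and is essentially correct, including the identity $\mu U_\xi=V\bigl(s_-^2(V-v_+)+p(V)-p(v_+)\bigr)$ that the paper itself invokes in the proof of Lemma 3.1. Two loose ends worth tightening: the condition $U_\xi(\infty)=0$ used to fix the integration constant should be justified rather than assumed (it follows because $-s_-U+p(V)-\mu U_\xi/V$ is constant along solutions and $(V,U)\to(v_+,u_+)$), and the constants your argument produces genuinely depend on the whole interval between $v_-$ and $v_+$ (the exponential rate $c$ degenerates as $v_+\uparrow v_*$, and $\sup|G'|$ grows as $v_+\downarrow0$ in the $BL_-$ case), so the final ``bookkeeping'' claim that $c,C$ depend only on $(v_-,u_-)$ is not literally delivered, though this mirrors the looseness of the quoted statement rather than a defect in your construction.
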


The first aim of this paper is to show the boundary layer solution
obtained in Lemma 2.1 is still stable under some large initial
perturbation.
Defining the perturbation $(\phi,\psi)(t,\xi)$ by
\begin{equation}
(\phi,\psi)(t,\xi)=(v,u)(t,\xi)-(V,U)(\xi),
\end{equation}
we get from (\ref{LagrangeI}) and (\ref{BL}) that $(\phi,\psi)$ satisfies
\begin{equation} \label{BLperturbation}
  \begin{cases}
   {\phi}_t-s_{-}{\phi}_{\xi}-{\psi}_{\xi}=0, \quad  \xi>0,\ t>0,\\[2mm]
   {\psi}_t-s_{-}{\psi}_{\xi}+(p(V+\phi)-p(V))_{\xi}=
   \mu\left(\frac{U_{\xi}+{\psi}_{\xi}}{V+\phi}-\frac{U_{\xi}}{V}\right)_{\xi}, \\[2mm]
   (\phi,\psi)|_{\xi=0}=(0,0),\\[2mm]
   (\phi,\psi)|_{t=0}=({\phi}_0,{\psi}_0):=(v_0-V,u_0-U).
  \end{cases}
\end{equation}
The solution space is
\begin{equation*}
  \begin{aligned}
    X_{m,M}(0,T)
    =\bigg\{(\phi,\psi)\in C([0,T];H_0^1);\ {\phi}_{\xi}\in L^2(0,T;L^2),{\psi}_{\xi}\in L^2(0,T;H^1),&\\[2mm]
\sup_{[0,T]\times\mathbb{R}_+}(V+\phi)(t,\xi)\leq M,\inf_{[0,T]\times\mathbb{R}_+}(V+\phi)(t,\xi)\geq m&\bigg\}.
  \end{aligned}
\end{equation*}
Then the time-local existence of the solution $(\phi,\psi)(t,\xi)$ to (\ref{BLperturbation}) is
quoted in the next lemma.
\begin{Lemma}
\label{locals} (\cite{M-N2001})
Let $({\phi}_0,{\psi}_0)$ be in $H_0^1(\mathbb{R}_{+})$.
If $\sup_{{\mathbb{R}_+}}(V+\phi_0)\leq M$ and
$\inf_{{\mathbb{R}_+}}(V+\phi_0)\geq m$,
then there exists
$t_0 >0$
depending only on $m$, $M$ and $\|({\phi}_0,{\psi}_{0})\|_1$
such that (\ref{BLperturbation}) has a unique solution
$(\phi,\psi)\in X_{m/2,2M}(0,t_0)$ satisfying
\begin{equation} \label{11}
  \|(\phi,{\psi})(t)\|_1 \leq 2\|(\phi_0,\psi_0)\|_1
\end{equation}
and
\begin{equation} \label{Equality}
  s_-\phi_{\xi}(t,0)+\psi_{\xi}(t,0)=0
\end{equation}
for each $0\leq t\leq t_{0}$.
\end{Lemma}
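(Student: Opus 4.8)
Since this is a local-in-time existence result, the plan is a standard successive approximation (Picard) scheme tailored to the hyperbolic-parabolic structure of \eqref{BLperturbation}. Put $N_0:=\|(\phi_0,\psi_0)\|_1$. Starting from $(\phi^0,\psi^0)\equiv(\phi_0,\psi_0)$, I would define $(\phi^{n+1},\psi^{n+1})$ as the solution of the linear, decoupled problem
\begin{equation*}
\begin{cases}
\phi^{n+1}_t-s_-\phi^{n+1}_\xi=\psi^n_\xi,\\[1mm]
\psi^{n+1}_t-s_-\psi^{n+1}_\xi-\frac{\mu}{V+\phi^n}\,\psi^{n+1}_{\xi\xi}=F^n,\\[1mm]
(\phi^{n+1},\psi^{n+1})|_{\xi=0}=(0,0),\qquad(\phi^{n+1},\psi^{n+1})|_{t=0}=(\phi_0,\psi_0),
\end{cases}
\end{equation*}
where $F^n$ collects the remaining lower-order terms, built from $\phi^n,\phi^n_\xi,\psi^n_\xi$ and the background quantities $V,U$ (smooth and bounded with derivatives controlled by Lemma~2.1 and \eqref{BLdecay1}). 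Since $s_-<0$, the $\phi^{n+1}$-equation is a constant-coefficient transport equation solved explicitly along the characteristics $\xi=\xi_0-s_-t$: those issuing from $\{t=0\}$ carry $\phi_0$, those issuing from $\{\xi=0\}$ carry the value $0$; this produces $\phi^{n+1}\in C([0,T];H_0^1)$, with $\phi^{n+1}_\xi$ expressed through $\phi_0'$, $\psi^n_{\xi\xi}$ and the boundary trace $\psi^n_\xi(\cdot,0)$. Under the induction hypothesis $V+\phi^n\in[m/2,2M]$, the $\psi^{n+1}$-equation is uniformly parabolic with zero Dirichlet data, so classical linear theory gives a unique $\psi^{n+1}\in C([0,T];H_0^1)$ with $\psi^{n+1}_\xi\in L^2(0,T;H^1)$.

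The heart of the proof is to pick $t_0=t_0(m,M,N_0)$ so that the scheme stays in $X_{m/2,2M}(0,t_0)$ and obeys \eqref{11}. For the $\psi^{n+1}$-part I would run the $L^2$ and $H^1$ energy estimates: multiply the equation by $\psi^{n+1}$ and by $-\psi^{n+1}_{\xi\xi}$ and integrate by parts, using $\psi^{n+1}(t,0)=0$ to annihilate the boundary contributions, the drift term producing the favorable boundary term $\tfrac{|s_-|}{2}\big(\psi^{n+1}_\xi(t,0)\big)^2$, and uniform parabolicity $\frac{\mu}{V+\phi^n}\geq\frac{\mu}{2M}$. This yields $\sup_{[0,t]}\|\psi^{n+1}\|_1^2+\int_0^t\|\psi^{n+1}_{\xi\xi}\|^2\lesssim N_0^2+\int_0^t\|F^n\|^2$; the only delicate pieces of $F^n$ are the quadratic first-derivative terms such as $\psi^n_\xi\phi^n_\xi$, which are controlled by interpolation ($\|\psi^n_\xi\|_{L^\infty}\lesssim\|\psi^n_\xi\|^{1/2}\|\psi^n_{\xi\xi}\|^{1/2}$) and Cauchy-Schwarz in time so as to extract a positive power of $t$, after which $\int_0^t\|F^n\|^2$ is bounded by $t_0^{1/2}$ times a function of $m,M,N_0$. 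For $\phi^{n+1}$ the characteristic representation (and its $\xi$-derivative) gives $\sup_{[0,t]}\|\phi^{n+1}\|_1^2\lesssim N_0^2+t\!\int_0^t\!\big(\|\psi^n_\xi\|_1^2+\|\psi^n_{\xi\xi}\|^2\big)+\int_0^t|\psi^n_\xi(s,0)|^2\,ds$, the last integral being $\lesssim\|\psi^n_\xi\|_{L^2_tL^2}\|\psi^n_{\xi\xi}\|_{L^2_tL^2}$ and hence controlled by the bound on $\psi^n$ from the previous step. Combining, if $\sup_{[0,t]}\|(\phi^n,\psi^n)\|_1\leq 2N_0$ and $\int_0^t\|\psi^n_{\xi\xi}\|^2\leq K_0$ for a suitable $K_0=K_0(m,M,N_0)$ at level $n$, the same two bounds persist at level $n+1$ once $t\leq t_0$ is small enough. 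The density bound is then read off the characteristic representation: along $\xi(s)=\xi_0-s_-s$,
\begin{equation*}
\frac{d}{ds}(V+\phi^{n+1})(s,\xi(s))=-s_-V_\xi(\xi(s))+\psi^n_\xi(s,\xi(s)),
\end{equation*}
whose right-hand side lies in $L^1(0,t_0;L^\infty_\xi)$ with norm $\lesssim t_0+t_0^{1/2}\|\psi^n_\xi\|_{L^2(0,t_0;H^1)}$, small for $t_0$ small; and the ``initial value'' of $V+\phi^{n+1}$ along each characteristic is either $(V+\phi_0)(\xi_0)\in[m,M]$ or $(V+\phi^{n+1})(t_1,0)=v_-=(V+\phi_0)(0)\in[m,M]$ (using $\phi_0(0)=0$ since $\phi_0\in H_0^1$). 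Hence $V+\phi^{n+1}\in[m/2,2M]$ and the whole sequence lives in $X_{m/2,2M}(0,t_0)$ with uniform bound $2N_0$.

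With uniform bounds in hand, convergence follows by estimating the differences $(\phi^{n+1}-\phi^n,\psi^{n+1}-\psi^n)$ in the lower-order norm $C([0,t_0];L^2)$ (plus $L^2(0,t_0;H^1)$ for the $\psi$-component): the same energy computation, now using the uniform $X_{m/2,2M}$-bounds and uniform parabolicity to bound the nonlinear increments, shows the iteration is a contraction after a further, harmless shrinking of $t_0$. The limit $(\phi,\psi)$ lies in $X_{m/2,2M}(0,t_0)$, solves \eqref{BLperturbation}, and inherits \eqref{11}; uniqueness in this class is obtained from the same difference estimate applied to two solutions. Finally \eqref{Equality} is immediate: $\phi(t,0)\equiv0$ forces $\phi_t(t,0)=0$, and evaluating \eqref{BLperturbation}$_1$ at $\xi=0$ gives $-s_-\phi_\xi(t,0)-\psi_\xi(t,0)=0$; the traces are legitimate since $\psi_\xi\in L^2(0,t_0;H^1)$ and, by the transport equation together with $\phi_0(0)=0$, $\phi_\xi$ too has a trace at $\xi=0$.

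I expect the main obstacle to be the estimate of the second paragraph: one has to \emph{simultaneously} keep the $H^1$-energy below $2N_0$ and keep $V+\phi$ trapped in $[m/2,2M]$, the latter being exactly what makes the $\psi$-equation uniformly parabolic throughout the iteration, and it is this coupled demand that fixes the dependence $t_0=t_0(m,M,N_0)$. Within that estimate the only non-automatic step is the bookkeeping of the $\xi=0$ boundary terms: the troublesome quantity $\big(\phi^{n+1}_\xi(t,0)\big)^2$ is tamed because $\phi^{n+1}(t,0)=0$ forces, via \eqref{BLperturbation}$_1$, the relation $s_-\phi^{n+1}_\xi(t,0)+\psi^n_\xi(t,0)=0$, which converts it into a trace of $\psi^n_\xi$ absorbable by the parabolic dissipation.
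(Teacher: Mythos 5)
The paper does not actually prove this lemma---it is quoted from Matsumura--Nishihara \cite{M-N2001}---and your successive-approximation scheme (explicit transport of $\phi^{n+1}$ along the characteristics $\xi=\xi_0-s_-t$ entering through $\xi=0$, a uniformly parabolic linear equation for $\psi^{n+1}$ with zero Dirichlet data, $H^1$ energy bounds that fix $t_0=t_0(m,M,\|(\phi_0,\psi_0)\|_1)$ while trapping $V+\phi$ in $[m/2,2M]$, then contraction in the lower norm) is precisely the standard local-existence argument behind the cited result, and I see no gap in it. The only point to phrase carefully is the identity \eqref{Equality}: since $\phi_\xi(t,\cdot)$ is only $L^2$ in $\xi$, the relation $s_-\phi_\xi(t,0)+\psi_\xi(t,0)=0$ should be read for a.e.\ $t$ (obtained, as you indicate, from the characteristic representation together with the trace of $\psi_\xi\in L^2(0,t_0;H^1)$), which is all that is needed in the subsequent energy estimates.
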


Under the above preparation, we give the following stability result of the boundary layer solution
$(V,U)(\xi)$ which is increasing.
\begin{Theorem}
Assume that $(v_-,u_-)\in \Omega_{sub}$ and $(v_+,u_+)\in BL_{+}(v_-,u_-)$.
Let $({\phi}_0,{\psi}_0)\in H_0^1(\mathbb{R}_{+})$ satisfy
\begin{equation} \label{ic1}
  {\|(\phi_0,\psi_0)\|}\leq C \epsilon^{\alpha},\ \|(\phi_{0\xi},\psi_{0\xi})\|\leq C \left(\epsilon^{-\beta}+1\right),\
  \textrm{and}\
  C^{-1}\epsilon^{l}\leq V+\phi_0\leq C\epsilon^{-l},
\end{equation}
where $C$ is a positive constant independent of $\epsilon$.
If the indices $l\geq 0,\ \alpha\ \textrm{and}\ \beta$ satisfy
\begin{equation} \label{indexBL+}
  \begin{cases}
    \alpha>\frac{\gamma+2}{2}l,\quad
    \alpha+\beta>(\gamma+1)l,\quad \beta\geq \frac{\gamma-1}{2}l,\\[2mm]
    \alpha-\beta\leq \frac{\gamma+3}{2}l,\\[2mm]
        \beta-\alpha< \min\left \{ \frac{2(\gamma-1)}{\gamma+1}\alpha-\frac{3\gamma^2+4\gamma+1}{2\gamma+2}l,
    \frac{3(\gamma-1)}{\gamma^2+1}\alpha-\frac{\gamma^2+6\gamma+1}{2(\gamma^2+1)}\gamma l\right\},\\[2mm]
    \beta-\alpha<\min\left\{\alpha-(\gamma+2)l, \frac{2\alpha}{\gamma}-\frac{\gamma^2+5\gamma+2}{2\gamma}l\right\},
  \end{cases}
\end{equation}
then there exists a suitably small $\epsilon_0>0$ such that if $\epsilon\leq\epsilon_0$,
(\ref{LagrangeI}) has a unique solution $(v,u)$ satisfying
$
  (v-V,u-U)\in C([0,\infty);H_0^1),
$
where $(V,U)$ is defined by (\ref{BL}).

Furthermore, it holds
\begin{equation} \label{largetime}
  \sup_{x\geq s_-t}|(v,u)(t,x)-(V,U)(x-s_-t)|
  \to 0, \ \textrm{as}\ t\to\infty.
\end{equation}
\end{Theorem}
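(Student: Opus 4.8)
The plan is to prove Theorem~1 by the continuation argument: one combines the local existence of Lemma~\ref{locals} with \emph{a priori} estimates that are uniform in the length $T$ of the time interval. Thus, assume $(\phi,\psi)\in X_{m_1,M_1}(0,T)$ solves \eqref{BLperturbation} on $[0,T]$, and put
$$
N(T):=\sup_{0\le t\le T}\|(\phi,\psi)(t)\|_1 ,\qquad m_1\le (V+\phi)(t,\xi)\le M_1\quad\text{on }[0,T]\times\mathbb{R}_+ ,
$$
the a priori bounds imposed on $N(T)$, $1/m_1$, $M_1$ being prescribed as suitable powers of $\epsilon$. The goal is to show that, for $\epsilon\le\epsilon_0$, each of these three quantities is \emph{strictly improved} by the estimates below; by Lemma~\ref{locals} this extends the solution to $[0,\infty)$ with $(v-V,u-U)\in C([0,\infty);H_0^1)$ and with $v$ confined to a fixed compact subinterval of $(0,\infty)$, after which \eqref{largetime} follows by a routine argument.

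The first ingredient is the basic energy estimate. Let $\Phi(v,V):=\int_V^v\big(p(V)-p(s)\big)\,ds\ge0$; on the a priori $v$-range it is comparable to $\phi^2$ near $v=v_+$, and grows like $v$ as $v\to\infty$ and like $v^{1-\gamma}$ (or $-\log v$ when $\gamma=1$) as $v\to0^+$. Multiplying \eqref{BLperturbation}$_2$ by $\psi$, combining with \eqref{BLperturbation}$_1$ so that the pressure terms assemble into $\partial_t\Phi(v,V)$, and integrating over $[0,t]\times\mathbb{R}_+$, one checks that all convective, pressure and viscous boundary contributions at $\xi=0$ vanish because $(\phi,\psi)(t,0)=(0,0)$; the identity \eqref{Equality} is reserved for the $\xi=0$ terms produced by the differentiated estimates. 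Using the monotonicity and decay of $(V,U)$ from Lemma~2.1, \eqref{BLdecay}, \eqref{BLdecay1} (the $O(\delta)$ cross-terms with the boundary layer being absorbed by its monotonicity), this gives
$$
\|(\phi,\psi)(t)\|^2+\int_0^t\!\!\int_{\mathbb{R}_+}\Big(\frac{\mu\psi_\xi^2}{v}+|U_\xi|\,\phi^2\Big)\,d\xi\,ds\ \le\ C(m_1,M_1)\,\|(\phi_0,\psi_0)\|^2 .
$$
Next, exploiting the structure of \eqref{BLperturbation}$_2$ via an effective velocity of the form $\psi+\mu\,\partial_\xi(\log v-\log V)$ (equivalently, testing against a suitable multiple of $\phi_\xi/v$ and eliminating $\psi_\xi$ through \eqref{BLperturbation}$_1$), and treating the $\xi=0$ terms with \eqref{Equality}, one obtains a weighted first-order bound of the form
$$
\sup_{0\le t\le T}\int_{\mathbb{R}_+}\frac{\phi_\xi^2}{v^{\gamma+1}}\,d\xi+\int_0^t\!\!\int_{\mathbb{R}_+}(\cdots)\,d\xi\,ds\ \le\ C(m_1,M_1)\big(\|(\phi_0,\psi_0)\|_1^2+1\big) ,
$$
possibly complemented by an analogous estimate with a weight adapted to large $v$.

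The crux is to convert these integral estimates into pointwise lower and upper bounds for $v$ that are independent of $m_1$, $M_1$, $T$; this is where Kanel's technique enters. One constructs Kanel-type functions $\mathcal{G}_-$, $\mathcal{G}_+$ with $\mathcal{G}_-(v)\to-\infty$ as $v\to0^+$, $\mathcal{G}_+(v)\to+\infty$ as $v\to+\infty$, chosen so that $v^{\gamma+1}\big(\mathcal{G}'_\pm(v)\big)^2$ is dominated by $\Phi(v,V)$ up to terms of order $|\phi|^2$ (e.g. $\mathcal{G}'_\pm(v)\sim v^{-(\gamma+1)/2}\sqrt{\Phi(v,v_+)}$, with the weight matched to the first-order estimate). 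Since $v(t,0)=V(0)=v_-$ and $\phi(t,\cdot)\in H^1$, for every $\xi$ one has $\mathcal{G}_\pm(v)(t,\xi)-\mathcal{G}_\pm(V)(\xi)=\int_0^\xi\partial_\eta\big(\mathcal{G}_\pm(v)-\mathcal{G}_\pm(V)\big)\,d\eta$; writing $\partial_\xi\mathcal{G}_\pm(v)=\mathcal{G}'_\pm(v)(\phi_\xi+V_\xi)$ and applying the Cauchy--Schwarz inequality with the weight $v^{\gamma+1}$ built in, one bounds $\big|\mathcal{G}_\pm(v)-\mathcal{G}_\pm(V)\big|^2$ by the product of the basic-energy quantity $\int\Phi(v,V)\,d\xi$ and the weighted first-order quantity $\int v^{-(\gamma+1)}\phi_\xi^2\,d\xi$, plus lower-order terms absorbed through $\|\phi\|\,\|V_\xi\|$. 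Substituting the two displayed bounds and \emph{tracking the powers of $\epsilon$} provided by \eqref{ic1}, the system of inequalities \eqref{indexBL+} is exactly what forces the resulting bound on $\|\mathcal{G}_\pm(v)-\mathcal{G}_\pm(V)\|_{L^\infty}$ — and hence on $\sup v$ and $\sup(1/v)$ — to fall \emph{strictly} below $M_1$ and $1/m_1$ once $\epsilon\le\epsilon_0$. I expect this to be the main obstacle: one must close a bootstrap whose energy constants degenerate as $m_1\to0$ and $M_1\to\infty$, so the weights in the first-order estimate, the choice of $\mathcal{G}_\pm$, and the growth rates of $\Phi$ all have to be matched, with the hierarchy \eqref{indexBL+} serving as the bookkeeping that keeps the loop consistent.

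Once $v$ is trapped in a fixed interval $[m,M]\subset(0,\infty)$ independent of $T$, all the constants above become absolute. A standard differentiated estimate — applying $\partial_\xi$ to \eqref{BLperturbation}, testing against $(\phi_\xi,\psi_\xi)$, using \eqref{Equality} for the $\xi=0$ terms, and closing with the basic estimate — then bounds $N(T)$ in terms of the data and $\epsilon$, and yields $\phi_\xi\in L^2(0,\infty;L^2)$, $\psi_\xi\in L^2(0,\infty;H^1)$. The continuation argument now produces the global solution asserted in Theorem~1. Finally, for \eqref{largetime}, the time-integrability of $\|\phi_\xi\|$ and $\|\psi_\xi\|$, together with the uniform $L^\infty_t$ control of $\tfrac{d}{dt}\|\phi_\xi\|^2$ and $\tfrac{d}{dt}\|\psi_\xi\|^2$ read off from the same estimates, forces $\|(\phi_\xi,\psi_\xi)(t)\|\to0$ as $t\to\infty$; combined with $\|(\phi,\psi)(t)\|_{L^\infty}^2\le 2\|(\phi,\psi)(t)\|\,\|(\phi_\xi,\psi_\xi)(t)\|$ and the uniform bound on $\|(\phi,\psi)(t)\|$, this gives decay of $(\phi,\psi)$ in $L^\infty$, which transfers back to $(v,u)$ and the original variable $x$ to yield \eqref{largetime}.
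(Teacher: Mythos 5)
Your overall architecture coincides with the paper's: a basic energy estimate exploiting the monotonicity of the increasing layer, a first-order estimate for $\partial_\xi\log(v/V)$, Kanel's technique to convert the product of these two into pointwise upper and lower bounds on $v$, power-of-$\epsilon$ bookkeeping through \eqref{indexBL+}, and a continuation argument. However, there is a genuine gap at the first-order step. You claim a weighted bound on $\sup_t\int\phi_\xi^2/v^{\gamma+1}\,d\xi$ with constant $C(m_1,M_1)$ after "treating the $\xi=0$ terms with \eqref{Equality}", but \eqref{Equality} only converts $\phi_\xi(\tau,0)$ into $\psi_\xi(\tau,0)$; the boundary contribution $\int_0^t\psi_\xi^2(\tau,0)\,\mathrm{d}\tau$ remains (this is exactly the second "main difficulty" of the inflow problem, cf.\ \eqref{badterm}), and it cannot be absorbed at the first-order level. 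The paper controls it by the trace/interpolation inequality $\int_0^t\psi_\xi^2(\tau,0)\,\mathrm{d}\tau\le a\int_0^t\|\psi_\xi/\sqrt v\|^2+4M^2a^{-1}\int_0^t\|\psi_{\xi\xi}/\sqrt v\|^2$, which forces the second-order ($\psi_{\xi\xi}$) energy estimate to be run \emph{before} and \emph{coupled with} the first-order one; the loop is closed by choosing $a$ as a negative power of $\epsilon$ and imposing the smallness condition \eqref{conditionBL+} (plus the control of $\|\phi_\xi/v\|^4\|\psi_\xi/\sqrt v\|^2$ via a further bootstrap), and a nontrivial portion of the constraints \eqref{indexBL+} exists precisely to make this coupling consistent. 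In your plan the differentiated estimate is deferred until after the Kanel step, so the first-order bound you feed into Kanel's inequality is not actually available as stated; you would need to restructure the argument into the coupled system (basic estimate) $\to$ (first-order with boundary term) $\leftrightarrow$ (second-order) $\to$ (Kanel), as the paper does in Lemmas 3.1--3.3.

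Two smaller points: the absorption of the cross term $-\mu U_\xi\phi\psi_\xi/(vV)$ in the basic estimate does not rely on smallness of the layer strength $\delta$ (your phrase "$O(\delta)$ cross-terms absorbed by monotonicity" suggests it does); for the increasing layer it follows from the discriminant computation \eqref{D} using $0\le\mu U_\xi\le V^{1-\gamma}$ and $Vvf(v,V)\ge\gamma V^{-\gamma}$, valid for arbitrary strength. Also, the basic estimate must be kept in the relative-entropy form $\|(\sqrt{\Phi},\psi)\|^2$ with a constant independent of $m_1,M_1$; replacing $\Phi$ by $\phi^2$ and writing $C(m_1,M_1)$ already consumes powers of $m_1,M_1\sim\epsilon^{\mp}$ that the final bookkeeping in \eqref{indexBL+} cannot afford unless tracked exactly as in \eqref{Phiphi}.
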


\begin{Remark} Several remarks concerning Theorem 1 are listed below:
\begin{itemize}
\item Here in Theorem 1 the strength of the boundary layer solution is not assumed to be small and thus we can show the nonlinear stability of strong increasing boundary layer solution.
 Since $l=0$ and $0<\alpha\leq \beta<\alpha+\min\left\{1,\frac{2}{\gamma},
 \frac{2(\gamma-1)}{\gamma+1},\frac{3(\gamma-1)}{\gamma^2+1}\right\}\alpha$ imply (\ref{indexBL+}) and in this case the oscillation of the initial density can be large.
\item It is easy to construct some initial perturbation $(\phi_0(x),\psi_0(x))$ satisfying the conditions listed in Theorem 1. In fact for each function $(f(x), g(x)\in H^1(\mathbb{R}_+)$ and each $\alpha, \beta$ satisfying the conditions listed in Theorem 1, if we set
$$
\phi(x)=\epsilon^{\frac{\alpha+\beta}{2}}f\left(\epsilon^{\beta-\alpha}x\right),\quad
\psi(x)=\epsilon^{\frac{\alpha+\beta}{2}}g\left(\epsilon^{\beta-\alpha}x\right),
$$
one can verify that such a $(\phi(x),\psi(x))$ satisfies all the condiitons listed in Theorem 1.
\end{itemize}
\end{Remark}

For the case when the boundary layer solution is decreasing, we have
\begin{Theorem}
Assume that $(v_-,u_-)\in\Omega_{sub}$ and $(v_+,u_+)\in BL_-(v_-,u_-)$.
Let $({\phi}_0,{\psi}_0)\in H_0^1(\mathbb{R}_{+})$ satisfy
\begin{equation}\label{ic2}
  {\|(\phi_0,\psi_0)\|}\leq C \delta^{\alpha},\ \|(\phi_{0\xi},\psi_{0\xi})\|\leq C (\delta^{-\beta}+1)\
  \textrm{and}\
  C^{-1}\delta^{l}\leq V+\phi_0\leq C\delta^{-l},
\end{equation}
where $C$ is a positive constant independent of $\delta$.
If the indices $l\geq 0,\ \alpha\ \textrm{and}\ \beta$ satisfy
\begin{equation} \label{indexBL-}
  \begin{cases}
       \alpha>\frac{\gamma+2}{2}l,\quad
       \gamma l<\alpha+\beta<\frac{1}{2}-\frac{\gamma+5}{2}{l},\\[2mm]
       \alpha-\beta\leq \frac{\gamma+3}{2}l,\\[2mm]
       \beta-\alpha<
       \min\left\{\alpha-(\gamma+2)l,
       \frac{(\gamma-1)(1-4\alpha)}{2\gamma^2+6\gamma-2}
       -\frac{\gamma^3+4\gamma^2+8\gamma-1}{2\gamma^2+6\gamma-2}l\right\}
  \end{cases}
\end{equation}
then there exists $\delta_0>0$ suitably small such that if $\delta\leq\delta_0$,
(\ref{LagrangeI}) has a unique solution $(v,u)$ satisfying
$  (v-V,u-U)\in C([0,\infty);H_0^1) $
and (\ref{largetime}),
where $(V,U)$ is defined by (\ref{BL}).
\end{Theorem}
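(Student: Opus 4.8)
The plan is to establish Theorem 2 by a continuation argument, exactly as for Theorem 1, so everything reduces to closing a set of \emph{a priori} estimates. Suppose $(\phi,\psi)\in X_{m,M}(0,T)$ solves (\ref{BLperturbation}) on $[0,T]$, write $v:=V+\phi$, and set $N(T):=\sup_{0\le t\le T}\|(\phi,\psi)(t)\|_1$, $m:=\inf_{[0,T]\times\mathbb{R}_+}v$, $M:=\sup_{[0,T]\times\mathbb{R}_+}v$. The goal is to bound $N(T)$, $M$ and $m^{-1}$ in terms of the fixed constant $C$ of (\ref{ic2}) and of powers of $\delta$ that are \emph{strictly better} than any a priori assumed bounds of the same form; together with Lemma \ref{locals} this propagates the local solution to $[0,\infty)$. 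Note first that $(v_-,u_-)\in\Omega_{sub}$ forces $v_+<v_-<v_*$, so only the exponentially decaying branch of (\ref{BLdecay})--(\ref{BLdecay1}) is in force; thus $|(V_\xi,V_{\xi\xi},U_\xi,U_{\xi\xi})|\le C\delta e^{-c\xi}$ throughout, which makes every error term produced by the boundary layer exponentially localized and carrying a power of $\delta$.

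\emph{Step 1 (basic energy estimate).} Let $\Phi(v,V):=\int_V^v\bigl(p(V)-p(\eta)\bigr)\,d\eta=\int_V^v(v-\eta)|p'(\eta)|\,d\eta\ge0$; it is comparable to $|\phi|^2$ when $v$ stays in a fixed compact subinterval of $(0,\infty)$, but degenerates like $v^{1-\gamma}$ as $v\to0^+$ and like $v$ as $v\to+\infty$. Multiplying the momentum equation in (\ref{BLperturbation}) by $\psi$ and the continuity equation by $p(V)-p(v)$, adding, and integrating over $[0,t]\times\mathbb{R}_+$, all boundary contributions at $\xi=0$ vanish because $(\phi,\psi)(t,0)=(0,0)$ and (\ref{Equality}) holds, whence
\[
\|\psi(t)\|^2+\int_{\mathbb{R}_+}\Phi(v,V)\,d\xi+\mu\int_0^t\!\!\int_{\mathbb{R}_+}\frac{\psi_\xi^2}{v}\,d\xi d\tau\ \le\ C\|\psi_0\|^2+C\int_{\mathbb{R}_+}\Phi(v_0,V)\,d\xi+\mathcal{E},
\]
where $\mathcal{E}$ gathers the boundary-layer error terms. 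Using $|U_\xi|\le C\delta e^{-c\xi}$, the dissipation $\mu\int\psi_\xi^2/v$, and the interpolation $\|f\|_{L^\infty}^2\le 2\|f\|\,\|f_\xi\|$ (valid since $f(t,0)=0$), $\mathcal{E}$ is absorbed, while the first two terms on the right are estimated through (\ref{ic2}) by $C\delta^{2\alpha-O(l)}$; this yields
\[
\sup_{0\le t\le T}\Bigl(\|\psi(t)\|^2+\int_{\mathbb{R}_+}\Phi(v,V)\,d\xi\Bigr)+\int_0^T\!\!\int_{\mathbb{R}_+}\frac{\psi_\xi^2}{v}\,d\xi d\tau\ \le\ C\delta^{2\alpha-O(l)}.
\]

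\emph{Step 2 (density bounds via Kanel's technique, and the first-order estimate).} Fix $t$. Since $\phi(t,0)=0$ we have $v(t,0)=v_-$, so for a Kanel-type function $\mathcal{K}(v)$ built from $\Phi$ — with $\bigl(\mathcal{K}'(v)\bigr)^2 v^2\sim\Phi(v,V)$, hence $|\mathcal{K}(v)|$ comparable to $v^{1/2}$ as $v\to+\infty$ and to $v^{-(\gamma-1)/2}$ as $v\to0^+$ — writing $\mathcal{K}(v)(t,\xi)=\int_0^\xi\partial_\eta\mathcal{K}(v)\,d\eta$ and using the Cauchy--Schwarz inequality gives
\[
\sup_{\xi\ge0}\bigl|\mathcal{K}(v)(t,\xi)\bigr|^2\ \le\ C\Bigl(\int_{\mathbb{R}_+}\Phi(v,V)\,d\xi\Bigr)\Bigl(\int_{\mathbb{R}_+}\frac{\phi_\xi^2}{v^2}\,d\xi\Bigr)+C\,\frac{\delta^2}{m^2}\int_{\mathbb{R}_+}\Phi(v,V)\,d\xi.
\]
If both factors on the right are controlled and their product is a positive power of $\delta$, this forces $m\ge\tfrac12 C^{-1}\delta^l$ and $M\le 2C\delta^{-l}$, i.e. an improvement of the a priori density bounds. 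To control $\sup_t\int v^{-2}\phi_\xi^2\,d\xi$ one carries out a first-order estimate: using $v_t-s_-v_\xi=u_\xi$ to rewrite the flux $u_\xi/v$ as $(\ln v)_t-s_-(\ln v)_\xi$, the momentum equation becomes an evolution equation for $\chi:=v_\xi/v-V_\xi/V$; an energy estimate for it — the $\xi=0$ boundary terms again killed via $(\phi,\psi)(t,0)=0$ and (\ref{Equality}) — combined with Step 1 and the bound $\|(\phi_{0\xi},\psi_{0\xi})\|\le C(\delta^{-\beta}+1)$, yields
\[
\sup_{0\le t\le T}\int_{\mathbb{R}_+}\frac{\phi_\xi^2}{v^2}\,d\xi+\int_0^T\!\!\int_{\mathbb{R}_+}\frac{|p'(v)|}{v}\,\phi_\xi^2\,d\xi d\tau\ \le\ C\delta^{-2\beta-O(l)}+C\delta^{2\alpha-O(l)}.
\]
A routine $H^1$ estimate for $\psi$ (multiply the momentum equation by $-\psi_{\xi\xi}$, the boundary term $\mu\psi_\xi(t,0)\psi_t(t,0)$ handled through (\ref{Equality})) then closes $N(T)^2\le C(\delta^{-2\beta}+\delta^{2\alpha})\delta^{-O(l)}$.

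\emph{Step 3 (closing the continuation, large-time behavior, and the main difficulty).} The index conditions (\ref{indexBL-}) are precisely what makes the $\delta$-exponents above mutually consistent: $\alpha>\tfrac{\gamma+2}{2}l$ and $\gamma l<\alpha+\beta$ make the basic energy $\delta^{2\alpha-O(l)}$ and the Kanel product $\delta^{2\alpha-O(l)}\cdot\delta^{-2\beta-O(l)}$ positive powers of $\delta$ — hence small for $\delta\le\delta_0$ — so that Kanel's inequality strictly improves the density bounds; the upper bound $\alpha+\beta<\tfrac12-\tfrac{\gamma+5}{2}l$ forces the boundary-layer error terms (which scale like $\delta^{\,1-2(\alpha+\beta)-(\gamma+5)l}$) to be small; and $\alpha-\beta\le\tfrac{\gamma+3}{2}l$ together with the last displayed inequality of (\ref{indexBL-}) disposes of the remaining cross terms in the first-order and Kanel estimates. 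Once $m^{-1}$, $M$ and $N(T)$ are bounded uniformly in $T$, the continuation argument produces the global solution $(v-V,u-U)\in C([0,\infty);H_0^1)$; and (\ref{largetime}) follows in the usual way, since the estimates give $\int_0^\infty\bigl(\|(\phi_\xi,\psi_\xi)(t)\|^2+\bigl|\tfrac{d}{dt}\|(\phi_\xi,\psi_\xi)(t)\|^2\bigr|\bigr)dt<\infty$, hence $\|(\phi_\xi,\psi_\xi)(t)\|\to0$, and therefore $\sup_{\xi\ge0}|(\phi,\psi)(t,\xi)|\le C\|(\phi,\psi)(t)\|^{1/2}\|(\phi_\xi,\psi_\xi)(t)\|^{1/2}\to0$. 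The main obstacle is the interplay inside Step 2: the first-order estimate unavoidably introduces negative powers of $v$ (equivalently of $\delta$) through the weight $v^{-2}$, and one must keep precise track of them so that the product entering Kanel's inequality remains a positive power of $\delta$. This bookkeeping dictates the (more restrictive than (\ref{indexBL+})) set (\ref{indexBL-}); the extra sharpness — in particular the explicit threshold $\tfrac12$ — reflects that for the \emph{decreasing} boundary layer the sign $V_\xi<0$ does not assist those cross terms the way $V_\xi>0$ does in Theorem 1.
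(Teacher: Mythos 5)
Your skeleton (basic energy estimate, a first-order estimate for $\tilde v_\xi/\tilde v$, Kanel's technique for pointwise bounds on $v$, continuation) is the same as the paper's, but two steps as you state them would fail, and both are exactly the points where the decreasing-layer/inflow case is genuinely harder. First, in Step 1 the bad term for $U_\xi<0$ is $|U_{\xi}|\,(p(v)-p(V)-p'(V)\phi)\sim \delta e^{-c\xi}\,m^{\gamma+2}\phi^{2}$: it involves $\phi$, not $\psi_\xi$, so it cannot be ``absorbed'' by the dissipation $\mu\int\psi_{\xi}^{2}/v$ together with an interpolation inequality. The paper controls it with the Nikkuni--Kawashima device $\phi(t,\xi)\le \xi^{1/2}\|\phi_\xi(t)\|$ (using $\phi(t,0)=0$ and the weight $e^{-c\xi}$), which produces $\delta m^{\gamma+2}\int_0^t\|\phi_\xi\|^2d\tau$; this in turn is only controlled by the dissipation $\int_0^t\int \tilde v_\xi^2/(v^{\gamma}\tilde v^{2})$ coming from the first-order estimate, at the price of a factor $M^{\gamma+2}$. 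Hence the basic estimate is \emph{not} self-contained: it closes only jointly with the first-order estimate under the smallness condition $a\,\delta m^{\gamma+2}M^{\gamma+2}\le c_1$ (the paper's (\ref{condition1BL-})), and keeping the explicit powers of $m,M$ (not an ``$O(l)$'') is what generates the restriction $\alpha+\beta<\frac12-\frac{\gamma+5}{2}l$. Your Step 3 scaling $\delta^{1-2(\alpha+\beta)-(\gamma+5)l}$ shows you sense this, but the mechanism you give in Step 1 is not the one that works.

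Second, in your first-order estimate the $\xi=0$ boundary terms are \emph{not} killed by $(\phi,\psi)(t,0)=(0,0)$ and (\ref{Equality}). Integrating the flux of the $\tilde v_\xi/\tilde v$ identity leaves the term $\int_0^t(\tilde v_\xi/\tilde v)^2(\tau,0)\,d\tau=u_-^{-2}\int_0^t\psi_\xi^2(\tau,0)\,d\tau$ on the bad side (the coefficient is $-\mu s_-/2>0$ since $s_-<0$); (\ref{Equality}) merely converts $\phi_\xi(\tau,0)$ into $\psi_\xi(\tau,0)$, it does not make it vanish. This is one of the two main difficulties of the inflow problem identified in the paper, and its treatment dictates the architecture of the proof: one interpolates $\psi_\xi^2(\tau,0)\le 2M\|\psi_\xi/\sqrt v\|\,\|\psi_{\xi\xi}/\sqrt v\|$ with a large parameter $a$, which couples the first-order estimate to the second-order ($\psi_{\xi\xi}$) estimate; the latter produces a cubic term $\int_0^t\|\phi_\xi/v\|^4\|\psi_\xi/\sqrt v\|^2d\tau$ that is closed by Strauss's lemma under the conditions $g_2,g_3\le c$ — none of which appears in your outline, and without which the estimate chain does not close. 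Finally, your claim that Kanel's inequality restores $m\gtrsim\delta^{l}$, $M\lesssim\delta^{-l}$ is not justified: the product $\|\sqrt{\tilde\Phi}\|\,\|\tilde v_\xi/\tilde v\|$ is only of order $\delta^{\theta}$ with $\theta=\alpha-\beta-\frac{\gamma+3}{2}l\le 0$, so the bounds one obtains are the weaker $\delta^{2\theta/(1-\gamma)}\lesssim v\lesssim\delta^{2\theta}$; the paper's continuation accepts this degradation and re-verifies the smallness conditions with the new $m_1,M_1$, which is precisely what the third line of (\ref{indexBL-}) is designed to guarantee.
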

\begin{Remark}
   $l=0$, $\alpha+\beta<\frac{1}{2}$ and
   $0<\alpha\leq \beta<\alpha+\min\left\{\alpha,\frac{(\gamma-1)(1-4\alpha)}{2\gamma^2+6\gamma-2}\right\}$
   imply (\ref{indexBL-}) and also in such a case the oscillation of the initial density can be large.
\end{Remark}

\subsection{Rarefaction wave}
We only consider the case when $(v_-,u_-)\in\Omega_{super}$ and $(v_+,u_+)\in R_1(v_-,u_-)$
because the study of $(v_+,u_+)\in R_2(v_-,u_-)$ is similar to that of $(v_+,u_+)\in R_1(v_-,u_-)$.
Then the solution to the inflow problem (\ref{LagrangeI}) is expected to tend to
the 1-rarefaction wave connecting $(v_-,u_-)$ and $(v_+,u_+)$.

Since the rarefaction wave is only Lipschitz continuous,
we shall construct a smooth approximation for the rarefaction wave as follows.
First consider the Riemann problem for Burgers' equation:
\begin{equation} \label{BurgerInviscid}
  \begin{cases}
    w_t+ww_x=0,\\
    w(0,x)=w_0^R(x)=
    \begin{cases}
      w_-,\quad x<0,\\
      w_+,\quad x>0,
    \end{cases}
  \end{cases}
\end{equation}
where $w_{\pm}=\lambda_1(v_{\pm})$.
It is obvious that  $w_-<w_+$.
Then it is well known that
(\ref{BurgerInviscid}) has a continuous weak solution $w^r(x/t)$ given by
\begin{equation}
  w^r({x}/{t})=
  \begin{cases}
  w_-,\quad x<w_-t,\\
  {x}/{t},\quad w_-t\leq x \leq w_+t,\\
  w_+,\quad x>w_+t.
  \end{cases}
\end{equation}

Define $(v^r,u^r)(x/t)$ by
\begin{equation} \label{rarefaction}
\begin{cases}
   v^r=\lambda_1^{-1}(w^r),\\
    u^{r}=u_--\int_{v_-}^{v^r}\lambda_1(s)\mathrm{d}s.
\end{cases}
\end{equation}
Then by a simple calculation,
$(v^r,u^r)(x/t)$ satisfies the following Riemann problem of Euler equations, i.e.,
\begin{equation} \label{RP}
  \begin{cases}
    v_t-u_x=0,  \\
    u_t+p(v)_x=0, \\
    (v,u)(0,x)=
    \begin{cases}
      (v_-,u_-),\quad x<0,\\
      (v_+,u_+),\quad x>0.
    \end{cases}
  \end{cases}
\end{equation}

To construct the smooth approximate rarefaction wave $(\tilde{V},\tilde{U})(t,x)$,
we consider the following Cauchy problem for Burgers' equation:
\begin{equation} \label{Burger}
  \begin{cases}
    w_t+ww_x=0,\\
    w(0,x)=w_0(x)=
    \begin{cases}
      w_-,\quad x<0,\\
      w_-+C_q \delta_r \int_0^{{\epsilon}x}y^q e^{-y}\mathrm{d}y,\quad x\geq 0,
    \end{cases}
  \end{cases}
\end{equation}
where $\delta_r=w_+-w_-$, $q \geq 10$ is some constant, $C_q$ is a constant such that
 $C_q\delta_r\int_0^{\infty}y^q e^{-y}\mathrm{d}y = 1$,
$\epsilon \leq 1$ is a positive constant to be determined later.
Then we have
\begin{Lemma}(See \cite{H-Q})
  The problem (\ref{Burger}) has a unique smooth solution $w(t,x)$ satisfying
  \begin{itemize}
    \item[(i)] $w_-\leq w(t,x)<w_+,\ w_x\geq 0$;
    \item[(ii)] for each $1\leq p\leq \infty$, there exists a constant $C$, depending only on
    $p$ and $q$ such that for $t\geq 0$,
    \begin{equation*}
      \begin{aligned}
        \|w_x(t)\|_{L^p}&\leq C\min\left\{
        \delta_r{\epsilon}^{1-1/p},\delta_r^{1/p}t^{-1+1/p}\right\},\\
        \|w_{xx}(t)\|_{L^p}&\leq C\min\left\{\delta_r{\epsilon}^{2-1/p},
        \delta_r^{1/q}t^{-1+1/q}\right\};
      \end{aligned}
    \end{equation*}
    \item[(iii)] when $x\leq w_-t$, $\partial_x^k(w(t,x)-w_-)=0$ for $k=0,1,2$;
    \item[(iv)] $\sup_{x\in\mathbb{R}}|w(1+t,x)-w^r(x/t)|\to 0,$ as $t\to\infty$.
  \end{itemize}
\end{Lemma}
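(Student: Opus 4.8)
\medskip
\noindent\emph{A proof proposal.}
Since $w_0$ is nondecreasing and of class $C^q$ (in particular $C^2$, which is all that the later analysis requires), the natural approach is the method of characteristics. The characteristic map $X(t,x_0):=x_0+w_0(x_0)t$ satisfies $\partial_{x_0}X=1+w_0'(x_0)t\ge 1$, so for each fixed $t\ge 0$ it is an increasing $C^q$-diffeomorphism of $\mathbb{R}$ onto itself; writing $x_0=x_0(t,x)$ for its inverse, one checks directly that $w(t,x):=w_0\big(x_0(t,x)\big)$ is the unique classical solution of (\ref{Burger}). Differentiating the relations $w=w_0(x_0)$ and $x=x_0+w_0(x_0)t$ in $x$ yields the basic identities
\begin{equation*}
  w_x(t,x)=\frac{w_0'(x_0)}{1+w_0'(x_0)t},\qquad
  w_{xx}(t,x)=\frac{w_0''(x_0)}{\big(1+w_0'(x_0)t\big)^{3}}.
\end{equation*}
From these, (i) is immediate ($w$ takes values in the range $[w_-,w_+)$ of $w_0$, and $w_x\ge 0$ since $w_0'\ge 0$), and so is (iii): for $x\le w_-t$ the backward characteristic reaches some $x_0\le 0$ — the characteristic issuing from $x_0=0$ is exactly $x=w_-t$, and those from $x_0>0$ stay strictly to its right — on which $w_0\equiv w_-$, so $w(t,x)\equiv w_-$ with all $x$-derivatives vanishing there.

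For (ii) I would rewrite the $L^p$ norms in the Lagrangian variable $x_0$, using $dx=\big(1+w_0'(x_0)t\big)\,dx_0$ on each time slice:
\begin{equation*}
  \|w_x(t)\|_{L^p}^p=\int_{\mathbb{R}}\frac{(w_0')^p}{(1+w_0't)^{p-1}}\,dx_0,\qquad
  \|w_{xx}(t)\|_{L^p}^p=\int_{\mathbb{R}}\frac{|w_0''|^p}{(1+w_0't)^{3p-1}}\,dx_0.
\end{equation*}
Dropping the denominators ($\ge 1$) and evaluating the resulting integrals — which, via the substitution $y=\epsilon x_0$ together with $w_0'(x_0)\sim\delta_r\epsilon(\epsilon x_0)^q e^{-\epsilon x_0}$ and $w_0''(x_0)\sim\delta_r\epsilon^2(\epsilon x_0)^{q-1}(q-\epsilon x_0)e^{-\epsilon x_0}$, are Gamma integrals — gives $\|w_0'\|_{L^p}\lesssim\delta_r\epsilon^{1-1/p}$ and $\|w_0''\|_{L^p}\lesssim\delta_r\epsilon^{2-1/p}$, i.e. the first alternative in each bound. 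For the time-decay alternative I would use the elementary inequality $a^m/(1+at)^n\le a^{m-n}/t^{n}$, valid for $a,t>0$: with $m=p$, $n=p-1$ it yields $(w_0')^p/(1+w_0't)^{p-1}\le w_0'/t^{p-1}$, hence $\|w_x(t)\|_{L^p}^p\le t^{-(p-1)}\int_{\mathbb{R}}w_0'\,dx_0=\delta_r\,t^{-(p-1)}$, i.e. $\|w_x(t)\|_{L^p}\le\delta_r^{1/p}t^{-1+1/p}$. The bound for $w_{xx}$ is more delicate, since this device would produce a negative power of $w_0'$, inadmissible because $w_0'$ vanishes at $x_0=0$ and at infinity. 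Instead one exploits the high-order flatness of $w_0$ at the corner through an inequality of the form $|w_0''(x_0)|\lesssim_q\delta_r^{1/q}\epsilon^{1+1/q}\big(w_0'(x_0)\big)^{1-1/q}$, a direct consequence of the explicit incomplete-Gamma profile (for which $w_0'\sim(\epsilon x_0)^q$ and $w_0''\sim(\epsilon x_0)^{q-1}$ near $x_0=0$, with exponential decay at infinity). Inserting this, using $(w_0')^{(1-1/q)p}/(1+w_0't)^{3p-1}\le\big(w_0'/(1+w_0't)\big)^{(1-1/q)p}\le\min\{(w_0')^{(1-1/q)p},t^{-(1-1/q)p}\}$, splitting the integral according to whether $w_0'(x_0)t\lessgtr 1$, and invoking the integrability of the relevant powers of the profile, one reaches $\|w_{xx}(t)\|_{L^p}\lesssim_q\delta_r^{1/q}t^{-1+1/q}$; the exponent $1/q$ here is precisely the reciprocal of the order of vanishing of $w_0'$ at $x_0=0$.

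For (iv) I would compare $w(1+t,\cdot)$ with $w^r(\cdot/t)$ slice by slice. Along a characteristic issuing at time $1+t$ one has $x/t=w_0(x_0)+(x_0+w_0(x_0))/t$. Given $\eta>0$, pick $X=X(\eta)$ with $w_0(x_0)>w_+-\eta$ for $x_0>X$ (possible since $w_0(+\infty)=w_+$ by the choice of $C_q$). On the characteristics with $0<x_0\le X$ the correction term is $O((1+X)/t)$, so for $t$ large both $w(1+t,x)=w_0(x_0)$ and $w^r(x/t)=\min\{\max\{x/t,w_-\},w_+\}$ lie within $\eta$ of each other; on those with $x_0>X$ both quantities lie in $[w_+-\eta,w_+]$; and the region $x\le w_-t$, where both equal $w_-$, corresponds to $x_0\le 0$. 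Taking $\sup_x$ and then $\eta\to 0$ gives (iv).

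The main obstacle is clearly the uniform-in-$(\delta_r,\epsilon,t)$ estimates of part (ii), and especially the $w_{xx}$ estimate: because $w_0$ is not compactly supported one cannot rely on a crude pointwise bound but must weave together the elementary $a^m/(1+at)^n$-type inequalities, the precise $(\epsilon x_0)^q e^{-\epsilon x_0}$ decay of the profile, the correct flatness exponent $q$, and careful tracking of all three parameters through the resulting Gamma-type integrals.
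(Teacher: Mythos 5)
The paper itself offers no proof of this lemma (it is simply quoted from \cite{H-Q}), so there is nothing internal to compare with; your reconstruction by characteristics is the standard route, and parts (i), (iii), (iv), the Lagrangian identities for $w_x$, $w_{xx}$, the $\epsilon$-bounds, and the $\delta_r^{1/p}t^{-1+1/p}$ decay of $\|w_x\|_{L^p}$ are all correct as written (including the key inequality $|w_0''|\lesssim_q \delta_r^{1/q}\epsilon^{1+1/q}(w_0')^{1-1/q}$).

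The one genuine gap is in the time-decay estimate for $\|w_{xx}(t)\|_{L^p}$ with $p<\infty$. Setting $\sigma=(1-1/q)p$, your chain bounds $(w_0')^{\sigma}(1+w_0't)^{-(3p-1)}$ by $\min\{(w_0')^{\sigma},t^{-\sigma}\}$, i.e.\ it discards the whole remaining factor $(1+w_0't)^{-(3p-1-\sigma)}$. On the region $\{w_0'(x_0)t\ge 1\}$ you are then left with the constant $t^{-\sigma}$ and no profile factor whose integrability could be invoked; since $w_0'$ decays only exponentially, the measure of that region grows like $\epsilon^{-1}\log(\delta_r\epsilon t)$, so this route gives $\|w_{xx}(t)\|_{L^p}^p\lesssim \delta_r^{p/q}\epsilon^{p+p/q-1}t^{-\sigma}\log(\delta_r\epsilon t)$, which misses the claimed uniform bound $C\delta_r^{p/q}t^{-\sigma}$ by a logarithm. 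The repair stays inside your framework: because $3p-1-\sigma=2p-1+p/q\ge 1$, keep one denominator factor,
\begin{equation*}
  \frac{(w_0')^{\sigma}}{(1+w_0't)^{3p-1}}\le t^{-\sigma}\,\frac{1}{1+w_0't}\qquad\text{on }\{w_0't\ge1\},
\end{equation*}
and check via $y=\epsilon x_0$, $T=C_q\delta_r\epsilon t$, $g(y)=y^qe^{-y}$ that $\int_{\{w_0't\ge1\}}(w_0't)^{-1}\mathrm{d}x_0=\epsilon^{-1}T^{-1}\int_{\{Tg\ge1\}}g^{-1}\mathrm{d}y\lesssim_q\epsilon^{-1}$ uniformly in $t$ (the lower endpoint $y\sim T^{-1/q}$ contributes $O(T^{-1/q})$ and the upper endpoint, where $g^{-1}\sim y^{-q}e^{y}$, contributes $O(1)$ after multiplying by $T^{-1}$). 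Together with the prefactor $\delta_r^{p/q}\epsilon^{p+p/q}$ this region then yields $\delta_r^{p/q}t^{-\sigma}$, as required. A related caveat applies to your region $\{w_0't<1\}$: bare integrability of $(w_0')^{\sigma}$ gives a bound independent of $t$, not the needed $\epsilon^{-1}t^{-\sigma}$; you must use the two-ended structure of $g$ (power-law vanishing $\sim y^{q\sigma}$ at $y=0$, exponential tail at infinity) to get $\int_{\{Tg<1\}}g^{\sigma}\mathrm{d}y\lesssim T^{-\sigma}$, after which that region is fine. With these adjustments the sketch proves the lemma.
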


We recall (\ref{rarefaction}) and so define
the smooth approximation $(\tilde{V},\tilde{U})(t,x)$ to $(v^r,u^r)(x/t)$ by
\begin{equation}
\begin{cases}
    \tilde{V}(t,x)=\lambda_1^{-1}(w(1+t,x)),\\[2mm]
     \tilde{U}(t,x)=u_--\int_{v_-}^{\tilde{V}(t,x)}\lambda_1(s)\mathrm{d}s,
\end{cases}
\end{equation}
which satisfies
\begin{equation}
  \begin{cases}
    \tilde{V}_t-\tilde{U}_x=0,\quad x\in\mathbb{R},\ t>0, \\[2mm]
    \tilde{U}_t+p(\tilde{V})_x=0.
  \end{cases}
\end{equation}
Then,
define
\begin{equation} \label{RWVU}
  (V,U)(t,\xi)=(\tilde{V},\tilde{U})(t,x)|_{x\geq s_-t},\quad \xi=x-s_-t,
\end{equation}
and the following lemma holds.
\begin{Lemma}
  $(V,U)(t,\xi)$ satisfies
  \begin{itemize}
    \item[(i)] $U_{\xi}\geq 0$;
    \item[(ii)] for each $1 \leq p \leq\infty$, there exists a constant $C$, depending only on
    $p$, $q$ and $v_{\pm}$, such that for $t\geq 0$,
    \begin{equation}\label{Uxip}
    \|(V_{\xi},U_{\xi})(t)\|_{L^p}\leq C\min\left\{
        {\epsilon}^{1-1/p},(1+t)^{-1+1/p}\right\},
        \end{equation}
    \begin{equation} \label{Uxixip}
      \begin{aligned}
        \|(V_{\xi\xi},U_{\xi\xi},({U_{\xi}}/{V})_{\xi})(t)\|_{L^p}
        \leq
        C\min\Big\{{\epsilon}^{2-{1}/{p}},(1+t)^{-1+{1}/{q}} \Big\};
      \end{aligned}
    \end{equation}
    \item[(iii)] $\sup_{\xi\in\mathbb{R}_+}|(V,U)(t,\xi)-(v^r,u^r)(\frac{\xi+s_-t}{t})|\to 0,$ as $t\to\infty$;
  \end{itemize}
  and also
  \begin{equation} \label{RW}
  \begin{cases}
    {V}_t-s_-V_{\xi}-{U}_{\xi}=0,\quad \xi>0,\ t>0, \\[2mm]
    {U}_t-s_-U_{\xi}+p({V})_{\xi}=0,\\[2mm]
    (V,U)(t,0)=(v_-,u_-),\quad (V,U)(t,\infty)=(v_+,u_+).
  \end{cases}
\end{equation}
\end{Lemma}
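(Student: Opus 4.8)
\emph{Proof proposal.} The plan is to read off every assertion from the scalar Burgers profile $w$ of the previous lemma, using that $(\tilde V,\tilde U)$ — hence $(V,U)$ — is built from $w(1+t,\cdot)$ by the fixed, smooth, strictly increasing map $\lambda_1^{-1}$ followed by the quadrature $v\mapsto u_--\int_{v_-}^{v}\lambda_1(s)\,\mathrm{d}s$, and by observing that $w$ takes values in the compact interval between $w_-$ and $w_+$, so that $\lambda_1$, $\lambda_1^{-1}$ and all their derivatives, as well as $1/\lambda_1^{-1}$, are bounded above and below by positive constants depending only on $v_{\pm}$ and $\gamma$. First I would record the chain–rule identities: since $\lambda_1(\tilde V)=w(1+t,\cdot)$ and $p'(v)=-\lambda_1(v)^2$,
\[
\tilde V_t-\tilde U_x=(\lambda_1^{-1})'(w)\,(w_t+ww_x),\qquad \tilde U_t+p(\tilde V)_x=-w\,(\lambda_1^{-1})'(w)\,(w_t+ww_x),
\]
and both right–hand sides vanish by (\ref{Burger}); thus $\tilde V_t-\tilde U_x=0$ and $\tilde U_t+p(\tilde V)_x=0$. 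Changing to the moving variable $\xi=x-s_-t$ (so $\partial_t|_\xi=\partial_t|_x+s_-\partial_x$ and $\partial_\xi=\partial_x$) turns these into the first two equations of (\ref{RW}).

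Next I would check the boundary data. As $x\to+\infty$ one is in the right constant region, $w(1+t,x)\to w_+$, so $V\to\lambda_1^{-1}(w_+)=v_+$ and $U\to u_--\int_{v_-}^{v_+}\lambda_1(s)\,\mathrm{d}s=u_+$, the last equality being precisely the defining relation of $R_1(v_-,u_-)$ together with the hypothesis $(v_+,u_+)\in R_1(v_-,u_-)$. At $\xi=0$, i.e. on the boundary line $x=s_-t$, is where the supersonic hypothesis is used: $(v_-,u_-)\in\Omega_{super}$ and $u_->0$ give $s_-=-u_-/v_-<-\sqrt{-p'(v_-)}=\lambda_1(v_-)=w_-$, so the boundary line lies in the region where, by the finite–propagation property of $w$, $w(1+t,\cdot)\equiv w_-$; hence $(V,U)(t,0)=(\lambda_1^{-1}(w_-),u_-)=(v_-,u_-)$. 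The one delicate bookkeeping point here is to make sure $x=s_-t$ really stays inside the left constant state of $w(1+t,\cdot)$; this is exactly what $s_-<w_-$ buys, with the usual care for small $t$.

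For (i): $V_\xi=\tilde V_x=(\lambda_1^{-1})'(w)w_x\ge0$ and $U_\xi=\tilde U_x=-\lambda_1(\tilde V)\tilde V_x=-w\,\tilde V_x\ge0$, using $w_x\ge0$, $(\lambda_1^{-1})'>0$ and $w<0$. For (ii), differentiating the defining relations once gives $|(V_\xi,U_\xi)|\lesssim|w_x|$ and twice gives $|(V_{\xi\xi},U_{\xi\xi},(U_\xi/V)_\xi)|\lesssim w_x^2+|w_{xx}|$ (all coefficients bounded because $w$ has compact range; for the last quantity one also uses that $V=\tilde V$ is bounded below). Taking $L^p$ norms, replacing $t$ by $1+t$, writing $\|w_x^2\|_{L^p}=\|w_x\|_{L^{2p}}^2$, and inserting the bounds of the previous lemma yields (\ref{Uxip}) and (\ref{Uxixip}); in (\ref{Uxixip}) the $w_{xx}$–contribution, decaying like $(1+t)^{-1+1/q}$, dominates the $(1+t)^{-2+1/p}$ coming from $w_x^2$ since $q\ge 10$.

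Finally, (iii) is immediate from the uniform convergence $\sup_x|w(1+t,x)-w^r(x/t)|\to0$: because $V(t,\xi)=\lambda_1^{-1}\big(w(1+t,\xi+s_-t)\big)$ while $v^r\big(\tfrac{\xi+s_-t}{t}\big)=\lambda_1^{-1}\big(w^r(\tfrac{\xi+s_-t}{t})\big)$, with $U$ and $u^r$ the same quadrature applied to these, Lipschitz continuity of $\lambda_1^{-1}$ and of the quadrature on the compact range gives $\sup_{\xi\ge0}\big|(V,U)(t,\xi)-(v^r,u^r)\big(\tfrac{\xi+s_-t}{t}\big)\big|\lesssim\sup_x|w(1+t,x)-w^r(x/t)|\to0$. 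Altogether this is essentially routine once the compactness of the range of $w$ is noted; the one genuinely delicate step — and the place I expect to have to be careful — is the boundary–condition check at $\xi=0$, where the supersonic assumption $s_-<\lambda_1(v_-)$ is indispensable.
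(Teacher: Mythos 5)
Your reductions of (i), (ii), (iii) and of the two differential equations in (\ref{RW}) to Lemma 2.3 via the chain rule are correct and are indeed the intended route (the paper states the lemma without proof, as a direct consequence of the construction and of Lemma 2.3): the identities $\tilde V_t-\tilde U_x=(\lambda_1^{-1})'(w)(w_t+ww_x)=0$ and $\tilde U_t+p(\tilde V)_x=-w(\lambda_1^{-1})'(w)(w_t+ww_x)=0$, the monotonicity $U_\xi=-w\,\tilde V_x\ge 0$, the bookkeeping $\|w_x^2\|_{L^p}=\|w_x\|_{L^{2p}}^2$ with the $w_{xx}$-contribution dictating the $(1+t)^{-1+1/q}$ rate in (\ref{Uxixip}), the limit $(V,U)(t,\infty)=(v_+,u_+)$ from the definition of $R_1(v_-,u_-)$, and the transfer of Lemma 2.3(iv) through the Lipschitz maps for (iii) are all fine.

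The gap is exactly at the step you flag as delicate, and it is not repaired by ``usual care for small $t$''. By Lemma 2.3(iii), $w(1+t,\cdot)\equiv w_-$ only on $\{x\le w_-(1+t)\}$, so the claim $(V,U)(t,0)=(v_-,u_-)$ requires $s_-t\le w_-(1+t)$, i.e. $(s_--w_-)t\le w_-$. Since $s_-<w_-<0$, this holds only for $t\ge w_-/(s_--w_-)>0$ and fails for every smaller $t$; at $t=0$ it reads $0\le w_-$, which is false. Concretely, the backward characteristic of $w$ through the point $(1,0)$ starts at $x_0=-w(1,0)\in(-w_+,-w_-]\subset(0,\infty)$, where $w_0(x_0)>w_-$, so with the construction exactly as in (\ref{Burger}) and (\ref{RWVU}) one has $w(1,0)>w_-$ strictly and hence $(V,U)(0,0)\neq(v_-,u_-)$ (the discrepancy is positive, though only of size $O(\epsilon^{q+1})$); the same happens on the whole initial time interval $0\le t<w_-/(s_--w_-)$. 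The supersonic inequality $s_-<w_-$ does make the argument work, but only after an extra ingredient that you (and, with the construction as literally written, the statement itself) omit: for instance, translate the Burgers datum to the right by $-w_->0$, so that $w(0,\cdot)\equiv w_-$ on $(-\infty,-w_-]$ and the left constant region of $w(1+t,\cdot)$ becomes $\{x\le w_-t\}\supset\{x\le s_-t\}$ for all $t\ge0$; alternatively one must keep a small nonzero boundary trace of $(\phi,\psi)$ at $\xi=0$ and propagate it through (\ref{RWperturbation}) and (\ref{Equality}). This is not cosmetic: the exact identities $(\phi,\psi)|_{\xi=0}=(0,0)$ and (\ref{Equality}) are used throughout the a priori estimates of Section 4, so your verification of the boundary condition in (\ref{RW}) must be amended in one of these ways before the lemma is actually established.
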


Put the perturbation $(\phi,\psi)(t,\xi)$ by
\begin{equation}
(\phi,\psi)(t,\xi)=(v,u)(t,\xi)-(V,U)(\xi),
\end{equation}
then the reformulated problem is
\begin{equation} \label{RWperturbation}
  \begin{cases}
   {\phi}_t-s_{-}{\phi}_{\xi}-{\psi}_{\xi}=0, \quad  \xi>0,\ t>0,\\[2mm]
   {\psi}_t-s_{-}{\psi}_{\xi}+(p(V+\phi)-p(V))_{\xi}
   =\mu\left(\frac{U_{\xi}+{\psi}_{\xi}}{V+\phi}\right)_{\xi}, \\[2mm]
   (\phi,\psi)|_{\xi=0}=(0,0),\\[2mm]
   (\phi,\psi)|_{t=0}=({\phi}_0,{\psi}_0)(\xi):=(v_0-V,u_0-U)(\xi),
  \end{cases}
\end{equation}
from (\ref{LagrangeI}) and (\ref{RW}).
Then the time-local existence of the solution $(\phi,\psi)(t,\xi)$ to (\ref{RWperturbation}) is
quoted in the next lemma.
\begin{Lemma}
\label{localsRW}
Let $({\phi}_0,{\psi}_0)$ be in $H_0^1(\mathbb{R}_{+})$.
If $\sup_{{\mathbb{R}_+}}(V+\phi_0)\leq M$ and
$\inf_{{\mathbb{R}_+}}(V+\phi_0)\geq m$,
then there exists
$t_0 >0$
depending only on $m$, $M$ and $\|({\phi}_0,{\psi}_{0})\|_1$
such that (\ref{RWperturbation}) has a unique solution
$(\phi,\psi)\in X_{m/2,2M}(0,t_0)$ satisfying
(\ref{11}) and (\ref{Equality})
for each $0\leq t\leq t_{0}$.
\end{Lemma}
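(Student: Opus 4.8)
\noindent The plan is to construct the solution by the standard linear iteration adapted to the mixed hyperbolic--parabolic structure of \eqref{RWperturbation}, exactly as in the boundary layer case (Lemma \ref{locals}, cf. \cite{M-N2001}). The only new feature is that the background profile $(V,U)=(V,U)(t,\xi)$ now depends on $t$ as well; but by its construction and the bounds \eqref{Uxip}--\eqref{Uxixip}, the functions $V$, $U$ and their first and second derivatives are uniformly bounded on $[0,t_0]\times\mathbb{R}_+$ and $V$ is bounded away from $0$, so this dependence produces only lower-order, uniformly bounded source terms and does not affect the local argument. Concretely, given $(\phi^n,\psi^n)$ with $\phi^n|_{\xi=0}=0$ and $m/2\le V+\phi^n\le 2M$, I would first solve the linear parabolic problem
\[
\psi^{n+1}_t-s_-\psi^{n+1}_\xi-\mu\Big(\frac{\psi^{n+1}_\xi}{V+\phi^n}\Big)_\xi
=\mu\Big(\frac{U_\xi}{V+\phi^n}\Big)_\xi-\big(p(V+\phi^n)-p(V)\big)_\xi,\quad
\psi^{n+1}|_{\xi=0}=0,\ \ \psi^{n+1}|_{t=0}=\psi_0,
\]
and then the linear transport problem $\phi^{n+1}_t-s_-\phi^{n+1}_\xi=\psi^{n+1}_\xi$ with $\phi^{n+1}|_{\xi=0}=0$, $\phi^{n+1}|_{t=0}=\phi_0$; since $s_-<0$, the characteristics of the transport equation enter $\mathbb{R}_+$ through $\xi=0$, so prescribing $\phi^{n+1}$ there is the correct boundary condition. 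Linear parabolic theory gives $\psi^{n+1}\in C([0,t_0];H^1)\cap L^2(0,t_0;H^2)$ with $\psi^{n+1}_t\in L^2(0,t_0;L^2)$, and the transport equations for $\phi^{n+1}$ and $\phi^{n+1}_\xi$ (the latter with source $\psi^{n+1}_{\xi\xi}\in L^2(0,t_0;L^2)$ and trace $\phi^{n+1}_\xi(t,0)=-\psi^{n+1}_\xi(t,0)/s_-$) then give $\phi^{n+1}\in C([0,t_0];H^1)$.

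Next I would derive uniform-in-$n$ bounds on a short interval. Multiplying the $\psi$-equation by $\psi^{n+1}$ and by $-\psi^{n+1}_{\xi\xi}$, the $\phi$-equation and its $\xi$-derivative by $\phi^{n+1}$ and $\phi^{n+1}_\xi$, integrating over $\mathbb{R}_+$, using $\phi^{n+1}(t,0)=\psi^{n+1}(t,0)=0$ together with the trace identity \eqref{Equality} to handle the boundary contributions at $\xi=0$, and invoking the uniform bounds on $(V,U)$, one obtains a differential inequality of the form
\[
\frac{d}{dt}\big\|(\phi^{n+1},\psi^{n+1})(t)\big\|_1^2+c\|\psi^{n+1}_\xi(t)\|_1^2
\le C(m,M)\big(1+\|(\phi^{n+1},\psi^{n+1})(t)\|_1^2\big).
\]
Choosing $t_0=t_0(m,M,\|(\phi_0,\psi_0)\|_1)$ sufficiently small, Gronwall's inequality propagates $\|(\phi^{n+1},\psi^{n+1})(t)\|_1\le 2\|(\phi_0,\psi_0)\|_1$ for $t\in[0,t_0]$; moreover, from $|\phi^{n+1}(t,\xi)|\le\int_0^t|\phi^{n+1}_t(s,\xi)|\,ds$, the equation, and this $H^1$ bound, $\sup_{\mathbb{R}_+}|\phi^{n+1}(t,\cdot)|$ can be made as small as desired on $[0,t_0]$, which together with the bounds on $V$ gives $m/2\le V+\phi^{n+1}\le 2M$. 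This closes the induction, so the whole sequence lies in $X_{m/2,2M}(0,t_0)$.

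Finally I would pass to the limit: the differences $(\phi^{n+1}-\phi^{n},\psi^{n+1}-\psi^{n})$ satisfy linear problems of the same type whose right-hand sides are, thanks to the uniform bounds, controlled by the previous differences, so a Gronwall estimate in $C([0,t_0];L^2)$ (shrinking $t_0$ if necessary) shows the sequence is Cauchy; its limit $(\phi,\psi)\in X_{m/2,2M}(0,t_0)$ solves \eqref{RWperturbation} with the stated regularity. Estimate \eqref{11} is exactly the propagated $H^1$ bound, and \eqref{Equality} follows by evaluating \eqref{RWperturbation}$_1$ at $\xi=0$: since $\phi(t,0)\equiv 0$ we have $\phi_t(t,0)=0$, hence $-s_-\phi_\xi(t,0)-\psi_\xi(t,0)=0$. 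Uniqueness follows from the same $L^2$ energy estimate applied to the difference of two solutions. The step I expect to be the main obstacle is controlling the hyperbolic variable $\phi$ in $H^1$: because $\phi_\xi$ solves a transport equation with source $\psi_{\xi\xi}$, the $\|\phi_\xi\|^2$ estimate produces a boundary term at $\xi=0$ proportional to $-s_-\phi_\xi(t,0)^2>0$, which has the unfavorable sign; it must be absorbed by using \eqref{Equality} to replace it by a multiple of $\psi_\xi(t,0)^2$, which is in turn dominated by the parabolic dissipation $\|\psi_{\xi\xi}\|\,\|\psi_\xi\|$ via a trace (interpolation) inequality --- precisely the mechanism of \cite{M-N2001}. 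On the short time interval $[0,t_0]$ all the constants are harmless, and the argument closes.
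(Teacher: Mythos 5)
Your proposal is the standard local-existence argument that the paper itself does not spell out: Lemma~\ref{localsRW} is simply quoted, in parallel with Lemma~\ref{locals} which is taken from \cite{M-N2001}, so your linear iteration (parabolic step for $\psi$, transport step for $\phi$, uniform short-time $H^1$ bounds, contraction in $C([0,t_0];L^2)$, and the trace identity \eqref{Equality} read off from \eqref{RWperturbation}$_1$ at $\xi=0$) is essentially the same route the paper relies on, and your treatment of the time-dependence of $(V,U)$ via \eqref{Uxip}--\eqref{Uxixip} and of the boundary term $-\tfrac{s_-}{2}\phi_\xi(t,0)^2$ via \eqref{Equality} plus the trace inequality is exactly the intended mechanism.

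One step should be restated, since as written it cannot work in this large-perturbation setting: you claim $|\phi^{n+1}(t,\xi)|\leq\int_0^t|\phi^{n+1}_t(s,\xi)|\,\mathrm{d}s$ and that $\sup_{\mathbb{R}_+}|\phi^{n+1}(t,\cdot)|$ is small, which omits the (possibly large) initial datum $\phi_0$; moreover $m$ and $M$ bound $V+\phi_0$, not $V$, so smallness of $\phi^{n+1}$ together with ``bounds on $V$'' would not yield $m/2\leq V+\phi^{n+1}\leq 2M$. The correct (and standard) version controls the deviation from the initial state: $\|\phi^{n+1}(t)-\phi_0\|_{L^\infty}\lesssim\|\phi^{n+1}(t)-\phi_0\|^{1/2}\|\phi^{n+1}_\xi(t)-\phi_{0\xi}\|^{1/2}\lesssim t_0^{1/2}C(m,M,\|(\phi_0,\psi_0)\|_1)$, using $\phi^{n+1}(t)-\phi_0=\int_0^t(s_-\phi^{n+1}_\xi+\psi^{n+1}_\xi)\,\mathrm{d}s$ and the propagated $H^1$ bound, and then compares with $m\leq V+\phi_0\leq M$ to get $m/2\leq V+\phi^{n+1}\leq 2M$ for $t_0$ small. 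With this repair the argument closes as you describe.
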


With the above result in hand, for the nonlinear stability of supersonic rarefaction wave, we can get that
\begin{Theorem}
Assume that $(v_-,u_-)\in \Omega_{super}$ and $(v_+,u_+)\in R_1(v_-,u_-)$.
Let $({\phi}_0,{\psi}_0)\in H_0^1(\mathbb{R}_{+})$ satisfy
\begin{equation}
  {\|(\phi_0,\psi_0)\|}_1\leq C (\epsilon^{-\alpha}+1),\ u_-\geq C\epsilon^{-l_0},\
  \textrm{and}\
  C^{-1}\epsilon^{l}\leq V+\phi_0\leq C\epsilon^{-l},
\end{equation}
where $C$ is a positive constant independent of $\epsilon$.
If the indices $l\geq 0,\ \alpha\ \textrm{and}\ l_0$ satisfy
\begin{equation} \label{indexR}
  \begin{cases}
    l\leq \frac{1}{8\gamma-2},\\[2mm]
    l_0>2\alpha+(\gamma+1)l,\\[2mm]
    4\alpha+2(\gamma+1)l\geq \max\{l,(\gamma-1)l\},\\[2mm]
    4\alpha+2(\gamma+1)l<\min\left\{\frac{\gamma-1}{6\gamma^2-6\gamma+2},
    \frac{2(\gamma-1)}{3\gamma^2+3\gamma+9}\right\},
  \end{cases}
\end{equation}
there exists $\epsilon_0>0$ suitably small such that if $\epsilon\leq\epsilon_0$,
(\ref{LagrangeI}) has a unique solution $(v,u)$ satisfying
$
  (v-V,u-U)\in C([0,\infty);H_0^1),
$
where $(V,U)$ is defined by (\ref{RWVU}).
Furthermore, it holds
\begin{equation} \label{largetimeRW}
  \sup_{x\geq s_-t}|(v,u)(t,x)-(V,U)(t,x-s_-t)|
  \to 0, \ \textrm{as}\ t\to\infty.
\end{equation}
\end{Theorem}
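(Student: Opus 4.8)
The plan is the classical scheme: local existence (Lemma~\ref{localsRW}), uniform-in-$T$ a priori estimates, and continuation. On any interval $[0,T]$ carrying a solution of \eqref{RWperturbation} with running density bounds $m_1\le V+\phi\le M_1$, I would derive estimates that are uniform in $T$ and \emph{strictly improve} $m_1,M_1$ and the running $H^1$ norm, running the continuation with the ansatz $C^{-1}\epsilon^{\kappa}\le V+\phi\le C\epsilon^{-\kappa}$ and $\sup_{0\le t\le T}\|(\phi,\psi)(t)\|_1\le C(\epsilon^{-\alpha}+1)$, where $\kappa=\kappa(\alpha,l,\gamma)$ is forced upon us by the Kanel step below. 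All generic constants depend on $v_\pm$ but not on $\epsilon$, and at the end each surviving factor $m_1^{-1},M_1$ is turned into a power of $\epsilon$; the role of \eqref{indexR} is precisely to make such a $\kappa$ exist and to leave room for all error and boundary terms. The supersonic hypothesis $s_-^2=u_-^2/v_-^2>-p'(v_-)$ and the largeness $u_-\ge C\epsilon^{-l_0}$ (hence $|s_-|$ large, with $l_0>2\alpha+(\gamma+1)l$ ensuring dominance) enter only through the boundary analysis in the last step.

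\textbf{Zeroth-order estimate.} Introduce the relative entropy $\mathcal{E}=\tfrac12\psi^2+\Phi(V+\phi)-\Phi(V)-\Phi'(V)\phi$ with $\Phi'(v)=-p(v)$, $\Phi''=-p'>0$, so that $\mathcal{E}\ge0$ and $\mathcal{E}\sim\psi^2+\phi^2$ on $m_1\le v\le M_1$. Multiplying \eqref{RWperturbation}$_2$ by $\psi$ and \eqref{RWperturbation}$_1$ by $p(V)-p(V+\phi)$, adding and integrating over $\mathbb{R}_+\times[0,t]$ yields an identity of the form
$$
\int_{\mathbb{R}_+}\mathcal{E}\,d\xi+c\int_0^t\!\!\int_{\mathbb{R}_+}\Big(\frac{\psi_\xi^2}{v}+U_\xi\,\phi^2\Big)d\xi\,ds=\int_{\mathbb{R}_+}\mathcal{E}_0\,d\xi+\mathcal{R},
$$
where all $\xi=0$ boundary contributions vanish because $(\phi,\psi)(t,0)=(0,0)$, the $U_\xi\ge0$ term is favourable since the wave is an expansion, and $\mathcal{R}$ collects the inhomogeneities arising because $(V,U)$ solves only the Euler system \eqref{RW} and not the viscous one. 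The first-order part of $\mathcal{R}$ is absorbed into the two good terms by Cauchy--Schwarz, and its higher-order part is controlled via \eqref{Uxip}--\eqref{Uxixip}; since the latter decays like $(1+t)^{-1+1/q}$ and $q\ge10$, it is integrable in $t$. The outcome is $\sup_{[0,t]}\|(\phi,\psi)\|^2+\int_0^\infty\|\psi_\xi\|^2\,ds\lesssim\|(\phi_0,\psi_0)\|^2+\epsilon^\theta\lesssim\epsilon^{-2\alpha}$, with the implicit constant still carrying powers of $m_1^{-1},M_1$.

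\textbf{Density bounds via Kanel's technique (the crux).} Since the first-order estimate needs density bounds, I would first obtain an auxiliary bound on $\int_{\mathbb{R}_+}(\phi_\xi/v)^2\,d\xi$ by an ``effective viscous flux'' computation (multiply the $u$-equation by a suitable multiple of $v_\xi/v$), still with constants in $m_1,M_1$ and the zeroth-order bound. Then, for fixed $t$, introduce a pair of Kanel-type functions $\Psi_\pm(v)=\int_{\bar v}^{v}\sqrt{\Phi(s)-\Phi(\bar v)-\Phi'(\bar v)(s-\bar v)}\,\omega_\pm(s)\,ds$ with weights $\omega_\pm$ chosen so that $\Psi_+(v)\to+\infty$ as $v\to\infty$ and $\Psi_-(v)\to+\infty$ as $v\to0^+$, the blow-up rates being explicit powers of $v$ depending on $\gamma$. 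Since $\phi(t,0)=0$,
$$
|\Psi_\pm(v(t,\xi))|=\Big|\int_0^\xi\partial_{\xi'}\Psi_\pm(v)\,d\xi'\Big|\le\Big(\int_{\mathbb{R}_+}\mathcal{E}\,d\xi\Big)^{1/2}\Big(\int_{\mathbb{R}_+}\omega_\pm(v)^2v_\xi^2\,d\xi\Big)^{1/2},
$$
and inserting the zeroth-order bound, the auxiliary $(\phi_\xi/v)$-bound, the initial bound $\|(\phi_{0\xi},\psi_{0\xi})\|\lesssim\epsilon^{-\alpha}$ (propagated below), and $\|V_\xi\|^2\lesssim\epsilon$ gives $\|\Psi_\pm(v(t))\|_{L^\infty}\lesssim\epsilon^{-(2\alpha+(\gamma+1)l)}$ up to constants. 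Inverting $\Psi_\pm$ produces a pointwise bound $C^{-1}\epsilon^{\kappa'}\le v(t,\xi)\le C\epsilon^{-\kappa'}$ with $\kappa'=\kappa'(\alpha,l,\gamma)$, and the last two lines of \eqref{indexR} are precisely the inequalities guaranteeing $\kappa'<\kappa$, which closes the density bootstrap; the $(\gamma-1)$ factors in them come from the exponents built into $\omega_\pm$.

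\textbf{First-order estimate, boundary terms, and conclusion.} With the density genuinely bounded, differentiate \eqref{RWperturbation} and run the $H^1$ estimate (multiply the $u$-equation by $-\psi_{\xi\xi}/v$, equivalently work with $u_\xi/v$ as in \cite{M-N2001}) to control $\sup_{[0,t]}(\|\phi_\xi\|^2+\|\psi_\xi\|^2)+\int_0^\infty\|\psi_{\xi\xi}\|^2\,ds$ by the data and the residuals, which now also involve $\|(V_{\xi\xi},U_{\xi\xi})\|$, again time-integrable thanks to $q\ge10$. The new point is the $\xi=0$ boundary terms: after eliminating $\psi_\xi(t,0)$ via the compatibility relation \eqref{Equality}, $\psi_\xi(t,0)=-s_-\phi_\xi(t,0)$, they reduce to $\phi_\xi(t,0)^2$ times a coefficient built from $s_-$, $p'(v_-)$ and $\mu V_\xi(t,0)/v_-^2$, and the supersonic condition $s_-^2>-p'(v_-)$ together with $|s_-|$ large (this is where $u_-\ge C\epsilon^{-l_0}$, $l_0>2\alpha+(\gamma+1)l$, is used) gives that coefficient the dissipative sign, so those terms are simply dropped. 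Assembling the three steps gives a closed estimate strictly inside the ansatz, hence global existence by the continuation argument; finally $\|\phi_\xi\|^2,\|\psi_\xi\|^2\in L^1(0,\infty)$ and the uniform $H^1$ bound force $\|(\phi,\psi)(t)\|_{L^\infty}\to0$, which with $\sup_{\xi\in\mathbb{R}_+}|(V,U)(t,\xi)-(v^r,u^r)(\tfrac{\xi+s_-t}{t})|\to0$ yields \eqref{largetimeRW}.

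\textbf{Main obstacle.} The genuine difficulty is not any single estimate but the interlocking of the $\epsilon$-exponents: the running density bound $\epsilon^{\pm\kappa}$ enters the constants of the zeroth- and first-order estimates, whose outputs feed Kanel's inequality, whose output $\epsilon^{\pm\kappa'}$ must in turn be strictly sharper than $\epsilon^{\pm\kappa}$, while the rarefaction residuals and the $\xi=0$ boundary terms must all fit inside the margin so created. Verifying that \eqref{indexR} is exactly the parameter region where this circle of inequalities closes is the technical heart of the argument, with the boundary analysis at $\xi=0$ — the only place where the supersonic and large-inflow hypotheses are really needed — as the second most delicate point.
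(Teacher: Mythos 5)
Your overall architecture (local existence via Lemma \ref{localsRW}, zeroth-order relative-entropy estimate, Kanel-type pointwise bounds on $v$, first-order estimate, continuation, with the exponent conditions \eqref{indexR} closing the bookkeeping) coincides with the paper's, but there is a concrete gap in the step that feeds Kanel's inequality. The auxiliary bound on $\int(\phi_\xi/v)^2\,\mathrm{d}\xi$ — in the paper the estimate \eqref{2ndEstimateRW} for $\tilde v_\xi/\tilde v$ — is \emph{not} closed by ``constants in $m_1,M_1$ and the zeroth-order bound'': multiplying the momentum equation by $\tilde v_\xi/\tilde v$ and integrating over $\xi\in(0,\infty)$ produces the boundary contribution $\tfrac{\mu|s_-|}{2}\int_0^t\left(\tilde v_\xi/\tilde v\right)^2(\tau,0)\,\mathrm{d}\tau=\tfrac{\mu}{2v_-u_-}\int_0^t\psi_\xi^2(\tau,0)\,\mathrm{d}\tau$ (using $\phi(\tau,0)=0$ and \eqref{Equality}), which enters with the unfavourable sign and is exactly one of the two main difficulties the paper singles out. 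It cannot be dropped or absorbed at that level; in the paper it is controlled only through the coupling with the second-order estimate \eqref{3rdEstimateRW}, whose boundary term $u_-\int_0^t\psi_\xi^2(\tau,0)\,\mathrm{d}\tau$ appears with the \emph{good} sign and a large weight, and the resulting circular system (the $\psi_{\xi\xi}$-estimate needs $\iint\tilde v_\xi^2/(v^\gamma\tilde v^2)$, which needs the boundary term, which needs the $\psi_{\xi\xi}$-dissipation) is closed by the nonlinear lemma of Strauss (Lemma \ref{Strauss}) under the smallness \eqref{conditionRW}, i.e. essentially $A_1A_2\lesssim\epsilon^{4l_0-8\alpha-4(\gamma+1)l}$, which is where $l_0>2\alpha+(\gamma+1)l$ is used. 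Your outline runs the $v_\xi$-bound and the Kanel step \emph{before} the first-order estimate, so as written the scheme cannot be executed; the three estimates must be carried simultaneously (or an equivalent simultaneous bootstrap must be set up).

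Relatedly, you have mislocated where supersonicity and the largeness of $u_-$ enter. In the $-\psi_{\xi\xi}$ (first-order) estimate the boundary term is simply $\tfrac{|s_-|}{2}\psi_\xi^2(\tau,0)$, which is dissipative automatically because $\psi(t,0)=0$ forces $\psi_t(t,0)=0$; no condition $s_-^2>-p'(v_-)$ is needed there, and no coefficient involving $p'(v_-)$ or $\mu V_\xi(t,0)/v_-^2$ arises with that multiplier, so there is nothing there to ``drop by a sign argument''. The hypotheses $u_-\ge C\epsilon^{-l_0}$, $l_0>2\alpha+(\gamma+1)l$ are needed precisely to make the bad boundary term of the $\tilde v_\xi/\tilde v$-estimate (weight $u_-^{-1}$) negligible against the good boundary dissipation of the $\psi_{\xi\xi}$-estimate (weight $u_-$) inside the Strauss/bootstrap closure of the nonlinear term $\int_0^t\|\phi_\xi/v\|^4\|\psi_\xi/\sqrt v\|^2\,\mathrm{d}\tau$; your sign argument as stated does not supply this mechanism, and without it the Kanel step has no admissible input for $\|\tilde v_\xi/\tilde v(t)\|$.
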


\begin{Remark}
  $l=0$, $0\leq \alpha<\frac{1}{4}\min\left\{\frac{\gamma-1}{6\gamma^2-6\gamma+2},
    \frac{2(\gamma-1)}{3\gamma^2+3\gamma+9}\right\}$ and $l_0>2\alpha$ imply (\ref{indexR}).
\end{Remark}

\subsection{Main Difficulties and Ideas}

To deduce the desired nonlinear stability result for the boundary layer solution, the rarefaction wave, and/or their superposition by the elementary energy method as in \cite{M-N2000}, it is sufficient to deduce certain uniform (with respect to the time variable $t$) energy type estimates on the solutions $(\phi(t, x), \psi(t,x))$ and the main difficulties to do so lie in the following:
\begin{itemize}
\item How to control the possible growth of $(\phi(t,x), \psi(t,x))$ caused by the nonlinearity of the equation \eqref{EulerI}$_1$-\eqref{EulerI}$_2$?
\item How to control the term
$$
\int_0^t \frac{1}{v_-^2}\phi_{\xi}^2(\tau,0)d\tau
  =\int^t_0\frac{1}{u_-^2}\psi_{\xi}^2(\tau,0)d\tau
$$
which is due to the inflow boundary condition \eqref{EulerI}$_3$?
\end{itemize}

The argument employed in \cite{M-N2000} is to use the smallness of $N(T):=\sup_{0\leq t\leq T}\left\{\|(\phi,\psi)(t)\|_1\right\}$ to overcome the above difficulties. One of the key points in such an
argument is that, based on the a priori assumption that $N(T)$ is sufficiently small,
one can deduce a uniform lower and upper positive bounds on the specific volume
$v(t,x)$. With such a bound on $v(t,x)$ in hand, one can thus deduce certain a priori
$H^1({\mathbb{R}_+})$ energy type estimates on $(\phi(t,x), \psi(t,x))$ in terms of the initial perturbation
$(\phi_0(x),\psi_0(x))$. Then combination of the above analysis with the standard continuation argument
yields the corresponding nonlinear stability result. It is worth pointing out that for the case when the strength of the underlying profile is small, for the nonlinear stability result obtained in \cite{M-N2000}, $ {\textrm{Osc}} v(t) := \sup_{x\in{\mathbb{R}_+}} v(t,x)-\inf_{x\in{\mathbb{R}_+}} v(t,x),$ the oscillation of the specific volume $v(t,x)$, should be sufficiently small also for all $t\in{\mathbb{R}_+}$.

What we are interested in this paper is to deduce the corresponding nonlinear stability results for the two cases listed in the introduction for a class of initial perturbation which can allow the initial density to have large oscillation, the argument used in \cite{M-N2000} can not used any longer. Our main ideas to yield the desired nonlinear stability results are the following:
\begin{itemize}
\item For the nonlinear stability of the boundary layer solution listed in Case I of the introduction, our main observation is that for the case when the underlying boundary layer solution is increasing, the basic energy estimate, cf. the estimate \eqref{1stEstimateBL+}, tells us that for each $t\in[0,T]$ the instant energy $\mathcal{E}(t)=\int_0^{+\infty}\left(\Phi(v(t,x),V(x))+\frac 12\psi^2(t,x)\right)dx$
    is bounded by the initial energy $\mathcal{E}(0)$ and thus one may use the smallness of $\mathcal{E}(0)$ to overcome the two difficulties mentioned above. Since our main purpose is to get a nonlinear stability result for which the oscillation of the specific volume $v(t,x)$ can be large, we need to deduce a precise estimates on $v(t,x)$ in terms of $\mathcal{E}(0)$ so that the whole analysis can be carried out. It is worth to emphasizing that Kanel's argument \cite{Kanel'} plays an important role in this step and it was to guarantee that the whole analysis to be carried out smoothly that we need to ask the parameters $\alpha, \beta,$ and $l$ to satisfies the conditions listed in Theorem 1.

\item For the case when the boundary layer solution is decreasing, the analysis can be adopted directly since in such a case the basic energy estimate is not self-contained. Even so, if the strength of the boundary layer solution is small, one can use the smallness of both the initial energy and the strength of the boundary layer solution to yield a nonlinear stability result similar to that of the case when the boundary layer solution is increasing.

\item For the nonlinear stability of supersonic rarefaction wave corresponding to the Case II listed in the introduction, our main idea is to use the largeness of $u_-$ to deal with the two difficulties mentioned above. In such a case, we do not ask the initial energy to be small and thus such a result holds for a class of large initial perturbation.
\end{itemize}

\section{Stability of the Boundary Layer Solution}
\subsection{Proof of Theorem 1}
In this subsection,
we first assume that $(v_-,u_-)\in \Omega_{sub}$, $(v_+,u_+)\in BL_{+}(v_-,u_-)$ and
the problem (\ref{BLperturbation}) has a solution
$(\phi,\psi)\in X_{1/m,M}(0,T)$ satisfying (\ref{Equality}) for some $T>0$ and each $0\leq t\leq T$.
We also simply write $c$ and $C$ as positive constants
independent of $T,\ m,\ M$ and $\epsilon$. Recall that the notation $A\lesssim B$ is used to denote that
$A\leq CB$ holds uniformly for some positive constant independent of $T,\ m,\ M$ and $\epsilon$.
Besides, we will often use the notation $(v, u) = (V +\phi,U +\psi)$
so that $1/m\leq v\leq M$,
though the unknown functions are $\phi$ and $\psi$.
Without loss of generality, we choose $m$ and $M$  such that $m,M\geq 1$.

Now we devote ourselves to the basic energy estimate.
\begin{Lemma}
It holds that for each $0\leq t\leq T$,
\begin{equation} \label{1stEstimateBL+}
\begin{aligned}
  \left\|(\sqrt{\Phi},\psi)(t)\right\|^2
  +\int^t_0\int_0^{\infty}\left[\frac{{\psi}^2_{\xi}}{v}
  +\frac{|U_{\xi}\phi{\psi}_{\xi}|}{v}+|U_{\xi}|\left(p(v)-p(V)-p'(V)\phi\right)\right]\mathrm{d}\xi \mathrm{d}\tau
  \lesssim \left\|(\sqrt{\Phi_0},\psi_0)\right\|^2,
\end{aligned}
\end{equation}
where
\begin{equation}\label{Phi}
  \Phi=\Phi(v,V)=p(V)(v-V)-\int_V^v p(\eta)\mathrm{d}\eta.
\end{equation}
\end{Lemma}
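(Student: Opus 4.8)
The plan is to carry out the standard relative-entropy (modified energy) estimate for the system \eqref{BLperturbation}, the main non-standard issue being the control of the viscous cross term in the presence of large density oscillation. First I would multiply the first equation of \eqref{BLperturbation} by $p(V)-p(v)$ (which is $\partial_v\Phi$) and the second by $\psi$, add the two resulting identities and integrate over $\xi\in(0,\infty)$. The natural energy density is $\tfrac12\psi^2+\Phi(v,V)$ with $\Phi$ as in \eqref{Phi}; recall that $\partial_v\Phi=p(V)-p(v)$, $\partial_V\Phi=p'(V)(v-V)=p'(V)\phi$, and that $\Phi\ge 0$ and $p(v)-p(V)-p'(V)\phi\ge 0$ by the convexity of $p(\cdot)$, so in particular $\|(\sqrt\Phi,\psi)(t)\|^2\le 2\int_0^\infty(\tfrac12\psi^2+\Phi)(t,\xi)\,\mathrm d\xi$.

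After integrating by parts in $\xi$ and using the profile equations \eqref{BL}, in particular $U_\xi=-s_-V_\xi$, the bulk terms reorganize into
\begin{equation*}
  \frac{\mathrm d}{\mathrm dt}\int_0^\infty\Big(\tfrac12\psi^2+\Phi\Big)\,\mathrm d\xi
  +\int_0^\infty U_\xi\big(p(v)-p(V)-p'(V)\phi\big)\,\mathrm d\xi
  +\mu\int_0^\infty\frac{\psi_\xi^2}{v}\,\mathrm d\xi
  =\mu\int_0^\infty\frac{U_\xi\phi\psi_\xi}{Vv}\,\mathrm d\xi.
\end{equation*}
All boundary contributions at $\xi=0$ drop out because $(\phi,\psi)(t,0)=(0,0)$, which also forces $v(t,0)=v_-$ and hence $\Phi(v(t,0),v_-)=0$ there; the ones at $\xi=\infty$ drop out by the $H^1$-decay of $(\phi,\psi)$ together with the exponential or algebraic decay of the profile from Lemma 2.1. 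Since $(v_+,u_+)\in BL_+(v_-,u_-)$ gives $u_+>u_-$, Lemma 2.1 tells us the profile is monotone increasing, so $U_\xi\ge 0$ and the second term is exactly $\int_0^\infty |U_\xi|(p(v)-p(V)-p'(V)\phi)\,\mathrm d\xi\ge 0$; this is precisely the point at which the increasing case closes by itself. It then remains to absorb the right-hand side.

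The main obstacle is this cross term $\mu\int_0^\infty\frac{U_\xi\phi\psi_\xi}{Vv}\,\mathrm d\xi$: since the solution is only known to satisfy $1/m\le v\le M$ with $m,M$ possibly huge (this is exactly the regime we want to allow), the usual bound $p(v)-p(V)-p'(V)\phi\gtrsim\phi^2$ with a uniform constant is not available. The tool I would use is the elementary convexity inequality
\begin{equation*}
  p(v)-p(V)-p'(V)(v-V)\ \ge\ c_\ast\,\frac{(v-V)^2}{v}\qquad\text{for all }v>0,
\end{equation*}
with $c_\ast>0$ depending only on $\gamma$ and on the (bounded) range of $V$; this is proved by inspecting the quotient $v\big(p(v)-p(V)-p'(V)(v-V)\big)/(v-V)^2$ in the three regimes $v\to 0^+$, $v\approx V$ and $v\to+\infty$, where it has finite positive limits (or $+\infty$). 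Together with $V\ge v_-$ and the bounds on $U_\xi$ from Lemma 2.1, Young's inequality then dominates the cross term by $\tfrac\mu2\int_0^\infty\frac{\psi_\xi^2}{v}\,\mathrm d\xi$ plus a controlled multiple of $\int_0^\infty |U_\xi|(p(v)-p(V)-p'(V)\phi)\,\mathrm d\xi$; the first part is absorbed into the dissipation, the second into the good term already on the left. Integrating in $t$ and using $(\phi,\psi)|_{t=0}=(\phi_0,\psi_0)$ together with $\int_0^\infty(\tfrac12\psi_0^2+\Phi_0)\,\mathrm d\xi\le\|(\sqrt{\Phi_0},\psi_0)\|^2$ then gives the bound with the first and third bracketed terms of \eqref{1stEstimateBL+}; finally the term $\int_0^t\int_0^\infty\frac{|U_\xi\phi\psi_\xi|}{v}$ is recovered by one more application of Young's inequality, $\frac{|U_\xi\phi\psi_\xi|}{v}\le\frac12\frac{\psi_\xi^2}{v}+\frac{\|U_\xi\|_{L^\infty}}{2c_\ast}|U_\xi|(p(v)-p(V)-p'(V)\phi)$, and the two estimates just obtained, which yields \eqref{1stEstimateBL+}.
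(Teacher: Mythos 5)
Your strategy is the same as the paper's up to the decisive step: multiply $(\ref{BLperturbation})_1$ by $p(V)-p(v)$ and $(\ref{BLperturbation})_2$ by $\psi$, integrate the divergence identity \eqref{BLdivergence1} (boundary terms vanish since $(\phi,\psi)(t,0)=0$), use $U_\xi\ge0$, and control the cross term via the convexity lower bound $p(v)-p(V)-p'(V)\phi\ge c_*\phi^2/v$, which is exactly \eqref{fvV}, $Vvf(v,V)\ge\gamma V^{-\gamma}$. The gap is in your absorption of the cross term. After Young's inequality, $\mu\frac{|U_\xi\phi\psi_\xi|}{Vv}\le\eta\mu\frac{\psi_\xi^2}{v}+\frac{\mu U_\xi}{4\eta V^2c_*}\,U_\xi\bigl(p(v)-p(V)-p'(V)\phi\bigr)$, and to keep the good term $\int|U_\xi|(p(v)-p(V)-p'(V)\phi)$ on the left (as the lemma requires) you need the coefficient $\frac{\mu U_\xi}{4\eta V^2c_*}$ to be strictly below $1$ pointwise. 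You justify this by ``$V\ge v_-$ and the bounds on $U_\xi$ from Lemma 2.1,'' i.e. $|U_\xi|\le C\delta$. But Theorem 1 does not assume the layer strength $\delta=|u_+-u_-|$ to be small --- allowing a strong increasing boundary layer is the point of this case --- so this bound alone does not make the coefficient less than $1$, and the quadratic form $\mu\frac{\psi_\xi^2}{v}-\mu\frac{U_\xi\phi\psi_\xi}{vV}+U_\xi f(v,V)\phi^2$ is not shown to be positive; the estimate does not close as argued.

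The missing ingredient is the paper's pointwise bound on $\mu U_\xi$ drawn from the profile equation \eqref{BL} itself: since $v_-\le V\le v_+$ in the increasing case, integrating \eqref{BL} gives $0\le\mu U_{\xi}=V\bigl(s_-^2(V-v_+)+p(V)-p(v_+)\bigr)\le Vp(V)=V^{1-\gamma}$, independently of $\delta$. Combined with $Vvf(v,V)\ge\gamma V^{-\gamma}$ this yields the discriminant bound $D=\frac{\mu U_\xi}{V^2vf(v,V)}-4\le\frac1\gamma-4<0$, uniformly in $\delta$, $m$, $M$, which is precisely the positivity you need; equivalently, with $c_*=\gamma V^{-\gamma-1}$ your Young step closes for any $\eta\in(\frac{1}{4\gamma},1)$ once $\mu U_\xi\le V^{1-\gamma}$ is known. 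Insert that computation and your proof becomes essentially the paper's; without it, the absorption step fails for large-amplitude layers.
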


\begin{proof}
Multiplying $(\ref{BLperturbation})_1$ (the first equation of (\ref{BLperturbation}))
and $(\ref{BLperturbation})_2$
by $p(V)-p(v)$ and $\psi$, respectively, and summing these two identities,
we find a divergence from
\begin{equation} \label{BLdivergence1}
  \begin{aligned}
  &\left(\Phi+\frac{1}{2}\psi^2\right)_t+\mu\frac{\psi_{\xi}^2}{v}
  -\mu\frac{U_{\xi}\phi\psi_{\xi}}{vV}+U_{\xi}(p(v)-p(V)-p'(V)\phi)\\[2mm]
  =&\left[s_-\left(\Phi+\frac{1}{2}\psi^2\right)+(p(V)-p(v))\psi
  +\mu\left(\frac{U_{\xi}+{\psi}_{\xi}}{V+\phi}-\frac{U_{\xi}}{V}\right)\psi\right]_{\xi}.
\end{aligned}
\end{equation}
We write
\begin{equation} \label{f}
  p(v)-p(V)-p'(V)\phi=f(v,V)\phi^2.
\end{equation}
Put $X=V/v>0$ and then recall Bernoulli's inequality
\begin{equation*}
  X^{\gamma+1}=X(1+X-1)^{\gamma}\geq X+\gamma X(X-1)
\end{equation*}
to find
\begin{equation} \label{fvV}
  Vvf(v,V)=V^{-\gamma}\frac{X^{\gamma+1}-(\gamma+1)X+\gamma}{(X-1)^2}\geq \gamma V^{-\gamma}.
\end{equation}
Noting that $v_-\leq V\leq v_+$, we have from (\ref{BL}) that
\begin{equation*}
  0\leq \mu U_{\xi}=V\left(s_-^2(V-v_+)+p(V)-p(v_+)\right)\leq Vp(V)=V^{-\gamma+1}.
\end{equation*}
Thus, the discriminant $D$ of
$$\mu\frac{\psi_{\xi}^2}{v}-\mu\frac{U_{\xi}\phi\psi_{\xi}}{vV}+U_{\xi}(p(v)-p(V)-p'(V)\phi)$$
satisfies
\begin{equation} \label{D}
  D=\frac{\mu U_{\xi}}{V^2vf(v,V)}-4\leq \frac{1}{\gamma}-4<0.
\end{equation}
Therefore, we integrate (\ref{BLdivergence1}) over $(0,t)\times(0,\infty)$ to get (\ref{1stEstimateBL+}).
\end{proof}

Next, following \cite{M-N1992}, we set $\tilde{v}:={v}/{V}$.
Then we have $\Phi(v,V)=V^{-\gamma+1}\tilde{\Phi}(\tilde{v})$
with
\begin{equation} \label{tildeofPhi}
  \tilde{\Phi}(\tilde{v})=\tilde{v}-1+\frac{1}{\gamma-1}(\tilde{v}^{-\gamma+1}-1).
\end{equation}
Equation $(\ref{BLperturbation})_2$ is also written as
\begin{equation} \label{BLperturbation2}
\left(\mu\frac{\tilde{v}_{\xi}}{\tilde{v}}-\psi\right)_t-
s_{-}\left(\mu\frac{\tilde{v}_{\xi}}{\tilde{v}}-\psi\right)_{\xi}
+\frac{\gamma\tilde{v}_{\xi}}{V^{\gamma}{\tilde{v}}^{\gamma+1}}
=\frac{\gamma V_{\xi}}{V^{\gamma+1}}(1-{\tilde{v}}^{-\gamma}).
\end{equation}
Multiplying $(\ref{BLperturbation2})$ by ${\tilde{v}_{\xi}}/{\tilde{v}}$,
we discover
\begin{equation} \label{BLdivergence2}
  \begin{aligned}
    &\left[\frac{\mu}{2}\left(\frac{\tilde{v}_{\xi}}{\tilde{v}}\right)^2
    -\psi\frac{\tilde{v}_{\xi}}{\tilde{v}}\right]_t
     +\frac{\gamma \tilde{v}_{\xi}^2}{v^{\gamma}\tilde{v}^2}\\[2mm]
    &+\left[\psi\frac{\tilde{v}_t}{\tilde{v}}-\frac{\gamma V_{\xi}}{V^{\gamma+1}}\left(\frac{\tilde{v}^{-\gamma}-1}{\gamma}+\ln\tilde{v}\right)
     -\frac{\mu s_-}{2}\left(\frac{\tilde{v}_{\xi}}{\tilde{v}}\right)^2\right]_{\xi}\\[2mm]
    =&~\frac{\psi_{\xi}^2}{v}-\frac{U_{\xi}\phi\psi_{\xi}}{vV}
     \underbrace{-\gamma\frac{VV_{\xi\xi}-(\gamma+1)V_{\xi}^2}{V^{\gamma+2}}
     \left(\frac{\tilde{v}^{-\gamma}-1}{\gamma}+\ln\tilde{v}\right)}_{I_1}.
  \end{aligned}
\end{equation}
We utilize (\ref{BLdecay1}) and the fact that
\begin{equation}
  p(v)-p(V)-p'(V)\phi=V^{-\gamma}(\tilde{v}^{-\gamma}-1+\gamma(\tilde{v}-1))
\end{equation} to find
\begin{equation*}
\begin{aligned}
  |I_1|
  \lesssim |U_{\xi}|(\tilde{v}^{-\gamma}-1+\gamma(\tilde{v}-1))
  \lesssim |U_{\xi}|(p(v)-p(V)-p'(V)\phi).
\end{aligned}
\end{equation*}
Thus, integrating (\ref{BLdivergence2}) over $(0,t)\times(0,\infty)$ yields
\begin{equation} \label{badterm}
  \begin{aligned}
    \left\|\frac{\tilde{v}_{\xi}}{\tilde{v}}(t)\right\|^2+
    \int_0^t\int_0^{\infty}\frac{\tilde{v}_{\xi}^2}{v^{\gamma}\tilde{v}^2}\mathrm{d}\xi \mathrm{d}\tau
    \lesssim\left\|\left(\sqrt{\Phi_0},\psi_0,\frac{\tilde{v}_{0\xi}}{\tilde{v}_0}\right)\right\|^2+
    \int_0^t \left(\frac{\tilde{v}_{\xi}}{\tilde{v}}\right)^2(\tau,0)\mathrm{d}\tau.
  \end{aligned}
\end{equation}
We have to control the final term of (\ref{badterm}),
$\int_0^t(\frac{\tilde{v}_{\xi}}{\tilde{v}})^2(\tau,0)\mathrm{d}\tau$.
We note here that Matsumura-Nishihara \cite{M-N2001} could control it under the smallness assumption that
\begin{equation} \label{smallnessC}
  N(T):=\sup_{0\leq t\leq T}\|(\phi,\psi)(t)\|_1\ll 1,
\end{equation}
while our goal in this paper is to investigate the stability of the boundary layer solution and the rarefaction wave
without such a smallness condition (\ref{smallnessC}).

Since
\begin{equation} \label{log(tildeofv)}
  \frac{\tilde{v}_{\xi}}{\tilde{v}}=\frac{\phi_{\xi}}{v}-\frac{V_{\xi}\phi}{vV},
\end{equation}
we get from $\phi(\tau,0)=0$ and (\ref{Equality}) that
\begin{equation}
  \left(\frac{\tilde{v}_{\xi}}{\tilde{v}}\right)^2(\tau,0)=\frac{1}{v_-^2}\phi_{\xi}^2(\tau,0)
  =\frac{1}{u_-^2}\psi_{\xi}^2(\tau,0),
\end{equation}
which together with (\ref{badterm}) implies the following lemma.
\begin{Lemma} It holds that
\begin{equation} \label{2ndEstimateBL+}
\begin{aligned}
      \left\|\frac{\tilde{v}_{\xi}}{\tilde{v}}(t)\right\|^2+
      \int_0^t\int_0^{\infty}\frac{\tilde{v}_{\xi}^2}{v^{\gamma}\tilde{v}^2}\mathrm{d}\xi \mathrm{d}\tau
    \lesssim\left\|\left(\sqrt{\Phi_0},\psi_0,\frac{\tilde{v}_{0\xi}}{\tilde{v}_0}\right)\right\|^2
    +  \int_0^t \psi_{\xi}^2(\tau,0)\mathrm{d}\tau.
\end{aligned}
\end{equation}
\end{Lemma}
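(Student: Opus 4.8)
The statement we must establish is the combination of the inequality \eqref{badterm} with the boundary identity derived from \eqref{log(tildeofv)}, \eqref{Equality} and $\phi(\tau,0)=0$; so the real content is \eqref{badterm} together with the reduction of the boundary integrand. The plan is therefore to prove \eqref{badterm} by testing the reformulated momentum equation \eqref{BLperturbation2} against $\tilde v_\xi/\tilde v$, which is the Matsumura--Nishihara device for extracting a positive $\tilde v_\xi^2$ dissipation, and then to handle the boundary terms produced by integration by parts in $\xi$ over $(0,\infty)$ and the integration in $t$ over $(0,t)$.

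First I would record the identity \eqref{BLdivergence2}, obtained by multiplying \eqref{BLperturbation2} by $\tilde v_\xi/\tilde v$ and rearranging into a $\partial_t$ term, a nonnegative term $\gamma\tilde v_\xi^2/(v^\gamma\tilde v^2)$, a $\partial_\xi$ flux term, and a right-hand side consisting of $\psi_\xi^2/v$, $-U_\xi\phi\psi_\xi/(vV)$, and the lower-order term $I_1$. The term $I_1$ is controlled pointwise by $|U_\xi|\bigl(\tilde v^{-\gamma}-1+\gamma(\tilde v-1)\bigr)\sim |U_\xi|\bigl(p(v)-p(V)-p'(V)\phi\bigr)$ using \eqref{BLdecay1} to bound $|VV_{\xi\xi}-(\gamma+1)V_\xi^2|\lesssim |U_\xi|$; this is exactly the quantity whose time-integral is already controlled by the basic estimate \eqref{1stEstimateBL+}. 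Likewise $\int_0^t\int_0^\infty \psi_\xi^2/v$ and $\int_0^t\int_0^\infty |U_\xi\phi\psi_\xi|/v$ are bounded by $\|(\sqrt{\Phi_0},\psi_0)\|^2$ via Lemma 3.1. Integrating \eqref{BLdivergence2} over $(0,t)\times(0,\infty)$, the $\partial_t$ term gives $\|(\tilde v_\xi/\tilde v)(t)\|^2$ up to the cross term $-\psi\,\tilde v_\xi/\tilde v$, which is absorbed by Young's inequality into a small fraction of $\|\tilde v_\xi/\tilde v\|^2$ plus $\|\psi\|^2$ (already controlled); the $\partial_\xi$ flux term integrates to boundary values at $\xi=0$ and $\xi=\infty$. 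At $\xi=\infty$ all quantities vanish by the decay in Lemma 2.1 and $(\phi,\psi)\in H^1_0$; at $\xi=0$, using $\phi(\tau,0)=0$, $\psi(\tau,0)=0$ and $\tilde v_t(\tau,0)=0$, the only surviving contribution is the term $-\tfrac{\mu s_-}{2}(\tilde v_\xi/\tilde v)^2(\tau,0)$, whose sign ($s_-<0$) is the wrong one, so it must be kept as the error term on the right-hand side of \eqref{badterm}. This yields \eqref{badterm}.

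Then, to pass from \eqref{badterm} to \eqref{2ndEstimateBL+}, I would use \eqref{log(tildeofv)}: since $\phi(\tau,0)=0$ the second term drops and $(\tilde v_\xi/\tilde v)(\tau,0)=\phi_\xi(\tau,0)/v_-$; combining this with the boundary relation \eqref{Equality}, namely $s_-\phi_\xi(\tau,0)+\psi_\xi(\tau,0)=0$ together with $s_-=-u_-/v_-$, gives $\phi_\xi(\tau,0)/v_-=\psi_\xi(\tau,0)/u_-$, hence $(\tilde v_\xi/\tilde v)^2(\tau,0)=u_-^{-2}\psi_\xi^2(\tau,0)$. Substituting into the error term of \eqref{badterm} and absorbing the constant $u_-^{-2}$ into the implicit constant gives \eqref{2ndEstimateBL+}.

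The main obstacle is not any single computation but the bookkeeping of the boundary terms at $\xi=0$: one has to verify carefully that every term in the $\partial_\xi$ flux of \eqref{BLdivergence2} except $-\tfrac{\mu s_-}{2}(\tilde v_\xi/\tilde v)^2$ vanishes there — in particular that $\psi\,\tilde v_t/\tilde v$ vanishes because $\psi(\tau,0)=0$, and that the $V_\xi$-term vanishes because $\tilde v(\tau,0)=1$ forces $(\tilde v^{-\gamma}-1)/\gamma+\ln\tilde v=0$ at $\xi=0$ — and then to recognize that the surviving boundary term is precisely the "bad term" that the subsequent analysis (via Kanel's technique and the index conditions of Theorem 1) is designed to absorb. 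The remaining delicate point is ensuring the cross term $\psi\,\tilde v_\xi/\tilde v$ from the time derivative is absorbed with a coefficient small enough that a genuine $\|\tilde v_\xi/\tilde v(t)\|^2$ survives on the left; this is routine Young's inequality but must be done before the estimate is useful.
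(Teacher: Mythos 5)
Your proposal is correct and follows essentially the same route as the paper: multiply \eqref{BLperturbation2} by $\tilde v_\xi/\tilde v$ to get \eqref{BLdivergence2}, bound $I_1$ via \eqref{BLdecay1} by $|U_\xi|\left(p(v)-p(V)-p'(V)\phi\right)$, control the right-hand side and the cross term $\psi\,\tilde v_\xi/\tilde v$ through the basic estimate \eqref{1stEstimateBL+}, keep the surviving boundary flux $-\tfrac{\mu s_-}{2}(\tilde v_\xi/\tilde v)^2(\tau,0)$ as the error term in \eqref{badterm}, and convert it with $\phi(\tau,0)=0$, \eqref{log(tildeofv)} and \eqref{Equality} into $u_-^{-2}\psi_\xi^2(\tau,0)$. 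No gaps beyond what the paper itself leaves implicit.
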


We next estimate the last term
of (\ref{2ndEstimateBL+}).
Apply H\"{o}lder's inequality to find
\begin{equation}
  {\psi}_\xi^2(\tau,0)
=-2\int^{\infty}_0{{\psi}_\xi{\psi}_{\xi\xi}(\tau,\xi)}\mathrm{d}\xi
\leq 2M \left\|\frac{\psi_{\xi}}{\sqrt{v}}(\tau)\right\|\left\|\frac{\psi_{\xi\xi}}{\sqrt{v}}(\tau)\right\|
\end{equation}
and
\begin{equation} \label{boundaryterm1}
\begin{aligned}
   \int_0^t{{\psi}_\xi^2(\tau,0)}\mathrm{d}\tau
  \leq 2M \left[\int_0^t\left\|\frac{\psi_{\xi}}{\sqrt{v}}(\tau)\right\|^2\mathrm{d}\tau\right]^{\frac{1}{2}}
  \left[\int_0^t\left\|\frac{\psi_{\xi\xi}}{\sqrt{v}}(\tau)\right\|^2\mathrm{d}\tau\right]^{\frac{1}{2}}.
\end{aligned}
\end{equation}
Then applying Cauchy's inequality, we have from (\ref{boundaryterm1}) that
for each $a>1$,
\begin{equation} \label{boundaryterm2}
\begin{aligned}
   \int_0^t{{\psi}_\xi^2(\tau,0)}\mathrm{d}\tau
  \leq a\int_0^t\left\|\frac{\psi_{\xi}}{\sqrt{v}}(\tau)\right\|^2\mathrm{d}\tau + 4M^2a^{-1}\int_0^t\left\|\frac{\psi_{\xi\xi}}{\sqrt{v}}(\tau)\right\|^2\mathrm{d}\tau.
\end{aligned}
\end{equation}
Substituting (\ref{boundaryterm2})
into (\ref{2ndEstimateBL+}) and using the basic energy estimate (\ref{1stEstimateBL+}), we deduce
\begin{equation} \label{2ndEstimate'BL+}
\begin{aligned}
  \left\|\frac{\tilde{v}_{\xi}}{\tilde{v}}(t)\right\|^2+
  \int_0^t\int_0^{\infty}\frac{\tilde{v}_{\xi}^2}{v^{\gamma}\tilde{v}^2}\mathrm{d}\xi \mathrm{d}\tau
  \lesssim\left\|\frac{\tilde{v}_{0\xi}}{\tilde{v}_0}\right\|^2
  +a{\left\|(\sqrt{\Phi_0},\psi_0)\right\|}^2+
  M^2a^{-1}\int_0^t\left\|\frac{\psi_{\xi\xi}}{\sqrt{v}}(\tau)\right\|^2
  \mathrm{d}\tau.
\end{aligned}
\end{equation}
It is necessary to estimate the last term of (\ref{2ndEstimate'BL+}).
For this, we multiply $(\ref{BLperturbation})_2$ by $-\psi_{\xi\xi}$ to find
\begin{equation} \label{BLdivergence3}
 \begin{aligned}
   &\left(\frac{1}{2}\psi_{\xi}^2\right)_t
+\left(\frac{s_-}{2}\psi_{\xi}^2-\psi_t\psi_{\xi}\right)_{\xi}+\mu\frac{\psi_{\xi\xi}^2}{v}\\[2mm]
   =~&\underbrace{\mu\psi_{\xi\xi}\frac{V_{\xi}\psi_{\xi}}{v^2}
  +\mu\psi_{\xi\xi}\frac{\phi_{\xi}\psi_{\xi}}{v^2}}_{R_1}
+\underbrace{\psi_{\xi\xi}\phi_{\xi}\left(p'(v)+\mu\frac{U_{\xi}}{v^2}\right)}_{R_2}\\[2mm]
&+\underbrace{\psi_{\xi\xi}\left[(p'(v)-p'(V))V_{\xi}+\mu\frac{U_{\xi\xi}\phi}{vV}
  -\mu\frac{U_{\xi}V_{\xi}\phi}{v^2V}-\mu\frac{U_{\xi}V_{\xi}\phi}{vV^2}\right]}_{R_3},
 \end{aligned}
\end{equation}
Apply Cauchy's inequality and (\ref{log(tildeofv)}) to find
\begin{equation} \label{R1}
  R_1\leq \frac{\mu}{16}\frac{\psi_{\xi\xi}^2}{v}+CV_{\xi}^2\frac{\psi_{\xi}^2}{v^3}
+8\mu\frac{\phi_{\xi}^2\psi_{\xi}^2}{v^3}
\end{equation}
and
\begin{equation} \label{R2}
  \begin{aligned}
    R_2
       \leq& \frac{\mu}{32}\frac{\psi_{\xi\xi}^2}{v}
             +Cv\left(v^{-2\gamma-2}+v^{-4}U_{\xi}^2\right)
             \left(\tilde{v}_{\xi}^2+{V_{\xi}^2\phi^2}\right).
  \end{aligned}
\end{equation}
Noting that
\begin{equation}
  |p'(v)-p'(V)|
  =\int_0^1 p''(\theta v+(1-\theta)V)\mathrm{d}\theta |\phi|
  \lesssim (v^{-\gamma-2}+1)|\phi|,
\end{equation}
we have
\begin{equation} \label{R3}
  \begin{aligned}
    R_3
    \leq& \frac{\mu}{32}\frac{\psi_{\xi\xi}^2}{v}
              +Cv\left[V_{\xi}^2(v^{-2\gamma-4}+1)
              +{v^{-2}}{U_{\xi\xi}^2}+(v^{-4}+v^{-2}){U_{\xi}^2V_{\xi}^2}\right]\phi^2.
  \end{aligned}
\end{equation}
We now estimate the last term of (\ref{R1}). Applying Sobolev's inequality and Cauchy's inequality, we get
\begin{equation}
  \begin{aligned}
    &\int_0^{\infty}\frac{\phi_{\xi}^2\psi_{\xi}^2}{v^3}\mathrm{d}\xi
    \leq \left\|\frac{\phi_{\xi}}{v}\right\|^2\left\|\frac{\psi_{\xi}}{\sqrt{v}}\right\|_{L^{\infty}}^2 \\[2mm]
    \leq~& \left\|\frac{\phi_{\xi}}{v}\right\|^2\left\|\frac{\psi_{\xi}}{\sqrt{v}}\right\|
    \left[\left\|\frac{\psi_{\xi\xi}}{\sqrt{v}}\right\|+\left\|\frac{\phi_{\xi}\psi_{\xi}}{2v^{3/2}}\right\|
    +\left\|\frac{V_{\xi}\psi_{\xi}}{2v^{3/2}}\right\|\right]\\[2mm]
    \leq~& \frac{1}{128}\left\|\frac{\psi_{\xi\xi}}{\sqrt{v}}\right\|^2+
    \frac{1}{2}\left\|\frac{\phi_{\xi}\psi_{\xi}}{v^{3/2}}\right\|^2
    +\frac{1}{2}\left\|\frac{V_{\xi}\psi_{\xi}}{v^{3/2}}\right\|^2+
    C\left\|\frac{\phi_{\xi}}{v}\right\|^4\left\|\frac{\psi_{\xi}}{\sqrt{v}}\right\|^2,
  \end{aligned}
\end{equation}
which implies
\begin{equation} \label{R1_2}
  \int_0^{\infty}\frac{\phi_{\xi}^2\psi_{\xi}^2}{v^3}\mathrm{d}\xi
  \leq \frac{1}{64}\left\|\frac{\psi_{\xi\xi}}{\sqrt{v}}\right\|^2
    +\left\|\frac{V_{\xi}\psi_{\xi}}{v^{3/2}}\right\|^2+C\left\|\frac{\phi_{\xi}}{v}\right\|^4
    \left\|\frac{\psi_{\xi}}{\sqrt{v}}\right\|^2.
\end{equation}
Integrating (\ref{R1}) over $(0,\infty)$ and then plugging (\ref{R1_2}) into this last inequality, we find
\begin{equation} \label{R1'}
  \int_0^{\infty} R_1\mathrm{d}\xi
  \leq \frac{3\mu}{16}\left\|\frac{\psi_{\xi\xi}}{\sqrt{v}}\right\|^2
+C\left\|\frac{V_{\xi}\psi_{\xi}}{v^{3/2}}\right\|^2
  +C\left\|\frac{\phi_{\xi}}{v}\right\|^4\left\|\frac{\psi_{\xi}}{\sqrt{v}}\right\|^2.
\end{equation}
Therefore, we integrate (\ref{BLdivergence3}) over $(0,t)\times(0,\infty)$ and then recall
the above estimates (\ref{R2}), (\ref{R3}), (\ref{R1'}) and $1/m\leq v\leq M$ to conclude
\begin{equation} \label{3rdEstimateBL+}
  \begin{aligned}
    &\|\psi_{\xi}(t)\|^2+\int_0^t|s_-|\psi_{\xi}^2(\tau,0)\mathrm{d}\tau
    +\int_0^t\left\|\frac{\psi_{\xi\xi}}{\sqrt{v}}(\tau)\right\|^2\mathrm{d}\tau\\[2mm]
    \lesssim ~&\|\psi_{0\xi}\|^2
    +\int_0^t\int_0^{\infty}(m^{\gamma-1}+
    M^{\gamma-1}U_{\xi}^2)\frac{\tilde{v}_{\xi}^2}{v^{\gamma}\tilde{v}^2}\mathrm{d}\xi \mathrm{d}\tau\\[2mm]
    &+\int_0^t\int_0^{\infty}\left[J\phi^2+m^2V_{\xi}^2\frac{\psi_{\xi}^2}{v}\right]\mathrm{d}\xi \mathrm{d}\tau
    +\int_0^t\left\|\frac{\phi_{\xi}}{v}\right\|^4\left\|\frac{\psi_{\xi}}{\sqrt{v}}\right\|^2\mathrm{d}\tau
  \end{aligned}
\end{equation}
with
\begin{equation}\label{J}
  J
  =(v^{-2\gamma-3}+v)V_{\xi}^2+v^{-1}{U_{\xi\xi}^2}+(v^{-1}+v^{-3}){U_{\xi}^2V_{\xi}^2}.
\end{equation}
We use (\ref{BLdecay1}) and (\ref{fvV}) to deduce
\begin{equation}
    J\lesssim(v^{-2\gamma-3}+v)U_{\xi}^2
  \lesssim(m^{2\gamma+2}+M^2)|U_{\xi}|f(v,V).
\end{equation}
Hence,
we obtain from (\ref{1stEstimateBL+}) and (\ref{f}) that
\begin{equation} \label{3.1.0}
\begin{aligned}
       \int_0^t\int_0^{\infty}\left[J\phi^2+m^2V_{\xi}^2\frac{\psi_{\xi}^2}{v}\right]\mathrm{d}\xi \mathrm{d}\tau
  \lesssim(m^{2\gamma+2}+M^2) {\left\|(\sqrt{\Phi_0},\psi_0)\right\|}^2
\end{aligned}
\end{equation}
and
\begin{equation} \label{3.1.1}
  \int_0^t\left\|\frac{\phi_{\xi}}{v}\right\|^4\left\|\frac{\psi_{\xi}}{\sqrt{v}}\right\|^2\mathrm{d}\tau
  \lesssim{\left\|(\sqrt{\Phi_0},\psi_0)\right\|}^2\sup_{0\leq\tau\leq t}\left\|\frac{\phi_{\xi}}{v}(\tau)\right\|^4.
\end{equation}
Plugging estimates (\ref{3.1.0}) and (\ref{3.1.1}) into (\ref{3rdEstimateBL+}), we have
\begin{equation} \label{3rdEstBL+}
  \begin{aligned}
    &\|\psi_{\xi}(t)\|^2+\int_0^t|s_-|\psi_{\xi}^2(\tau,0)\mathrm{d}\tau
    +\int_0^t\left\|\frac{\psi_{\xi\xi}}{\sqrt{v}}(\tau)\right\|^2\mathrm{d}\tau\\[2mm]
    \lesssim ~&\|\psi_{0\xi}\|^2+(m^{\gamma-1}+M^{\gamma-1})
\int_0^t\int_0^{\infty}\frac{\tilde{v}_{\xi}^2}{v^{\gamma}\tilde{v}^2}\mathrm{d}\xi \mathrm{d}\tau
    \\[2mm]
        &+(m^{2\gamma+2}+M^2)
 {\left\|(\sqrt{\Phi_0},\psi_0)\right\|}^2+{\left\|(\sqrt{\Phi_0},\psi_0)\right\|}^2
\sup_{0\leq\tau\leq t}\left\|\frac{\phi_{\xi}}{v}(\tau)\right\|^4. \end{aligned}
\end{equation}
Since
\begin{equation}
  \Phi(v,V)
  =-\int_0^1\int_0^1\theta_1 p'(\theta_1\theta_2 v+(1-\theta_1\theta_2)V)\mathrm{d}\theta_1\mathrm{d}\theta_2 \phi^2,
\end{equation}
we have
\begin{equation} \label{Phiphi}
  C^{-1}M^{-\gamma-1}\phi^2\leq \Phi(v,V)\leq C m^{\gamma+1}\phi^2.
\end{equation}
Then we deduce from (\ref{log(tildeofv)}) that
\begin{equation} \label{3.1.2}
  \begin{aligned}
    \left\|\frac{\phi_{\xi}}{v}(\tau)\right\|^2
    \leq C\|V_{\xi}(\tau)\|_{L^{\infty}}^2m^2 M^{\gamma+1}\left\|\sqrt{\Phi}(\tau)\right\|^2+
    \left\|\frac{\tilde{v}_{\xi}}{\tilde{v}}(\tau)\right\|^2.
  \end{aligned}
\end{equation}
Hence we have from (\ref{1stEstimateBL+}), (\ref{2ndEstimateBL+}) and (\ref{boundaryterm1}) that
\begin{equation} \label{3.1.3}
  \begin{aligned}
    \left\|\frac{\phi_{\xi}}{v}(\tau)\right\|^4
\lesssim ~&m^4M^{2\gamma+2}{\left\|(\sqrt{\Phi_0},\psi_0)\right\|}^4+
    \left\|\frac{\tilde{v}_{0\xi}}{\tilde{v}_0}\right\|^4
           +\left[\int_0^t \psi_{\xi}^2(\tau,0)\mathrm{d}\tau\right]^2\\[2mm]
    \lesssim ~&m^4M^{2\gamma+2}{\left\|(\sqrt{\Phi_0},\psi_0)\right\|}^4+
    \left\|\frac{\tilde{v}_{0\xi}}{\tilde{v}_0}\right\|^4
           +M^2{\left\|(\sqrt{\Phi_0},\psi_0)\right\|}^2
           \int_0^t\left\|\frac{\psi_{\xi\xi}}{\sqrt{v}}(\tau)\right\|^2\mathrm{d}\tau.
 \end{aligned}
 \end{equation}
Substituting (\ref{3.1.3}) into (\ref{3rdEstBL+}), we find a positive constant $c$ such
that if
\begin{equation}
    M^2{\left\|(\sqrt{\Phi_0},\psi_0)\right\|}^4\leq c,
\end{equation}
then
\begin{equation} \label{3rdEBL+}
  \begin{aligned}
    &\|\psi_{\xi}(t)\|^2+\int_0^t|s_-|\psi_{\xi}^2(\tau,0)\mathrm{d}\tau
    +\int_0^t\left\|\frac{\psi_{\xi\xi}}{\sqrt{v}}(\tau)\right\|^2\mathrm{d}\tau\\[2mm]
 \lesssim ~& h(m,M,\phi_0,\psi_0)+(m^{\gamma-1}+M^{\gamma-1})
\int_0^t\int_0^{\infty}\frac{\tilde{v}_{\xi}^2}{v^{\gamma}\tilde{v}^2}\mathrm{d}\xi \mathrm{d}\tau,
 \end{aligned}
\end{equation}
with
\begin{equation}\label{h}
  h(m,M,\phi_0,\psi_0)=\|\psi_{0\xi}\|^2+\left[m^{2\gamma+2}+M^{2}+m^4M^{2\gamma+2}{\left\|(\sqrt{\Phi_0},\psi_0)\right\|}^4
      +\left\|\frac{\tilde{v}_{0\xi}}{\tilde{v}_0}\right\|^4\right]
 {\left\|(\sqrt{\Phi_0},\psi_0)\right\|}^2.
\end{equation}
Plugging (\ref{3rdEBL+}) into (\ref{2ndEstimate'BL+}) gives the following lemma.
 \begin{Lemma}
  There exists $c_0>0$ independent of $T,\ m,\ M$ and $a$,
  such that if
  \begin{equation} \label{conditionBL+}
    g_0(a,m,M,\phi_0,\psi_0):=M^2{\left\|(\sqrt{\Phi_0},\psi_0)\right\|}^4
    +M^2a^{-1}(m^{\gamma-1}+M^{\gamma-1})\leq c_0,
  \end{equation}
  it holds that for each $a>1$ and $0\leq t\leq T$,
  \begin{equation} \label{2ndEstimate''BL+}
      \left\|\frac{\tilde{v}_{\xi}}{\tilde{v}}(t)\right\|^2
      +\int_0^t\int_0^{\infty}\frac{\tilde{v}_{\xi}^2}{v^{\gamma}\tilde{v}^2}\mathrm{d}\xi \mathrm{d}\tau
      \lesssim \left\|\frac{\tilde{v}_{0\xi}}{\tilde{v}_0}\right\|^2
      +a{\left\|(\sqrt{\Phi_0},\psi_0)\right\|}^2+M^2a^{-1} h(m,M,\phi_0,\psi_0)
  \end{equation}
  and
  \begin{equation} \label{3rdEstimate'BL+}
    \begin{aligned}
      &\|\psi_{\xi}(t)\|^2+\int_0^t|s_-|\psi_{\xi}^2(\tau,0)\mathrm{d}\tau
      +\int_0^t\left\|\frac{\psi_{\xi\xi}}{\sqrt{v}}(\tau)\right\|^2\mathrm{d}\tau\\[2mm]
      \lesssim~ &
      h(m,M,\phi_0,\psi_0)+(m^{\gamma-1}+M^{\gamma-1})
      \left[\left\|\frac{\tilde{v}_{0\xi}}{\tilde{v}_0}\right\|^2
      +a{\left\|(\sqrt{\Phi_0},\psi_0)\right\|}^2\right],
    \end{aligned}
  \end{equation}
  where $h(m,M,\phi_0,\psi_0)$ is given by (\ref{h}).
\end{Lemma}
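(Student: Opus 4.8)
The plan is to close the coupled pair of a~priori estimates already obtained, namely (\ref{2ndEstimate'BL+}), which bounds $\|\tilde v_{\xi}/\tilde v(t)\|^2$ together with the dissipation $\int_0^t\!\!\int_0^\infty \tilde v_{\xi}^2/(v^\gamma\tilde v^2)\,\mathrm{d}\xi\,\mathrm{d}\tau$ at the price of the term $M^2a^{-1}\int_0^t\|\psi_{\xi\xi}/\sqrt v(\tau)\|^2\mathrm{d}\tau$, and (\ref{3rdEBL+}), which bounds $\|\psi_{\xi}(t)\|^2$, $\int_0^t|s_-|\psi_{\xi}^2(\tau,0)\mathrm{d}\tau$ and $\int_0^t\|\psi_{\xi\xi}/\sqrt v(\tau)\|^2\mathrm{d}\tau$ at the price of $h(m,M,\phi_0,\psi_0)$ and $(m^{\gamma-1}+M^{\gamma-1})\int_0^t\!\!\int_0^\infty \tilde v_{\xi}^2/(v^\gamma\tilde v^2)\,\mathrm{d}\xi\,\mathrm{d}\tau$. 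I first note that (\ref{3rdEBL+}) was established only under $M^2\|(\sqrt{\Phi_0},\psi_0)\|^4\le c$; since $g_0(a,m,M,\phi_0,\psi_0)\le c_0$ in particular forces $M^2\|(\sqrt{\Phi_0},\psi_0)\|^4\le c_0$, it suffices to choose $c_0\le c$ from the start, so that (\ref{3rdEBL+}) is available.

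Next I substitute (\ref{3rdEBL+}) into the last term $M^2a^{-1}\int_0^t\|\psi_{\xi\xi}/\sqrt v(\tau)\|^2\mathrm{d}\tau$ on the right of (\ref{2ndEstimate'BL+}). This generates, besides the benign contribution $CM^2a^{-1}h(m,M,\phi_0,\psi_0)$ which is precisely what figures on the right of the desired (\ref{2ndEstimate''BL+}), a term $CM^2a^{-1}(m^{\gamma-1}+M^{\gamma-1})\int_0^t\!\!\int_0^\infty \tilde v_{\xi}^2/(v^\gamma\tilde v^2)\,\mathrm{d}\xi\,\mathrm{d}\tau$ whose integrand is exactly the one appearing in the second term on the left of (\ref{2ndEstimate'BL+}). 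Since $M^2a^{-1}(m^{\gamma-1}+M^{\gamma-1})\le g_0\le c_0$ and the constant $C$ here is uniform in $T$, $m$, $M$ and $a$, choosing $c_0$ small enough (and still uniform in those parameters) makes $CM^2a^{-1}(m^{\gamma-1}+M^{\gamma-1})\le\tfrac12$, so this term is absorbed into the left-hand side by the elementary ``$X\le A+\tfrac12 X\Rightarrow X\le 2A$'' argument; what remains on the right, $\|\tilde v_{0\xi}/\tilde v_0\|^2+a\|(\sqrt{\Phi_0},\psi_0)\|^2+M^2a^{-1}h(m,M,\phi_0,\psi_0)$, is exactly (\ref{2ndEstimate''BL+}).

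Finally, with (\ref{2ndEstimate''BL+}) now available, I insert its bound on $\int_0^t\!\!\int_0^\infty \tilde v_{\xi}^2/(v^\gamma\tilde v^2)\,\mathrm{d}\xi\,\mathrm{d}\tau$ back into the right-hand side of (\ref{3rdEBL+}). This replaces the dissipation integral there by $(m^{\gamma-1}+M^{\gamma-1})\big(\|\tilde v_{0\xi}/\tilde v_0\|^2+a\|(\sqrt{\Phi_0},\psi_0)\|^2+M^2a^{-1}h(m,M,\phi_0,\psi_0)\big)$, and since $(m^{\gamma-1}+M^{\gamma-1})M^2a^{-1}\le g_0\le c_0\lesssim 1$ the extra piece $(m^{\gamma-1}+M^{\gamma-1})M^2a^{-1}h$ is controlled by the $h$ term already present, which yields (\ref{3rdEstimate'BL+}). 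I expect no genuine obstacle in this argument; the one point needing care is the bookkeeping of constants, i.e.\ checking that every $\lesssim$ used in deriving (\ref{1stEstimateBL+})--(\ref{3rdEBL+}) is uniform in $T$, $m$, $M$ and $a$ --- it is exactly this uniformity that allows $c_0$ to be chosen independently of those parameters and hence legitimizes the two absorption steps above.
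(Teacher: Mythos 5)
Your proposal is correct and follows essentially the same route as the paper: the paper obtains the lemma precisely by plugging (\ref{3rdEBL+}) into (\ref{2ndEstimate'BL+}), absorbing the resulting $M^2a^{-1}(m^{\gamma-1}+M^{\gamma-1})$-multiple of the dissipation integral via the smallness encoded in $g_0\leq c_0$ (which also supplies the condition $M^2\|(\sqrt{\Phi_0},\psi_0)\|^4\leq c$ needed for (\ref{3rdEBL+})), and then feeding the result back to get (\ref{3rdEstimate'BL+}). Your write-up merely makes the absorption and uniformity-of-constants bookkeeping explicit, which the paper leaves implicit.
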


\noindent
\textbf{Proof of Theorem 1.}
Without loss of generality, we assume that $\epsilon\leq 1$.
First, we note from (\ref{Phiphi}), (\ref{log(tildeofv)}) and the initial conditions (\ref{ic1}) that
$${\left\|(\sqrt{\Phi_0},\psi_0)\right\|}\leq C\epsilon^{\alpha-\frac{\gamma+1}{2}l},\quad
\left\|\frac{\tilde{v}_{0\xi}}{\tilde{v}_0}\right\|\leq C\epsilon^{-l}(\|\phi_{0\xi}\|+\|V_{\xi}\phi\|)
\leq C\epsilon^{-l-\beta}.$$
Since $({\phi}_0,{\psi}_0)\in H_0^1(\mathbb{R}_{+})$,
we apply Lemma 2.2 to find $t_0>0$ such that
the problem (\ref{BLperturbation}) has a unique solution $(\phi,\psi)\in X_{1/m_0,M_0}(0,t_0)$
with $m_0,M_0\lesssim\epsilon^{-l}$.
Then we find that for $a=\epsilon^{-2\alpha-2\beta+(\gamma-1)l}>1$,
$$g_0\left(a,m_0,M_0,{\phi}_0,{\psi}_0\right)\lesssim
\epsilon^{4\alpha-2(\gamma+2)l}+\epsilon^{2\alpha+2\beta-2\gamma l}.$$
Hence if $(\ref{indexBL+})_1$ holds, there exists $\epsilon_1>0$ such that
(\ref{conditionBL+}) holds for each $0<\epsilon \leq \epsilon_1$. Furthermore, we have that if
$(\ref{indexBL+})_1$ holds,
$$h(m_0,M_0,\phi_0,\psi_0)\lesssim
\epsilon^{-2\beta}+\left[\epsilon^{-(2\gamma+2)l}+\epsilon^{4\alpha-(4\gamma+8)l}+\epsilon^{-4l-4\beta}\right]
    \epsilon^{2\alpha-(\gamma+1)l}\lesssim \epsilon^{-2\alpha-4\beta+(\gamma-1)l}.$$
Clearly we conclude from Lemma 3.3 that
for each $0<\epsilon \leq \epsilon_1$,
    $$
       \left\|\frac{\tilde{v}_{\xi}}{\tilde{v}}(t)\right\|^2
      +\int_0^t\int_0^{\infty}\frac{\tilde{v}_{\xi}^2}{v^{\gamma}\tilde{v}^2}\mathrm{d}\xi \mathrm{d}\tau
      \lesssim \epsilon^{-2l-2\beta}.
    $$
To get the upper and lower bounds for the specific volume $v$, we follow \cite{M-N2000} and introduce
\begin{equation}\label{Psi}
  \Psi(\tilde{v}):=\int_1^{\tilde{v}}\frac{\sqrt{\tilde{\Phi}(\eta)}}{\eta}\mathrm{d}\eta,
\end{equation}
where $\tilde{\Phi}$ is defined by (\ref{tildeofPhi}).
Then we have from $\tilde{v}(t_0,\infty)=1$ and Lemma 3.1 that
\begin{equation} \label{boundofPsi}
  \begin{aligned}
    \left|\Psi(\tilde{v}(t_0,\xi))\right|
    =\left|\int_{\infty}^{\xi}\Psi_{\xi}(\tilde{v}(t_0,\xi))\mathrm{d}\xi\right|
        \leq \left\| \sqrt{\tilde{\Phi}}(t_0) \right\| \left\| \frac{\tilde{v}_{\xi}}{\tilde{v}}(t_0) \right\|
    \lesssim \epsilon^{\theta},
  \end{aligned}
\end{equation}
where $\theta=\alpha-\beta-(\gamma+3)l/2.$
Since $\Psi(\tilde{v})=O(\tilde{v}^{1/2})$ as $\tilde{v}\to \infty$ and
$\Psi(\tilde{v})=O(\tilde{v}^{(1-\gamma)/2})$ as $\tilde{v}\to 0+$,
(\ref{boundofPsi}) yields
\begin{equation} \label{boundofvBL+}
  \epsilon^{{2\theta}/(1-\gamma)}\lesssim v (t_0,\xi)\lesssim\epsilon^{2\theta},\quad\forall\ \xi\in\mathbb{R}_+.
\end{equation}

Since $(\ref{indexBL+})_2$ implies $2\theta/(\gamma-1)\leq 0$ and $ 2\theta \leq 0$,
we exploit Lemma 2.2 again and recall (\ref{boundofvBL+}) to find $t_1>0$
such that (\ref{BLperturbation}) has a unique solution $(\phi,\psi)\in X_{1/m_1,M_1}(0,t_0+t_1)$,
where $m_1 \lesssim \epsilon^{2\theta/(\gamma-1)}$ and $ M_1 \lesssim \epsilon ^{2\theta}$.
Note that $\theta\leq 0$ implies that
${\epsilon^{4\theta}}\geq\epsilon^{\frac{4\gamma+4}{\gamma-1}\theta}$.
Thus we have
$$h(m_1,M_1,\phi(t_0),\psi(t_0))\lesssim \epsilon^{-2\beta}+\left[\epsilon^{\frac{4\gamma+4}{\gamma-1}\theta}
    +\epsilon^{\frac{8\theta}{\gamma-1}+(4\gamma+4)\theta+4\alpha-2(\gamma+1)l}+\epsilon^{-4l-4\beta}\right]
    \epsilon^{2\alpha-(\gamma+1)l},$$
and therefore the right-hand side of (\ref{2ndEstimate''BL+}) is bounded by $C\epsilon^{-2l-2\beta}$.
 By elementary calculations, we conclude from $(\ref{indexBL+})_{1,3,4}$ that
   $$h(m_1,M_1,\phi(t_0),\psi(t_0))\lesssim \epsilon^{-4\theta-2\alpha-4\beta+(\gamma-3)l}.$$
We recall $a=\epsilon^{-2\alpha-2\beta+(\gamma-1)l}>1$ to deduce
$$g_0(a,m_1,M_1,\phi(t_0),\psi(t_0))
\lesssim \epsilon^{4\theta+4\alpha-2(\gamma+1)l}+\epsilon^{6\theta+2\alpha+2\beta-(\gamma-1) l}
+\epsilon^{(2\gamma+2)\theta+2\alpha+2\beta-(\gamma-1) l}.$$
Then we have from $(\ref{indexBL+})_4$ that there exists $\epsilon_0>0$ such that
(\ref{conditionBL+}) holds for each $\epsilon \leq \epsilon_0$.
Combining Lemma 2.2 and the continuation process,
we can prove (\ref{BL}) has the global-in-time solution
$(\phi,\psi)\in X_{1/m_1,M_1}(0,\infty)$
satisfying
\begin{equation}
  \label{estimate}
  \left\|(\phi,\psi)(t)\right\|^2+\int_0^t\|(|U_{\xi}|^{1/2}\phi,\phi_{\xi},\psi,\psi_{\xi})(\tau)\|^2\mathrm{d}\tau
  \leq C,\quad \forall\ t\in\mathbb{R}_+,
\end{equation}
the constant $C$ depending only on $\epsilon$.
And so the asymptotic behavior of the solution (\ref{largetime})
is concluded by employing Sobolev's inequality.$\quad\quad \Box$

\subsection{Proof of Theorem 2}
In this subsection,
we assume that $(v_-,u_-)\in \Omega_{sub}$, $(v_+,u_+)\in BL_{-}(v_-,u_-)$ and
(\ref{BLperturbation}) has a solution
$(\phi,\psi)\in X_{1/m,M}(0,T)$ satisfying (\ref{Equality}) for some $T>0$ and each $0\leq t\leq T$.
As in the proof of Theorem 1, $c$ and $C$ are used to denote some positive constants
independent of $T,\ m,\ M$ and $\delta$ and the notation $A\lesssim B$ stands for that
$A\leq CB$ holds uniformly for some positive constant independent of $T,\ m,\ M$ and $\delta$.
Without loss of generality, we choose $m$ and $M$ such that $m,M\geq 1$. 

The basic energy estimate is stated as follows.
\begin{Lemma}
If $\delta$ is suitably small,
then it holds that for each $0\leq t\leq T$,
\begin{equation} \label{1stEstimateBL-}
\begin{aligned}
  &\left\|(\sqrt{\Phi},\psi)(t)\right\|^2
  +\int^t_0\int_0^{\infty}\left[\frac{{\psi}^2_{\xi}}{v}+
  \frac{|U_{\xi}\phi{\psi}_{\xi}|}{v}+|U_{\xi}|(p(v)-p(V)-p'(V)\phi)\right]\mathrm{d}\xi \mathrm{d}\tau\\[2mm]
  \lesssim ~&{\left\|(\sqrt{\Phi_0},\psi_0)\right\|}^2+\delta m^{\gamma+2}\int_0^{t}\|\phi_{\xi}(\tau)\|^2\mathrm{d}\tau,
\end{aligned}
\end{equation}
where $\Phi=\Phi(v,V)$ is defined by (\ref{Phi}) and $\delta=|u_+-u_-|$ is the strength of the boundary layer
$(V,U)$.
\end{Lemma}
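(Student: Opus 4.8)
The plan is to imitate the proof of Lemma~3.1, but to abandon the quadratic‑form positivity that made the basic estimate self‑contained in the increasing case. Indeed, since $(V,U)$ is now decreasing, Lemma~2.1 gives $U_\xi\le 0$, so in the divergence identity \eqref{BLdivergence1} the term $U_\xi\big(p(v)-p(V)-p'(V)\phi\big)=-|U_\xi|f(v,V)\phi^2$ is negative and can no longer be combined with the other quadratic terms into a positive form as in the proof of Lemma~3.1 (where the discriminant bound \eqref{D} was used). Hence the best one can expect is a basic estimate that is not closed in itself: the price of the bad term is precisely the extra quantity $\delta m^{\gamma+2}\int_0^t\|\phi_\xi(\tau)\|^2\mathrm{d}\tau$ on the right‑hand side, which will be absorbed at a later stage of the proof of Theorem~2 against the dissipation.

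So first I would start from \eqref{BLdivergence1}, whose derivation (multiply \eqref{BLperturbation}$_1$, \eqref{BLperturbation}$_2$ by $p(V)-p(v)$, $\psi$ and add) uses nothing about the sign of $U_\xi$ and therefore holds verbatim here. Integrating over $(0,t)\times(0,\infty)$, the boundary terms at $\xi=0$ all vanish because $\phi(\tau,0)=\psi(\tau,0)=0$ — so $\Phi(v(\tau,0),V(0))=0$ and every flux term in \eqref{BLdivergence1} carries a factor $\psi(\tau,0)$ — while the flux at $\xi=+\infty$ vanishes since $(\phi,\psi)(\tau)\in H^1(\mathbb{R}_+)$. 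Writing $U_\xi=-|U_\xi|$ and $p(v)-p(V)-p'(V)\phi=f(v,V)\phi^2\ge 0$ as in \eqref{f}, and adding $\int_0^t\!\int_0^\infty|U_\xi|f(v,V)\phi^2$ to both sides, the integrated identity becomes
\[
\left\|(\sqrt{\Phi},\psi)(t)\right\|^2+\int_0^t\!\!\int_0^\infty\!\left[\mu\frac{\psi_\xi^2}{v}+\mu\frac{|U_\xi|\phi\psi_\xi}{vV}+|U_\xi|f(v,V)\phi^2\right]\!\mathrm{d}\xi\,\mathrm{d}\tau=\left\|(\sqrt{\Phi_0},\psi_0)\right\|^2+2\int_0^t\!\!\int_0^\infty\!|U_\xi|f(v,V)\phi^2\,\mathrm{d}\xi\,\mathrm{d}\tau.
\]
Next I would bound the non‑sign‑definite cross term from below by Cauchy's inequality, using $v_+\le V\le v_-$, the decay estimate $|U_\xi|\le C\delta e^{-c\xi}$ from \eqref{BLdecay} (applicable because $(v_-,u_-)\in\Omega_{sub}$ and $(v_+,u_+)\in BL_-(v_-,u_-)$ force $v_+<v_-<v_*$), and the pointwise inequality $1/v\lesssim f(v,V)$ that follows from \eqref{fvV}; this yields $\mu|U_\xi|\phi\psi_\xi/(vV)\ge-\tfrac{\mu}{2}\psi_\xi^2/v-C\delta\,|U_\xi|f(v,V)\phi^2$. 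Choosing $\delta$ small enough to absorb the last term into $|U_\xi|f(v,V)\phi^2$ then leaves, on the left‑hand side, a positive multiple of $\int_0^t\!\int_0^\infty\big(\psi_\xi^2/v+|U_\xi|f(v,V)\phi^2\big)$.

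It remains — and this is the crux — to control the leftover term $2\int_0^t\!\int_0^\infty|U_\xi|f(v,V)\phi^2$ on the right: it cannot be absorbed, being up to a constant the very term we keep on the left, so it must be estimated by $\delta m^{\gamma+2}\int_0^t\|\phi_\xi(\tau)\|^2\mathrm{d}\tau$. For this I would combine three facts: (i) the Taylor bound $f(v,V)\le C m^{\gamma+2}$, valid since $v,V\ge 1/m$ forces $|p''|\le Cm^{\gamma+2}$ on the relevant range — this is exactly where the factor $m^{\gamma+2}$ enters; (ii) the exponential localization $|U_\xi|\le C\delta e^{-c\xi}$; and (iii) the Dirichlet condition $\phi(\tau,0)=0$, which gives $\phi(\tau,\xi)^2\le\xi\|\phi_\xi(\tau)\|^2$ and hence $\int_0^\infty|U_\xi|\phi^2\,\mathrm{d}\xi\le C\delta\|\phi_\xi(\tau)\|^2\int_0^\infty\xi e^{-c\xi}\,\mathrm{d}\xi\lesssim\delta\|\phi_\xi(\tau)\|^2$; together they give $2\int_0^t\!\int_0^\infty|U_\xi|f(v,V)\phi^2\lesssim\delta m^{\gamma+2}\int_0^t\|\phi_\xi(\tau)\|^2\mathrm{d}\tau$. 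Finally, to recover $\int_0^t\!\int_0^\infty|U_\xi\phi\psi_\xi|/v$ on the left‑hand side of \eqref{1stEstimateBL-} one simply notes $|U_\xi\phi\psi_\xi|/v\le\tfrac12\psi_\xi^2/v+\tfrac12 U_\xi^2\phi^2/v\le\tfrac12\psi_\xi^2/v+C\delta|U_\xi|f(v,V)\phi^2$, so adding it on the left only enlarges the implied constant, and \eqref{1stEstimateBL-} follows. The main obstacle is really the bookkeeping in this last step: one must verify that the constants in $|U_\xi|\le C\delta e^{-c\xi}$ and in $1/v\lesssim f(v,V)$ depend only on $(v_-,u_-)$ and $\gamma$, not on $m$ or $M$, so that it is only $\delta$ — not $\delta m$ or $\delta M$ — that needs to be taken small in the absorption steps, with all the $m$‑growth confined to the single term $\delta m^{\gamma+2}\int_0^t\|\phi_\xi(\tau)\|^2\mathrm{d}\tau$.
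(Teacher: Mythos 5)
Your proposal is correct and follows essentially the same route as the paper: integrate the divergence identity \eqref{BLdivergence1}, absorb the sign-indefinite cross term using $|U_\xi|\lesssim\delta$ together with $f(v,V)\gtrsim V^{-\gamma-1}/v$ (you do this by Cauchy's inequality, the paper by the negative-discriminant condition, which is the same estimate in different clothing), and control the leftover term $\int_0^t\!\int_0^\infty|U_\xi|(p(v)-p(V)-p'(V)\phi)$ exactly as in the paper via the Nikkuni--Kawashima inequality $\phi^2\le\xi\|\phi_\xi\|^2$, the exponential decay of $U_\xi$, and $f(v,V)\lesssim m^{\gamma+2}$. The only blemish is a wording slip (you must add $2\int\!\int|U_\xi|f\phi^2$ to both sides, as your displayed identity in fact reflects), which does not affect correctness.
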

\begin{proof}
First we recall
$(v_+,u_+)\in BL_{-}(v_-,u_-)$
and thus we have $U_{\xi}<0$.
Noting that (\ref{fvV}) and (\ref{BLdecay}) hold,
we conclude that
if $\delta$ is suitably small,
the discriminant $D$ of
$$\mu\frac{\psi_{\xi}^2}{v}-\mu\frac{U_{\xi}\phi\psi_{\xi}}{vV}+|U_{\xi}|(p(v)-p(V)-p'(V)\phi)$$
satisfies
$$D=\frac{\mu |U_{\xi}|}{V^2vf(v,V)}-4\leq \frac{C\delta}{\gamma V^{-\gamma+1}}-4<0,$$
where $f(v,V)$ is defined by (\ref{f}).
Then we integrate (\ref{BLdivergence1}) over $(0,t)\times(0,\infty)$ to find that
\begin{equation} \label{1stEstBL-}
\begin{aligned}
  &\left\|(\sqrt{\Phi},\psi)(t)\right\|^2
  +\int^t_0\int_0^{\infty}\left[\frac{{\psi}^2_{\xi}}{v}+
  \frac{|U_{\xi}\phi{\psi}_{\xi}|}{v}+|U_{\xi}|(p(v)-p(V)-p'(V)\phi)\right]\mathrm{d}\xi \mathrm{d}\tau\\[2mm]
  \lesssim ~&{\left\|(\sqrt{\Phi_0},\psi_0)\right\|}^2+
  \int_0^{t}\int_0^{\infty}|U_{\xi}|(p(v)-p(V)-p'(V)\phi)\mathrm{d}\xi \mathrm{d}\tau.
\end{aligned}
\end{equation}
To estimate the last term of (\ref{1stEstBL-}),
we apply  the idea in Nikkuni and Kawashima~\cite{Nikkuni-Kawashima}, i.e.,
\begin{equation} \label{kawashimaTech}
\phi(t,\xi)=\phi(t,0)+\int^\xi_0{\phi_\xi(t,\eta)}\mathrm{d}\eta\leq \xi^{\frac{1}{2}}\|\phi_\xi(t)\|.
\end{equation}
Since
\begin{equation}
  p(v)-p(V)-p'(V)\phi=\int_{0}^1\int_{0}^1p''(\theta_1\theta_2v+(1-\theta_1\theta_2)V)\theta_1\mathrm{d}\theta_2
  \mathrm{d}\theta_1\phi^2\lesssim m^{\gamma+2}\phi^2,
\end{equation}
we deduce from $|U_{\xi}(\xi)|\lesssim \delta e^{-c\xi}$ and (\ref{kawashimaTech}) that
\begin{equation} \label{3.2.1}
  \begin{aligned}
  \int_0^{t}\int_0^{\infty}|U_{\xi}|(p(v)-p(V)-p'(V)\phi)\mathrm{d}\xi \mathrm{d}\tau
  \lesssim  \delta m^{\gamma+2}\int_0^{t}\int_0^{\infty}  e^{-c\xi}\phi^2 \mathrm{d}\xi \mathrm{d}\tau
    \lesssim \delta m^{\gamma+2}\int_0^{t}\|\phi_{\xi}(\tau)\|^2\mathrm{d}\tau.
  \end{aligned}
\end{equation}
We then substitute (\ref{3.2.1}) into (\ref{1stEstBL-}) to find (\ref{1stEstimateBL-}).
\end{proof}
We now estimate the last term of (\ref{1stEstimateBL-}). For this,
we apply $|V_{\xi}(\xi)|\lesssim \delta e^{-c\xi}$ and (\ref{kawashimaTech}) to conclude
\begin{equation} \label{3.2.8}
   \int_0^t\int_0^{\infty}V_{\xi}^2\phi^2\mathrm{d}\xi \mathrm{d}\tau
  \lesssim\delta^2\int_0^t\|\phi_{\xi}(\tau)\|^2\mathrm{d}\tau.
\end{equation}
Then, we obtain from (\ref{log(tildeofv)}) that
\begin{equation}
  \int_0^t\|\phi_{\xi}(\tau)\|^2\mathrm{d}\tau
  \lesssim M^{\gamma+2}\int_0^t\int_0^{\infty}\frac{\tilde{v}_{\xi}^2}{v^{\gamma}\tilde{v}^2}\mathrm{d}\xi \mathrm{d}\tau
  +\delta^2\int_0^t\|\phi_{\xi}(\tau)\|^2\mathrm{d}\tau.
\end{equation}
Consequently if $\delta$ is sufficiently small,
\begin{equation} \label{3.2.3}
\begin{aligned}
  \int_0^t\|\phi_{\xi}(\tau)\|^2\mathrm{d}\tau
  \lesssim M^{\gamma+2}\int_0^t\int_0^{\infty}\frac{\tilde{v}_{\xi}^2}{v^{\gamma}\tilde{v}^2}\mathrm{d}\xi \mathrm{d}\tau.
\end{aligned}
\end{equation}
Plugging (\ref{3.2.3}) into (\ref{1stEstimateBL-}) gives that if $\delta$ is sufficiently small, it holds that
\begin{equation} \label{1BL-}
\begin{aligned}
  &\left\|(\sqrt{\Phi},\psi)(t)\right\|^2
  +\int^t_0\int_0^{\infty}\left[\frac{{\psi}^2_{\xi}}{v}+
  \frac{|U_{\xi}\phi{\psi}_{\xi}|}{v}+|U_{\xi}|(p(v)-p(V)-p'(V)\phi)\right]\mathrm{d}\xi \mathrm{d}\tau\\[2mm]
  \lesssim ~&{\left\|(\sqrt{\Phi_0},\psi_0)\right\|}^2+\delta m^{\gamma+2}M^{\gamma+2}\int_0^t\int_0^{\infty}\frac{\tilde{v}_{\xi}^2}{v^{\gamma}\tilde{v}^2}\mathrm{d}\xi \mathrm{d}\tau.
\end{aligned}
\end{equation}
Similar to proving Lemma 3.2, we obtain from (\ref{1BL-}) that
if $\delta$ is suitably small,
\begin{equation*} \label{2ndEstimate1BL-}
  \begin{aligned}
    \left\|\frac{\tilde{v}_{\xi}}{\tilde{v}}(t)\right\|^2+
    \int_0^t\int_0^{\infty}\frac{\tilde{v}_{\xi}^2}{v^{\gamma}\tilde{v}^2}\mathrm{d}\xi \mathrm{d}\tau
    \lesssim \left\|\left(\sqrt{\Phi_0},\psi_0,\frac{\tilde{v}_{0\xi}}{\tilde{v}_0}\right)\right\|^2
    +\delta m^{\gamma+2}M^{\gamma+2}\int_0^t\int_0^{\infty}
    \frac{\tilde{v}_{\xi}^2}{v^{\gamma}\tilde{v}^2}\mathrm{d}\xi \mathrm{d}\tau
    +\int_0^t \psi_{\xi}^2(\tau,0)\mathrm{d}\tau,
  \end{aligned}
\end{equation*}
which implies that if $\delta m^{\gamma+2} M^{\gamma+2}$ is sufficiently small,
\begin{equation} \label{2BL-}
  \begin{aligned}
    \left\|\frac{\tilde{v}_{\xi}}{\tilde{v}}(t)\right\|^2+
    \int_0^t\int_0^{\infty}\frac{\tilde{v}_{\xi}^2}{v^{\gamma}\tilde{v}^2}\mathrm{d}\xi \mathrm{d}\tau
    \lesssim \left\|\left(\sqrt{\Phi_0},\psi_0,\frac{\tilde{v}_{0\xi}}{\tilde{v}_0}\right)\right\|^2
    +\int_0^t \psi_{\xi}^2(\tau,0)\mathrm{d}\tau.
  \end{aligned}
\end{equation}
Now we substitute (\ref{boundaryterm2}) 
into (\ref{2BL-})
to conclude the following lemma.
\begin{Lemma}
  There exists a constant $c_1$independent of $T,\ m,\ M$, $\delta$ and $a$,
such that if
\begin{equation} \label{condition1BL-}
  g_1(\delta,a,m,M):=a\delta m^{\gamma+2} M^{\gamma+2}\leq c_1,
\end{equation}
then it holds for each $a>1$ and $0\leq t\leq T$,
\begin{equation} \label{2ndEstimate3BL-}
  \begin{aligned}
    \left\|\frac{\tilde{v}_{\xi}}{\tilde{v}}(t)\right\|^2+
    \int_0^t\int_0^{\infty}\frac{\tilde{v}_{\xi}^2}{v^{\gamma}\tilde{v}^2}\mathrm{d}\xi \mathrm{d}\tau
    \lesssim\left\|\frac{\tilde{v}_{0\xi}}{\tilde{v}_0}\right\|^2
    +a{\left\|(\sqrt{\Phi_0},\psi_0)\right\|}^2
    +M^2a^{-1}
    \int_0^t\left\|\frac{\psi_{\xi\xi}}{\sqrt{v}}(\tau)\right\|^2\mathrm{d}\tau
  \end{aligned}
  \end{equation}
  and
  \begin{equation} \label{1stEstimate2BL-}
  \begin{aligned}
    &\left\|(\sqrt{\Phi},\psi)(t)\right\|^2
    +\int^t_0\int_0^{\infty}\left[\frac{{\psi}^2_{\xi}}{v}
    +\frac{|U_{\xi}\phi{\psi}_{\xi}|}{v}+|U_{\xi}|(p(v)-p(V)-p'(V)\phi)\right]\mathrm{d}\xi \mathrm{d}\tau\\[2mm]
    \lesssim ~&{\left\|(\sqrt{\Phi_0},\psi_0)\right\|}^2
    +\delta m^{\gamma+2} M^{\gamma+2}\left\|\frac{\tilde{v}_{0\xi}}{\tilde{v}_0}\right\|^2
    +M^2a^{-1}\delta m^{\gamma+2} M^{\gamma+2}
    \int_0^t\left\|\frac{\psi_{\xi\xi}}{\sqrt{v}}(\tau)\right\|^2\mathrm{d}\tau.
  \end{aligned}
  \end{equation}
\end{Lemma}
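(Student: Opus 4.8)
The plan is purely to chain together the estimates already established: the refined dissipation bound \eqref{2BL-}, the trace inequality \eqref{boundaryterm2}, and the basic energy estimate in the form \eqref{1BL-}. The only genuine input is the smallness condition \eqref{condition1BL-}, which is used to absorb, into the left-hand side, the single term on the right that still carries a factor of the dissipation functional $\int_0^t\!\!\int_0^\infty \frac{\tilde v_\xi^2}{v^\gamma\tilde v^2}\,\mathrm{d}\xi\mathrm{d}\tau$.

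I would begin by recording that, since $a>1$ and $m,M\ge 1$, the hypothesis $g_1(\delta,a,m,M)=a\delta m^{\gamma+2}M^{\gamma+2}\le c_1$ forces both $\delta\le c_1$ and $\delta m^{\gamma+2}M^{\gamma+2}\le c_1$. Hence, on choosing $c_1$ smaller than all the absolute thresholds appearing in the derivations of \eqref{3.2.3}, \eqref{1BL-} and \eqref{2BL-}, these three estimates are all at our disposal. In particular, \eqref{1BL-} gives $\int_0^t\|\psi_\xi/\sqrt v\|^2\,\mathrm{d}\tau \lesssim \|(\sqrt{\Phi_0},\psi_0)\|^2 + \delta m^{\gamma+2}M^{\gamma+2}\int_0^t\!\!\int_0^\infty \frac{\tilde v_\xi^2}{v^\gamma\tilde v^2}\,\mathrm{d}\xi\mathrm{d}\tau$.

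Next I would substitute \eqref{boundaryterm2} into \eqref{2BL-}, which replaces $\int_0^t\psi_\xi^2(\tau,0)\,\mathrm{d}\tau$ by $a\int_0^t\|\psi_\xi/\sqrt v\|^2\,\mathrm{d}\tau + 4M^2a^{-1}\int_0^t\|\psi_{\xi\xi}/\sqrt v\|^2\,\mathrm{d}\tau$. The $\psi_{\xi\xi}$-term is left untouched, since it is precisely the last term of \eqref{2ndEstimate3BL-}; into the $\psi_\xi$-term I insert the bound from the previous paragraph, which contributes a harmless $Ca\|(\sqrt{\Phi_0},\psi_0)\|^2$ together with a term $C\,a\delta m^{\gamma+2}M^{\gamma+2}\int_0^t\!\!\int_0^\infty \frac{\tilde v_\xi^2}{v^\gamma\tilde v^2} = Cg_1\int_0^t\!\!\int_0^\infty \frac{\tilde v_\xi^2}{v^\gamma\tilde v^2}$. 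Since we work with $(\phi,\psi)\in X_{1/m,M}(0,T)$ on the bounded interval $[0,T]$, this dissipation functional is a priori finite, so if $c_1$ is chosen with $Cc_1\le 1/2$ I may transfer $Cg_1\int_0^t\!\!\int_0^\infty \frac{\tilde v_\xi^2}{v^\gamma\tilde v^2}$ to the left-hand side of \eqref{2BL-}. What remains, after also using $a>1$ so that $a\|(\sqrt{\Phi_0},\psi_0)\|^2$ dominates $\|(\sqrt{\Phi_0},\psi_0)\|^2$, is exactly \eqref{2ndEstimate3BL-}.

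Finally, for \eqref{1stEstimate2BL-} I would return to \eqref{1BL-} and substitute the just-proven bound \eqref{2ndEstimate3BL-} for $\int_0^t\!\!\int_0^\infty \frac{\tilde v_\xi^2}{v^\gamma\tilde v^2}$ into its right-hand side. This produces three contributions carrying the prefactor $\delta m^{\gamma+2}M^{\gamma+2}$: the term $\delta m^{\gamma+2}M^{\gamma+2}\|\tilde v_{0\xi}/\tilde v_0\|^2$ and the term $M^2a^{-1}\delta m^{\gamma+2}M^{\gamma+2}\int_0^t\|\psi_{\xi\xi}/\sqrt v\|^2\,\mathrm{d}\tau$, both of which are kept, and the term $a\delta m^{\gamma+2}M^{\gamma+2}\|(\sqrt{\Phi_0},\psi_0)\|^2 = g_1\|(\sqrt{\Phi_0},\psi_0)\|^2\le c_1\|(\sqrt{\Phi_0},\psi_0)\|^2$, which merges into the $\|(\sqrt{\Phi_0},\psi_0)\|^2$ already on the right. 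Collecting terms gives \eqref{1stEstimate2BL-}, and the lemma follows. The one point requiring real care is the bookkeeping of the powers of $m$ and $M$: one must check that the coefficient of the term to be absorbed is \emph{exactly} $g_1=a\delta m^{\gamma+2}M^{\gamma+2}$, rather than a larger power of $M$, and that the threshold $c_1$ can be fixed once and for all, independently of $T$, $m$, $M$, $\delta$ and $a$.
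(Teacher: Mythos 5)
Your proposal is correct and follows essentially the same route as the paper: substitute the trace bound \eqref{boundaryterm2} into \eqref{2BL-}, control $a\int_0^t\|\psi_\xi/\sqrt v\|^2\mathrm{d}\tau$ via the basic estimate \eqref{1BL-}, absorb the resulting $g_1$-weighted dissipation term using \eqref{condition1BL-}, and then feed the proven bound \eqref{2ndEstimate3BL-} back into \eqref{1BL-} to obtain \eqref{1stEstimate2BL-}. Your explicit remarks on why the prerequisite smallness conditions hold (since $a>1$, $m,M\ge 1$) and on the finiteness needed for the absorption are just the bookkeeping the paper leaves implicit.
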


Next, we will use (\ref{3rdEstimateBL+}) to estimate the term
$\int_0^t\|\frac{\psi_{\xi\xi}}{\sqrt{v}}(\tau)\|^2\mathrm{d}\tau$.
We employ (\ref{3.2.8}) and (\ref{3.2.3}) to obtain
\begin{equation}
  \label{JBL-}
    \begin{aligned}
  \int_0^t\int_0^{\infty}J\phi^2\mathrm{d}\xi \mathrm{d}\tau
  \lesssim\int_0^t\int_0^{\infty}(v^{-2\gamma-3}+v)V_{\xi}^2\phi^2 \mathrm{d}\xi \mathrm{d}\tau
    \lesssim(m^{2\gamma+3}+M)\delta^2M^{\gamma+2}\int_0^t
    \int_0^{\infty}\frac{\tilde{v}_{\xi}^2}{v^{\gamma}\tilde{v}^2}\mathrm{d}\xi \mathrm{d}\tau.
  \end{aligned}
\end{equation}
We have from (\ref{1BL-}) that
\begin{equation} \label{3.2.6}
  \begin{aligned}
  \int_0^t\int_0^{\infty}m^2V_{\xi}^2\frac{\psi_{\xi}^2}{v}\mathrm{d}\xi \mathrm{d}\tau
    \lesssim\delta^{2}m^2{\left\|(\sqrt{\Phi_0},\psi_0)\right\|}^2
    +\delta^{3}m^{\gamma+4} M^{\gamma+2}\int_0^t\int_0^{\infty}
    \frac{\tilde{v}_{\xi}^2}{v^{\gamma}\tilde{v}^2}\mathrm{d}\xi \mathrm{d}\tau.
  \end{aligned}
\end{equation}
Substituting (\ref{JBL-}) and (\ref{3.2.6}) into (\ref{3rdEstimateBL+}), we conclude that
if (\ref{condition1BL-}) holds, then
\begin{equation} \label{3.2.9}
  \begin{aligned}
    \|\psi_{\xi}(t)\|^2+\int_0^t\left\|\frac{\psi_{\xi\xi}}{\sqrt{v}}(\tau)\right\|^2\mathrm{d}\tau
    \lesssim ~&\left\|(\sqrt{\Phi_0},\psi_0,\psi_{0\xi})\right\|^2\\
    &+m^{\gamma-1}\int_0^t\int_0^{\infty}\frac{\tilde{v}_{\xi}^2}{v^{\gamma}\tilde{v}^2}\mathrm{d}\xi \mathrm{d}\tau
    &+\int_0^t\left\|\frac{\phi_{\xi}}{v}\right\|^4\left\|\frac{\psi_{\xi}}{\sqrt{v}}\right\|^2\mathrm{d}\tau.
  \end{aligned}
\end{equation}
Then combinations of (\ref{2ndEstimate3BL-}) and (\ref{3.2.9})
give the following lemma.
\begin{Lemma}
  Assume that (\ref{condition1BL-}) hold.
  There exists a constant $c_2$ independent of $T,\ m,\ M$, $\delta$ and $a$, such that if
  \begin{equation} \label{condition2BL-}\begin{aligned}
    g_2(a,m,M):=
    a^{-1}m^{\gamma-1}M^2\leq c_2,
  \end{aligned}
  \end{equation}
  then it holds for each $a>1$ and $0\leq t\leq T$,
  \begin{equation} \label{3rdEstimateBL-}
  \begin{aligned}
    \|\psi_{\xi}(t)\|^2+\int_0^t\left\|\frac{\psi_{\xi\xi}}{\sqrt{v}}(\tau)\right\|^2\mathrm{d}\tau
    \lesssim ~&\|\psi_{0\xi}\|^2
    +m^{\gamma-1}a{\left\|(\sqrt{\Phi_0},\psi_0)\right\|}^2\\[2mm]
    &+m^{\gamma-1}\left\|\frac{\tilde{v}_{0\xi}}{\tilde{v}_0}\right\|^2
    +\int_0^t\left\|\frac{\phi_{\xi}}{v}\right\|^4\left\|\frac{\psi_{\xi}}{\sqrt{v}}\right\|^2\mathrm{d}\tau.
  \end{aligned}
\end{equation}
\end{Lemma}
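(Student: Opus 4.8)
The plan is to obtain (\ref{3rdEstimateBL-}) by feeding the volume-type estimate (\ref{2ndEstimate3BL-}) of Lemma 3.6 into the velocity-type estimate (\ref{3.2.9}) and then absorbing the leftover dissipation term on the left. Since the hypothesis of the lemma includes (\ref{condition1BL-}), both (\ref{2ndEstimate3BL-}) and (\ref{3.2.9}) are at our disposal. First I would substitute the bound (\ref{2ndEstimate3BL-}) into the term $m^{\gamma-1}\int_0^t\int_0^{\infty}\frac{\tilde v_{\xi}^2}{v^{\gamma}\tilde v^2}\,\mathrm{d}\xi\,\mathrm{d}\tau$ appearing on the right-hand side of (\ref{3.2.9}). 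This replaces that integral, up to a constant depending only on $(v_-,u_-)$, $\gamma$ and $\mu$, by $m^{\gamma-1}\big[\|\tilde v_{0\xi}/\tilde v_0\|^2+a\|(\sqrt{\Phi_0},\psi_0)\|^2\big]$ together with the harmful contribution $C\,m^{\gamma-1}M^2a^{-1}\int_0^t\|\psi_{\xi\xi}/\sqrt v(\tau)\|^2\,\mathrm{d}\tau$.

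The key step is then the absorption. After the substitution the right-hand side of (\ref{3.2.9}) carries the term $C\,m^{\gamma-1}M^2a^{-1}\int_0^t\|\psi_{\xi\xi}/\sqrt v(\tau)\|^2\,\mathrm{d}\tau$ with $C$ uniform in $T,m,M,\delta,a$. Choosing $c_2:=1/(2C)$, the assumed bound $g_2(a,m,M)=a^{-1}m^{\gamma-1}M^2\le c_2$ forces the coefficient $C\,m^{\gamma-1}M^2a^{-1}$ to be at most $1/2$, so $\tfrac12\int_0^t\|\psi_{\xi\xi}/\sqrt v(\tau)\|^2\,\mathrm{d}\tau$ can be moved to the left-hand side. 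To put the initial-data contributions in the form of (\ref{3rdEstimateBL-}), I would then use $m^{\gamma-1}\ge1$ (recall $\gamma\ge1$ and $m\ge1$) and $a>1$ to absorb $\|(\sqrt{\Phi_0},\psi_0)\|^2$ into $m^{\gamma-1}a\|(\sqrt{\Phi_0},\psi_0)\|^2$ and $\|\tilde v_{0\xi}/\tilde v_0\|^2$ into $m^{\gamma-1}\|\tilde v_{0\xi}/\tilde v_0\|^2$, splitting $\|(\sqrt{\Phi_0},\psi_0,\psi_{0\xi})\|^2=\|(\sqrt{\Phi_0},\psi_0)\|^2+\|\psi_{0\xi}\|^2$ beforehand; the remaining nonlinear term $\int_0^t\|\phi_{\xi}/v\|^4\|\psi_{\xi}/\sqrt v\|^2\,\mathrm{d}\tau$ is simply carried along unchanged, to be disposed of afterwards exactly as in the proof of Theorem 1.

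The main obstacle — really the only point demanding care — is the uniformity of the constant $c_2$. One must verify that no hidden dependence on $T,m,M,\delta,a$ has been swept into the $\lesssim$ symbols along the way: this requires tracing through the chain (\ref{3rdEstimateBL+}) $\to$ (\ref{JBL-}), (\ref{3.2.6}) $\to$ (\ref{3.2.9}), and through the derivation of (\ref{2ndEstimate3BL-}) in Lemma 3.6, to confirm that every power of $m,M,\delta,a$ has already been made explicit, so that the constants still inside $\lesssim$ depend only on $(v_-,u_-)$, $\gamma$ and $\mu$. Granting this bookkeeping, the substitution-plus-absorption argument described above yields (\ref{3rdEstimateBL-}) for all $a>1$ and $0\le t\le T$, which completes the proof.
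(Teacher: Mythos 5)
Your argument is exactly the paper's intended proof: the paper simply states that combining (\ref{2ndEstimate3BL-}) with (\ref{3.2.9}) yields the lemma, and your substitution of the volume estimate into the dissipation term of (\ref{3.2.9}), followed by absorbing the coefficient $Cm^{\gamma-1}M^2a^{-1}\le Cc_2\le\tfrac12$ into the left-hand side and using $m^{\gamma-1}\ge1$, $a>1$ to rearrange the initial-data terms, is precisely that combination made explicit. The constant bookkeeping you flag is indeed fine, since all powers of $m,M,\delta,a$ were made explicit in the derivations of (\ref{3.2.9}) and (\ref{2ndEstimate3BL-}).
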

We now need to estimate the last term in (\ref{3rdEstimateBL-}).
First substitute (\ref{2ndEstimate3BL-}) and (\ref{1stEstimate2BL-}) into (\ref{3.1.2}), and then recall
(\ref{condition2BL-}) to deduce
\begin{equation} \label{3.2.4}
  \begin{aligned}
    \left\|\frac{\phi_{\xi}}{v}(\tau)\right\|^2
    \lesssim  \left\|\frac{\tilde{v}_{0\xi}}{\tilde{v}_0}\right\|^2
    +a{\left\|(\sqrt{\Phi_0},\psi_0)\right\|}^2
    &+M^2a^{-1}\int_0^{\tau}\left\|\frac{\psi_{\xi\xi}}{\sqrt{v}}(s)\right\|^2\mathrm{d}s.
  \end{aligned}
\end{equation}
Applying the inequality
\begin{equation}
  (c+b)^3\leq 4(c^3+b^3)\quad \textrm{if}\ c,b\geq 0,
\end{equation}
and noting $\delta m^{\gamma+2}M^{\gamma+2}\lesssim a^{-1}$,
we discover from (\ref{3.2.4}) and (\ref{1stEstimate2BL-}) that
\begin{equation} \label{3.2.00}
  \begin{aligned}
    \int_0^t\left\|\frac{\phi_{\xi}}{v}(\tau)\right\|^4
    \left\|\frac{\psi_{\xi}}{\sqrt{v}}(\tau)\right\|^2\mathrm{d}\tau
    \lesssim ~& a^{-1}\left[
    \left\|\frac{\tilde{v}_{0\xi}}{\tilde{v}_0}\right\|^2
    +a{\left\|(\sqrt{\Phi_0},\psi_0)\right\|}^2\right]^3\\[2mm]
    &+a^{-1}\left[M^2a^{-1}\int_0^t\left\|\frac{\psi_{\xi\xi}}{\sqrt{v}}(\tau)\right\|^2
    \mathrm{d}\tau\right]^3.
  \end{aligned}
\end{equation}
To close the energy estimate (\ref{3rdEstimateBL-}),
we will employ the following result due to Strauss~\cite{Strauss}.
\begin{Lemma}
\label{Strauss}
Let $M(t)$ be an non-negative continuous function of $t$ satisfying the
inequality
\begin{equation}
  M(t)\lesssim A_1 + A_2 M(t)^{\kappa}
\end{equation}
in some interval containing $0$, where $A_1$ and $A_2$ are positive
constants and $\kappa>1$.
Then there is a positive constant $C$ such that if $ M(0)\lesssim A_1$ and
\begin{equation}
    A_1 A_2^{\frac{1}{\kappa-1}}< C (1-\kappa^{-1})\kappa^{-\frac{1}{\kappa-1}},
\end{equation}
then in the same interval
\begin{equation}
  M(t)\lesssim \frac{A_1}{1-\kappa^{-1}}.
\end{equation}
\end{Lemma}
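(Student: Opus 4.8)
The plan is a continuity (bootstrap) argument driven by the scalar function $F(y):=A_2 y^{\kappa}-y+A_1$ on $[0,\infty)$. First I would get rid of the implicit constants in $\lesssim$: writing the hypothesis as $M(t)\le C_0\bigl(A_1+A_2 M(t)^{\kappa}\bigr)$ and renaming $A_1\mapsto C_0A_1$, $A_2\mapsto C_0A_2$, it suffices to treat the clean inequality $M(t)\le A_1+A_2 M(t)^{\kappa}$ for all $t$ in the given interval $I\ni0$, which is the same as $F(M(t))\ge0$; since the constant $C$ in the statement is ours to choose, shrinking it compensates exactly for this renaming in the quantitative smallness condition.

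Next I would analyze $F$. As $\kappa>1$, $F$ is strictly convex, $F(0)=A_1>0$, and $F'(y)=\kappa A_2 y^{\kappa-1}-1$ vanishes only at $y_*:=(\kappa A_2)^{-1/(\kappa-1)}$, the global minimizer, where a short computation gives $F(y_*)=A_1-(1-\kappa^{-1})(\kappa A_2)^{-1/(\kappa-1)}$. The hypothesis $A_1A_2^{1/(\kappa-1)}<C(1-\kappa^{-1})\kappa^{-1/(\kappa-1)}$ is precisely (after the above renaming, for $C$ small) the statement $F(y_*)<0$. Hence $F$ has exactly two positive zeros $0<y_1<y_*<y_2$, with $F>0$ on $[0,y_1)\cup(y_2,\infty)$ and $F<0$ on $(y_1,y_2)$. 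Therefore the continuous map $t\mapsto M(t)$ takes values in the closed set $[0,y_1]\cup[y_2,\infty)$; since $I$ is an interval, hence connected, $M(I)$ is connected and so lies entirely in one of these two pieces.

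It remains to pin down which piece and to estimate $y_1$. From the smallness condition one has $A_1<(\kappa A_2)^{-1/(\kappa-1)}=y_*$, so $A_1$ sits to the left of the minimizer where $F$ is strictly decreasing; since $F(A_1)=A_2A_1^{\kappa}>0=F(y_1)$, strict monotonicity forces $A_1<y_1$. Combined with the hypothesis $M(0)\lesssim A_1$ (using the freedom in $C$ so that $M(0)\le A_1<y_1$), this rules out the branch $[y_2,\infty)$ and yields $M(t)\le y_1$ for all $t\in I$. Finally, because $y_1\le y_*$ we have $A_2 y_1^{\kappa-1}\le A_2 y_*^{\kappa-1}=\kappa^{-1}$, so the defining identity $y_1=A_1+A_2 y_1^{\kappa}$ gives $y_1\le A_1+\kappa^{-1}y_1$, i.e. $y_1\le A_1/(1-\kappa^{-1})$; restoring the absorbed constant produces the claimed bound $M(t)\lesssim A_1/(1-\kappa^{-1})$.

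The step I expect to need the most care is the constant bookkeeping together with the separation $M(0)\le A_1<y_1<y_2$: this is what legitimizes choosing the ``small'' branch of $F$ and is where the precise form of the smallness condition — rather than mere smallness — is used. Everything else is the elementary convexity/intermediate-value analysis of a one-variable function, so I would keep it brief.
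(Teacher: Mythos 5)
Your argument is correct. Note that the paper does not actually prove this lemma: it is quoted as a known result from Strauss \cite{Strauss}, so there is no internal proof to compare against. Your convexity/continuity analysis of $F(y)=A_2y^{\kappa}-y+A_1$ is the standard mechanism behind such bootstrap lemmas, and the key computations all check out: $F$ has its minimum at $y_*=(\kappa A_2)^{-1/(\kappa-1)}$ with $F(y_*)=A_1-(1-\kappa^{-1})(\kappa A_2)^{-1/(\kappa-1)}$, the smallness hypothesis is exactly $F(y_*)<0$ (up to the adjustable constant), $A_1<y_1$ follows from $F(A_1)=A_2A_1^{\kappa}>0$ and strict monotonicity of $F$ on $[0,y_*]$, and $y_1\le A_1/(1-\kappa^{-1})$ follows from $A_2y_1^{\kappa-1}\le\kappa^{-1}$. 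One small bookkeeping point: you cannot literally arrange $M(0)\le A_1$ by adjusting $C$, because the implicit constant $C_1$ in the hypothesis $M(0)\lesssim A_1$ is fixed independently of $C$. This is harmless, though: to select the lower branch you only need $M(0)<y_2$, and since $M(0)\le C_1A_1$ while the smallness condition (with $C$ chosen so small that $C_1C(1-\kappa^{-1})\le 1$) gives $C_1A_1<y_*<y_2$, the constraint $F(M(0))\ge 0$ forces $M(0)\in[0,y_1]$; connectedness then confines $M(t)$ to $[0,y_1]$ on the whole interval, exactly as you argue. With that one sentence repaired, the proof is complete and self-contained.
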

Now we write
\begin{equation}
  M(t)=\|\psi_{\xi}(t)\|^2+\int_0^t\left\|\frac{\psi_{\xi\xi}}{\sqrt{v}}(\tau)\right\|^2\mathrm{d}\tau,
\end{equation}
and
\begin{equation} \label{h1}
  \begin{aligned}
    h_1
    =\|\psi_{0\xi}\|^2
    +m^{\gamma-1}a{\left\|(\sqrt{\Phi_0},\psi_0)\right\|}^2
    +m^{\gamma-1}\left\|\frac{\tilde{v}_{0\xi}}{\tilde{v}_0}\right\|^2
    +a^{-1}\left[\left\|\frac{\tilde{v}_{0\xi}}{\tilde{v}_0}\right\|^2
    +a{\left\|(\sqrt{\Phi_0},\psi_0)\right\|}^2\right]^3.
  \end{aligned}
\end{equation}
Then we substitute (\ref{3.2.00}) into (\ref{3rdEstimateBL-}) to find
\begin{equation}
  M(t)\lesssim h_1 + a^{-4}M^6 M(t)^3.
\end{equation}
Noting that $M(0)\lesssim h_1$ holds,
we can close
the estimates (\ref{3rdEstimateBL-}), (\ref{2ndEstimate3BL-}) and (\ref{1stEstimate2BL-})
by applying Lemma \ref{Strauss}.
\begin{Lemma}
\label{LemmaBL-}
  Suppose that (\ref{condition1BL-}) and (\ref{condition2BL-}) hold.
  There is a  constant $c_3$ independent of $T$, $m$, $M$, $\delta$ and $a$, such that
  if
  \begin{equation} \label{condition3BL-}
    g_3(\delta,a, m, M,\phi_0,\psi_0):=h_1 {a^{-2}M^3} < c_3,
  \end{equation}
  where $A_1$ is defined by (\ref{h1}),
  then it holds that for each $a>1$ and $ 0\leq t\leq T$,
  \begin{equation}
    \|\psi_{\xi}(t)\|^2+\int_0^t\left\|
    \frac{\psi_{\xi\xi}}{\sqrt{v}}(\tau)\right\|^2\mathrm{d}\tau\lesssim h_1,
  \end{equation}
  \begin{equation} \label{2ndEstimate4BL-}
  \begin{aligned}
    \left\|\frac{\tilde{v}_{\xi}}{\tilde{v}}(t)\right\|^2+
    \int_0^t\int_0^{\infty}\frac{\tilde{v}_{\xi}^2}{v^{\gamma}\tilde{v}^2}\mathrm{d}\xi \mathrm{d}\tau
    \lesssim\left\|\frac{\tilde{v}_{0\xi}}{\tilde{v}_0}\right\|^2
    +a{\left\|(\sqrt{\Phi_0},\psi_0)\right\|}^2
    +M^2a^{-1}
    h_1
  \end{aligned}
  \end{equation}
  and
  \begin{equation} \label{1stEstimate3BL-}
  \begin{aligned}
    &\left\|(\sqrt{\Phi},\psi)(t)\right\|^2
    +\int^t_0\int_0^{\infty}\left[\frac{{\psi}^2_{\xi}}{v}
    +\frac{|U_{\xi}\phi{\psi}_{\xi}|}{v}+|U_{\xi}|(p(v)-p(V)-p'(V)\phi)\right]\mathrm{d}\xi \mathrm{d}\tau\\[2mm]
    \lesssim ~&{\left\|(\sqrt{\Phi_0},\psi_0)\right\|}^2
    +\delta m^{\gamma+2} M^{\gamma+2}\left\|\frac{\tilde{v}_{0\xi}}{\tilde{v}_0}\right\|^2
    +M^2a^{-1}\delta m^{\gamma+2} M^{\gamma+2}
    h_1.
  \end{aligned}
  \end{equation}
\end{Lemma}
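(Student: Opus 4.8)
The plan is to apply Strauss's inequality, Lemma \ref{Strauss}, to the function $M(t)=\|\psi_{\xi}(t)\|^2+\int_0^t\|\psi_{\xi\xi}/\sqrt{v}(\tau)\|^2\mathrm{d}\tau$. The computation immediately preceding the statement — substituting (\ref{3.2.00}) into (\ref{3rdEstimateBL-}) — already furnishes, under the standing assumptions (\ref{condition1BL-}) and (\ref{condition2BL-}), an inequality of the form $M(t)\lesssim h_1+a^{-4}M^6\,M(t)^3$ valid for all $0\le t\le T$, where $h_1$ is given by (\ref{h1}) and the implicit constant is independent of $T,m,M,\delta,a$; moreover $M(0)=\|\psi_{0\xi}\|^2\lesssim h_1$. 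This places us exactly in the hypotheses of Lemma \ref{Strauss} with $A_1\sim h_1$, $A_2\sim a^{-4}M^6$, and $\kappa=3$.

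First I would invoke Lemma \ref{Strauss}. For $\kappa=3$ its smallness requirement $A_1A_2^{1/(\kappa-1)}<C(1-\kappa^{-1})\kappa^{-1/(\kappa-1)}$ becomes $h_1\,(a^{-4}M^6)^{1/2}=h_1\,a^{-2}M^3<c_3$, which is precisely condition (\ref{condition3BL-}): $g_3(\delta,a,m,M,\phi_0,\psi_0)=h_1\,a^{-2}M^3<c_3$. Hence there is a constant $c_3>0$, independent of $T,m,M,\delta,a$, such that whenever (\ref{condition3BL-}) holds one obtains $M(t)\lesssim h_1/(1-\kappa^{-1})\lesssim h_1$ on $[0,T]$, which is the first asserted bound.

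Next I would feed the resulting estimate $\int_0^t\|\psi_{\xi\xi}/\sqrt{v}(\tau)\|^2\mathrm{d}\tau\lesssim h_1$ back into the two inequalities already established. Plugging it into (\ref{2ndEstimate3BL-}) converts the last term $M^2a^{-1}\int_0^t\|\psi_{\xi\xi}/\sqrt{v}\|^2\mathrm{d}\tau$ into $M^2a^{-1}h_1$, yielding (\ref{2ndEstimate4BL-}); plugging it into (\ref{1stEstimate2BL-}) converts the last term there into $M^2a^{-1}\delta m^{\gamma+2}M^{\gamma+2}h_1$, yielding (\ref{1stEstimate3BL-}). No further cancellation is required for this step.

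The one point demanding genuine care — hence the main obstacle — is the bookkeeping of constants: one must verify that the implicit constant in $M(t)\lesssim h_1+a^{-4}M^6M(t)^3$ (and in each $\lesssim$ leading to (\ref{3.2.00}), (\ref{2ndEstimate3BL-}), (\ref{1stEstimate2BL-})) is genuinely independent of $a,m,M,\delta,T$, so that the threshold $c_3$ produced by Lemma \ref{Strauss} can be chosen uniformly, and that $h_1$ and $g_3$ have been organized so that (\ref{condition3BL-}) is exactly the hypothesis Strauss's lemma needs. Once this is checked, the three conclusions follow at once from the chain described above.
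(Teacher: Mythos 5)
Your proposal is correct and follows essentially the same route as the paper: substituting (\ref{3.2.00}) into (\ref{3rdEstimateBL-}) gives $M(t)\lesssim h_1+a^{-4}M^6M(t)^3$ with $M(0)\lesssim h_1$, Strauss's Lemma \ref{Strauss} with $\kappa=3$ then requires exactly $h_1a^{-2}M^3<c_3$, i.e.\ (\ref{condition3BL-}), and yields $M(t)\lesssim h_1$, after which feeding this bound back into (\ref{2ndEstimate3BL-}) and (\ref{1stEstimate2BL-}) gives (\ref{2ndEstimate4BL-}) and (\ref{1stEstimate3BL-}). Your attention to the uniformity of the implicit constants matches the paper's convention that all $\lesssim$ constants are independent of $T$, $m$, $M$, $\delta$ and $a$.
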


\noindent
\textbf{Proof of Theorem 2.}
Without loss of generality, we assume that $\delta\leq 1$.
First, we note from (\ref{Phiphi}), (\ref{log(tildeofv)}) and the initial conditions (\ref{ic2}) that
$${\left\|(\sqrt{\Phi_0},\psi_0)\right\|}\leq C\delta^{\alpha-\frac{\gamma+1}{2}l},\quad
\left\|\frac{\tilde{v}_{0\xi}}{\tilde{v}_0}\right\|\leq C\delta^{-l}(\|\phi_{0\xi}\|+\|V_{\xi}\phi\|)
\leq C\delta^{-l-\beta}.$$
Since $({\phi}_0,{\psi}_0)\in H_0^1(\mathbb{R}_{+})$,
we apply Lemma 2.2 to find $t_0>0$ such that
the problem (\ref{BLperturbation}) has a unique solution $(\phi,\psi)\in X_{1/m_0,M_0}(0,t_0)$
with $m_0,M_0\lesssim\delta^{-l}$.
Then we find that for $a=\delta^{-2\alpha-2\beta+(\gamma-1)l}>1$,
$$g_1\left(\delta,a,m_0,M_0\right)
+g_2(a,m_0,M_0)\lesssim
\delta^{1-2\alpha-2\beta-(\gamma+5)l}+\delta^{2\alpha+2\beta-2\gamma l},$$
and
$$g_3(\delta,a, m_0, M_0,\phi_0,\psi_0)\lesssim
\left[\delta^{-2\beta-(\gamma+1)l}+\delta^{2\alpha-4\beta-(\gamma+5)l}\right]
 \delta^{4\alpha+4\beta-(2\gamma+1)l}.$$
Hence if $(\ref{indexBL-})_1$ holds, there exists $\delta_1>0$ such that
(\ref{condition1BL-}), (\ref{condition2BL-}) and (\ref{condition3BL-})
hold for each $\delta \leq \delta_1$.
Next we compute from 
$(\ref{indexBL-})_{1}$ that the right-hand sides of (\ref{2ndEstimate4BL-}) and (\ref{1stEstimate3BL-})
are bounded by $C_1\delta^{-2\beta-2l}$ and $C_2\delta^{2\alpha-(\gamma+1)l}$, respectively.
We conclude from Lemma \ref{LemmaBL-} that
\begin{equation}
  \begin{aligned}
    \left|\Psi(\tilde{v}(t_0,\xi))\right|
    \leq \left\| \sqrt{\tilde{\Phi}}(t_0) \right\| \left\| \frac{\tilde{v}_{\xi}}{\tilde{v}}(t_0) \right\|
    \lesssim\delta^{\theta},
  \end{aligned}
\end{equation}
where $\Psi$ is defined by (\ref{Psi}) and $\theta=\alpha-\beta-(\gamma+3)l/2.$
Hence
\begin{equation} \label{boundofvBL-}
  \delta^{{2\theta}/(1-\gamma)}\lesssim v (t_0,\xi)\lesssim \delta^{2\theta},
  \quad \forall\ \xi\in\mathbb{R}_+.
\end{equation}

Since $(\ref{indexBL-})_2$ implies $2\theta/(\gamma-1)\leq 0$ and $ 2\theta \leq 0$,
we apply Lemma 2.2 again and recall (\ref{boundofvBL+}) to find $t_1>0$
such that (\ref{BLperturbation}) has a unique solution $(\phi,\psi)\in X_{1/m_1,M_1}(0,t_0+t_1)$,
where $m_1 \lesssim \delta^{2\theta/(\gamma-1)}$ and $ M_1 \lesssim \delta ^{2\theta}$.
By elementary calculations, we conclude from $(\ref{indexBL-})_3$ that there exists
$0<\delta_2\leq \delta_1$ such that
that if $\delta \leq \delta_2$, then
the right-hand side of (\ref{2ndEstimate4BL-})  and (\ref{1stEstimate3BL-})
are bounded by $C_1\delta^{-2\beta-2l}$ and $C_2\delta^{2\alpha-(\gamma+1)l}$, respectively.
Since
\begin{equation*}
  g_1\left(\delta,a,m_1,M_!\right)
+g_2(a,m_1,M_1)\lesssim\delta^{1-2\alpha-2\beta+(\gamma-1)l+\frac{2\gamma (\gamma+2)}{\gamma-1}\theta }
  +\delta^{2\alpha+2\beta-(\gamma-1)l+6\theta}
\end{equation*}
and
\begin{equation*}
  g_3(\delta,a, m_1, M_1,\phi(t_0),\psi(t_0))\lesssim
  \left[\delta^{2\theta-2\beta-2l}+\delta^{2\alpha-4\beta-(\gamma+5)l}\right]
  \delta^{4\alpha+4\beta-(2\gamma-2)l+6\theta},
\end{equation*}
$(\ref{indexBL-})_3$ implies that there exists $0<\delta_0\leq \delta_2$ such that
(\ref{condition2BL-}) and (\ref{condition3BL-}) hold for each $\delta \leq \delta_0$.
Then, combining Lemma 2.2 and the continuation process,
we can prove (\ref{BL}) has the global solution
in time $(\phi,\psi)\in X_{1/m_1,M_1}(0,\infty)$
satisfying
(\ref{estimate}) with the constant $C$ depending only on $\delta$.
Thus, the asymptotic behavior of the solution (\ref{largetime})
is concluded by employing Sobolev's inequality.$\quad\quad \Box$

\section{Stability of Rarefaction Wave}

In this section, we investigate the case when
$(v_-,u_-)\in \Omega_{super}$ and $(v_+,u_+)\in R_1(v_-,u_-)$.
We assume that
(\ref{RWperturbation}) has a solution
$(\phi,\psi)\in X_{1/m,M}(0,T)$ satisfying (\ref{Equality}) for some $T>0$ and each $0\leq t\leq T$.
As before, we also simply write $c$ and $C$ as positive constants
independent of $T,\ m,\ M$ and $\epsilon$ and the notation $A\lesssim B$ will mean that
$A\leq CB$ holds uniformly for some positive constant independent of $T,\ m,\ M$ and $\epsilon$.
Without loss of generality, we choose $m$ and $M$ such that $m,M\geq 1$. 

Our first result is concerned with the basic energy estimates which is stated as folloes
\begin{Lemma}
  If ${\epsilon}$ is suitably small,
  then it holds for each $0\leq t\leq T$,
  \begin{multline}\label{1stEstimateRW}
  \left\|(\sqrt{\Phi},\psi)(t)\right\|^2
  +\int^t_0\int_0^{\infty}\left[\frac{{\psi}^2_{\xi}}{v}+\frac{|U_{\xi}\phi{\psi}_{\xi}|}{v}
  +|U_{\xi}|(p(v)-p(V)-p'(V)\phi)\right]\mathrm{d}\xi \mathrm{d}\tau\\
  \lesssim {\left\|(\sqrt{\Phi_0},\psi_0)\right\|}^2+{\epsilon}^{\frac{2}{9}}M^{\frac{1}{2}}.
\end{multline}
\end{Lemma}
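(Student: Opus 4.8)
The plan is to run the standard energy argument for the perturbation system (\ref{RWperturbation}), the only genuinely new point being a boundary–interaction term that, in contrast with the boundary-layer case, is linear in $\psi_\xi$ with no companion $\phi^2$, so it cannot be absorbed by a discriminant estimate.

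First I would multiply $(\ref{RWperturbation})_1$ by $p(V)-p(v)$ and $(\ref{RWperturbation})_2$ by $\psi$ (with $v=V+\phi$) and add. Writing $\Phi=\Phi(v,V)$ as in (\ref{Phi}), so that $\Phi_v=p(V)-p(v)$ and $\Phi_V=p'(V)\phi$, and using the \emph{exact} profile relation $V_t-s_-V_\xi=U_\xi$ from $(\ref{RW})_1$, the non-divergence remainder collapses to $U_\xi\big(p(v)-p(V)-p'(V)\phi\big)\ge0$; integrating the viscous term by parts in $\xi$ yields the cellwise identity
\begin{equation*}
  \Big(\Phi+\tfrac12\psi^2\Big)_t+\mu\frac{\psi_\xi^2}{v}+\mu\frac{U_\xi\psi_\xi}{v}+U_\xi\big(p(v)-p(V)-p'(V)\phi\big)=\text{(a perfect $\xi$-derivative)}.
\end{equation*}
Because $(V,U)(t,\cdot)$ solves the \emph{inviscid} system (\ref{RW}) (there is no $\mu(U_\xi/V)_\xi$ to cancel, unlike in (\ref{BL})), the cross term $\mu U_\xi\psi_\xi/v$ genuinely survives. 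Integrating over $(0,t)\times(0,\infty)$, the flux term drops out: at $\xi=0$ one has $\psi(t,0)=\phi(t,0)=0$, hence $\Phi(v(t,0),V(t,0))=\Phi(v_-,v_-)=0$, and at $\xi=\infty$ everything decays. This reproduces the left side of (\ref{1stEstimateRW}) up to the single term $\int_0^t\!\int_0^\infty\mu\,U_\xi\psi_\xi/v\,d\xi d\tau$.

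The heart of the proof is controlling this term. Splitting $U_\xi/v=U_\xi/V-U_\xi\phi/(vV)$, the $\phi$-part gives $-\mu\int_0^t\!\int_0^\infty U_\xi\phi\psi_\xi/(vV)$, dominated by $\eta\int_0^t\!\int\psi_\xi^2/v+C_\eta\int_0^t\!\int U_\xi^2\phi^2/v$; using $|U_\xi|\lesssim\epsilon$ from (\ref{Uxip}) and $f(v,V)\gtrsim (v_+^{\gamma+1}M)^{-1}$ from (\ref{fvV}), the last integral is $\lesssim\epsilon mM\int_0^t\!\int U_\xi\big(p(v)-p(V)-p'(V)\phi\big)$, which is absorbable once $\epsilon mM$ is small (guaranteed by (\ref{indexR})). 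For the $\phi$-free part I integrate by parts in $\xi$ once more — the boundary terms vanish since $\psi(t,0)=0$ and $U_\xi\to0$ — to get
\begin{equation*}
  \int_0^t\!\!\int_0^\infty\mu\frac{U_\xi\psi_\xi}{V}\,d\xi\,d\tau
  =-\int_0^t\!\!\int_0^\infty\mu\,\psi\Big(\frac{U_\xi}{V}\Big)_\xi d\xi\,d\tau
  \le \mu\int_0^t\|\psi(\tau)\|_{L^\infty}\left\|\Big(\frac{U_\xi}{V}\Big)_\xi(\tau)\right\|_{L^1}d\tau .
\end{equation*}
Now insert $\|\psi\|_{L^\infty}\le\sqrt2\,\|\psi\|^{1/2}\|\psi_\xi\|^{1/2}\le\sqrt2\,M^{1/4}\|\psi\|^{1/2}\|\psi_\xi/\sqrt v\|^{1/2}$ and $\|(U_\xi/V)_\xi(t)\|_{L^1}\le C\min\{\epsilon,(1+t)^{-1+1/q}\}$ from (\ref{Uxixip}). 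Young's inequality (exponents $4$ and $\tfrac43$) peels off a small multiple of $\int_0^t\!\int\psi_\xi^2/v$ and leaves a term bounded by a fixed power of $M$ times $\big(\sup_{[0,t]}\|\psi\|\big)^{2/3}\int_0^t\|(U_\xi/V)_\xi\|_{L^1}^{4/3}d\tau$; here $q\ge10$ is exactly what is needed, since it makes the time exponent $-\tfrac43+\tfrac{4}{3q}<-1$, so $\int_0^\infty\min\{\epsilon^{4/3},(1+\tau)^{-4/3+4/(3q)}\}d\tau\lesssim\epsilon^{2/9}$.

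Collecting everything gives $\|(\sqrt\Phi,\psi)(t)\|^2+\text{(dissipation)}\lesssim\|(\sqrt{\Phi_0},\psi_0)\|^2+\epsilon^{2/9}M^{c}\big(\sup_{[0,t]}\|\psi\|\big)^{2/3}$ for some fixed $c>0$; since $\|\psi(t)\|^2$ sits on the left, the inequality closes (by a short continuity/bootstrap step, or by a Young-type manipulation of $\sup_{[0,t]}\|\psi\|$), producing the bound $\lesssim\|(\sqrt{\Phi_0},\psi_0)\|^2+\epsilon^{2/9}M^{1/2}$. The extra term $\int_0^t\!\int|U_\xi\phi\psi_\xi|/v$ in (\ref{1stEstimateRW}) is then recovered a posteriori from $|U_\xi\phi\psi_\xi|/v\le\psi_\xi^2/v+\tfrac14 U_\xi^2\phi^2/v$ and the bound on $\int_0^t\!\int U_\xi^2\phi^2/v$ obtained above. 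The main obstacle is precisely the cross term $\int_0^t\!\int\mu U_\xi\psi_\xi/v$: a naive Cauchy–Schwarz would leave $\int_0^t\|U_\xi(\tau)\|^2d\tau$, and since (\ref{Uxip}) only gives $\|U_\xi(t)\|\lesssim\min\{\epsilon^{1/2},(1+t)^{-1/2}\}$ this diverges logarithmically in $t$; the remedy — integrating by parts to bring in $(U_\xi/V)_\xi$, whose $L^1$-norm decays like $(1+t)^{-1+1/q}$, and paying with a Sobolev interpolation of $\psi$ against the upper bound $v\le M$ — is what forces $q\ge10$ and produces the residual $\epsilon^{2/9}M^{1/2}$ rather than a full absorption.
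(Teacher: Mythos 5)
Your argument is essentially the paper's: after the same basic energy identity, the genuinely new term is exactly $\mu\psi\left(\frac{U_{\xi}}{V}\right)_{\xi}$ (the paper produces it directly by subtracting $\frac{U_\xi}{V}$ inside the viscous flux, you reach the same term by one more integration by parts, with the boundary contributions vanishing as you say), and you then estimate it precisely as in the paper, via (\ref{Uxixip}) with $q\ge 10$, the Sobolev bound $\|\psi\|_{L^\infty}\le \sqrt2\,M^{1/4}\|\psi\|^{1/2}\|\psi_\xi/\sqrt v\|^{1/2}$, and Young's inequality, arriving at the same residual $\epsilon^{2/9}M^{1/2}$. Two deviations are worth recording. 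First, the closing step: the paper uses Young once more to produce the integrable weight $(1+\tau)^{-16/15}$ and then Gronwall, whereas you keep $\big(\sup_{[0,t]}\|\psi\|\big)^{2/3}$ and absorb it by Young against the left-hand side; both are valid (your absorption needs no smallness because of the cube root, only finiteness of $\sup_{[0,T]}\|\psi\|$, which the solution class provides). Second, and this is the one real caveat: for the quadratic cross term $\mu U_\xi\phi\psi_\xi/(vV)$ the paper keeps it inside the quadratic form in $(\psi_\xi/\sqrt v,\phi)$ and uses the discriminant bound coming from (\ref{fvV}) together with $\|U_\xi\|_{L^\infty}\lesssim\epsilon$ and $v_-\le V\le v_+$; this needs only "$\epsilon$ small'' with threshold and constants independent of $m,M$, which is what the statement of the lemma (with its $m,M$-uniform $\lesssim$) requires. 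Your Cauchy--Schwarz route, using the crude bounds $f(v,V)\gtrsim M^{-1}$ and $v^{-1}\le m$, instead requires $\epsilon mM\ll1$; this is an extra hypothesis not present in the lemma, and (\ref{indexR}) by itself does not supply it — it only does so later in the proof of Theorem 3 once $m,M$ have been tied to powers of $\epsilon$. The defect is removable without changing your structure: keep the full strength of (\ref{fvV}), namely $Vvf(v,V)\ge\gamma V^{-\gamma}$, so that $U_\xi^2\phi^2/(vV^2)\le \gamma^{-1}\epsilon\,V^{\gamma-1}\,U_\xi\big(p(v)-p(V)-p'(V)\phi\big)$ with the factors of $v$ cancelling; then the absorption again needs only $\epsilon$ small, and your proof matches the lemma as stated.
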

\begin{proof}
  Multiply $(\ref{RWperturbation})_1$ and $(\ref{RWperturbation})_2$ by $p(V)-p(v)$ and $\psi$, respectively,
  and add these two equations to have
  \begin{equation} \label{divergence1RW}
  \begin{aligned}
  &\left[\Phi+\frac{1}{2}\psi^2\right]_t+\mu\frac{\psi_{\xi}^2}{v}-\mu\frac{U_{\xi}\phi\psi_{\xi}}{vV}
  +U_{\xi}(p(v)-p(V)-p'(V)\phi)\\
  =&\left[s_-\left(\Phi+\frac{1}{2}\psi^2\right)+(p(V)-p(v))\psi
  +\mu\left(\frac{U_{\xi}+{\psi}_{\xi}}{V+\phi}-\frac{U_{\xi}}{V}\right)\psi\right]_{\xi}
  +\mu\psi\left(\frac{U_{\xi}}{V}\right)_{\xi}.
  \end{aligned}
  \end{equation}
  We recall $U_{\xi}\geq 0$ and thus compute from (\ref{fvV}) and (\ref{Uxip}) that
  if ${\epsilon}$ is suitably small, the discriminant $D$ of
$$\mu\frac{\psi_{\xi}^2}{v}-\mu\frac{U_{\xi}\phi\psi_{\xi}}{vV}+U_{\xi}(p(v)-p(V)-p'(V)\phi)$$
satisfies
  $$D=\frac{\mu |U_{\xi}|}{V^2vf(v,V)}-4\leq \frac{C{\epsilon}}{V^{-\gamma+1}}-4<0.$$
  Then, we integrate (\ref{divergence1RW}) over $(0,t)\times(0,\infty)$ to find
    \begin{multline} \label{1stEstRW}
  \left\|(\sqrt{\Phi},\psi)(t)\right\|^2
  +\int^t_0\int_0^{\infty}\left[\frac{{\psi}^2_{\xi}}{v}+\frac{|U_{\xi}\phi{\psi}_{\xi}|}{v}
  +|U_{\xi}|(p(v)-p(V)-p'(V)\phi)\right]\mathrm{d}\xi \mathrm{d}\tau\\[2mm]
  \lesssim {\left\|(\sqrt{\Phi_0},\psi_0)\right\|}^2+
  \int_0^t\|\psi(\tau)\|_{L^{\infty}}\left\|\left(\frac{U_{\xi}}{V}\right)_{\xi}(\tau)\right\|_{L^1}
  \mathrm{d}\tau.
\end{multline}
  To estimate the last term of (\ref{1stEstRW}),
  we get from (\ref{Uxixip}) and $q\geq 10$ that
  \begin{equation*}
    \left\|\left(\frac{U_{\xi}}{V}\right)_{\xi}(\tau)\right\|_{L^1}
    \leq \left\|\left(\frac{U_{\xi}}{V}\right)_{\xi}(\tau)\right\|_{L^1}^{\frac{1}{9}+\frac{8}{9}}
    \lesssim {\epsilon}^{\frac{1}{9}}(1+\tau)^{-\frac{4}{5}}.
  \end{equation*}
  Then we employ
  Sobolev's inequality and Young's inequality to deduce that for each $\nu>0$,
  \begin{equation} \label{4.3}
    \begin{aligned}
      &\int_0^t\|\psi(\tau)\|_{L^{\infty}}\left\|\left(\frac{U_{\xi}}{V}\right)_{\xi}(\tau)\right\|_{L^1}
      \mathrm{d}\tau\\[2mm]
      \leq~ &M^{\frac{1}{4}}
      \int_0^t\left\|\frac{\psi_{\xi}}{\sqrt{v}}(\tau)\right\|^{\frac{1}{2}}\|\psi(\tau)\|^{\frac{1}{2}}
      \left\|\left(\frac{U_{\xi}}{V}\right)_{\xi}(\tau)\right\|_{L^1}\mathrm{d}\tau\\[2mm]
      \leq ~&
      \nu\int^t_0\left\|\frac{\psi_{\xi}}{\sqrt{v}}(\tau)\right\|^2\mathrm{d}\tau
      +C(\nu)M^{\frac{1}{3}}\int_0^t \|\psi(\tau)\|^{\frac{2}{3}}
      \left\|\left(\frac{U_{\xi}}{V}\right)_{\xi}(\tau)\right\|_{L^1}^{\frac{4}{3}}\mathrm{d}\tau\\[2mm]
      \leq ~&
      \nu\int^t_0\left\|\frac{\psi_{\xi}}{\sqrt{v}}(\tau)\right\|^2\mathrm{d}\tau
      +\int_0^t \|\psi(\tau)\|^{2}(1+\tau)^{-\frac{16}{15}}\mathrm{d}\tau  
      +C(\nu){\epsilon}^{\frac{2}{9}}M^{\frac{1}{2}}.
    \end{aligned}
  \end{equation}
  Plugging (\ref{4.3}) into (\ref{1stEstRW}), we can complete this lemma by making use of Gronwall's inequality.
\end{proof}

We set $\tilde{v}:={v}/{V}$, and so
equation $(\ref{RWperturbation})_2$ is also written as
\begin{equation} \label{RW2}
\begin{aligned}
\left[\mu\frac{\tilde{v}_{\xi}}{\tilde{v}}-\psi\right]_t-
s_{-}\left[\mu\frac{\tilde{v}_{\xi}}{\tilde{v}}-\psi\right]_{\xi}
+\frac{\gamma\tilde{v}_{\xi}}{V^{\gamma}{\tilde{v}}^{\gamma+1}}
=\frac{\gamma V_{\xi}}{V^{\gamma+1}}(1-{\tilde{v}}^{-\gamma})-\mu\left(\frac{U_{\xi}}{V}\right)_{\xi}.
\end{aligned}
\end{equation}
Multiplying $(\ref{RW2})$ by ${\tilde{v}_{\xi}}/{\tilde{v}}$, we have a divergence form
\begin{equation} \label{divergence2RW}
  \begin{aligned}
    &\left[\frac{\mu}{2}\left(\frac{\tilde{v}_{\xi}}{\tilde{v}}\right)^2-\psi\frac{\tilde{v}_{\xi}}{\tilde{v}}\right]_t
     +\frac{\gamma \tilde{v}_{\xi}^2}{v^{\gamma}\tilde{v}^2}
    +\left[\psi\frac{\tilde{v}_t}{\tilde{v}}
     -\frac{\mu s_-}{2}\left(\frac{\tilde{v}_{\xi}}{\tilde{v}}\right)^2\right]_{\xi}\\[2mm]
    =~&\frac{\psi_{\xi}^2}{v}-\frac{U_{\xi}\phi\psi_{\xi}}{vV}
     +\frac{\gamma V_{\xi}}{V^{\gamma+1}}(1-{\tilde{v}}^{-\gamma})\frac{\tilde{v}_{\xi}}{\tilde{v}}
     -\mu\left(\frac{U_{\xi}}{V}\right)_{\xi}\frac{\tilde{v}_{\xi}}{\tilde{v}}.
  \end{aligned}
\end{equation}
We have from (\ref{Phiphi}) and (\ref{Uxip}) that
\begin{equation}
\begin{aligned}
   (1-{\tilde{v}}^{-\gamma})^2=v^{-2\gamma}
   \left[\gamma\int_0^1(\theta v+(1-\theta)V)^{\gamma-1}\mathrm{d}\theta\phi\right]^2
   \lesssim v^{-2\gamma}M^{2\gamma-2}\phi^2\lesssim v^{-2\gamma}M^{3\gamma-1}\Phi,
\end{aligned}
\end{equation}
and
\begin{equation}
  \|V_{\xi}(\tau)\|_{L^{\infty}}\leq \|V_{\xi}(\tau)\|_{L^{\infty}}^{\frac{1}{4}+\frac{3}{4}}
  \lesssim {\epsilon}^{\frac{1}{4}}(1+\tau)^{-\frac{3}{4}}.
\end{equation}
These two inequalities imply that
\begin{equation} \label{4.4}
\begin{aligned}
    &\int_0^t\int_0^{\infty}\frac{\gamma V_{\xi}}{V^{\gamma+1}}(1-{\tilde{v}}^{-\gamma})\frac{\tilde{v}_{\xi}}{\tilde{v}}\mathrm{d}\xi\mathrm{d}\tau\\[2mm]
  \leq ~&
   C m^{\gamma}M^{3\gamma-1} \int_0^t\|V_{\xi}(\tau)\|_{L^{\infty}}^2\|\sqrt{\Phi}(\tau)\|^2\mathrm{d}\tau
   +\int_0^t\int_0^{\infty}\frac{\gamma \tilde{v}_{\xi}^2}{4v^{\gamma}\tilde{v}^2}\mathrm{d}\xi\mathrm{d}\tau\\[2mm]
  \leq ~&
  Cm^{\gamma}M^{3\gamma-1}{\epsilon}^{\frac{1}{2}}
  \left[{\left\|(\sqrt{\Phi_0},\psi_0)\right\|}^2+{\epsilon}^{\frac{2}{9}}M^{\frac{1}{2}}\right]
   +\int_0^t\int_0^{\infty}\frac{\gamma \tilde{v}_{\xi}^2}{4v^{\gamma}\tilde{v}^2}\mathrm{d}\xi\mathrm{d}\tau,
\end{aligned}
\end{equation}
where we used (\ref{1stEstimateRW}) to get the last inequality.
By a similar way as the above, we have from (\ref{Uxixip}) that
\begin{equation} \label{4.5}
  \begin{aligned}
    \int_0^t\int_0^{\infty}\left|\mu\left(\frac{U_{\xi}}{V}\right)_{\xi}
    \frac{\tilde{v}_{\xi}}{\tilde{v}}\right|\mathrm{d}\xi\mathrm{d}\tau
    \leq ~&
    CM^{\gamma}\int_0^t\left\|\left(\frac{U_{\xi}}{V}\right)_{\xi}(\tau)\right\|^2\mathrm{d}\tau
    +\int_0^t\int_0^{\infty}\frac{\gamma \tilde{v}_{\xi}^2}{4v^{\gamma}\tilde{v}^2}\mathrm{d}\xi\mathrm{d}\tau\\[2mm]
    \leq ~&
    CM^{\gamma}{\epsilon}
    +\int_0^t\int_0^{\infty}\frac{\gamma \tilde{v}_{\xi}^2}{4v^{\gamma}\tilde{v}^2}\mathrm{d}\xi\mathrm{d}\tau.
  \end{aligned}
\end{equation}
Therefore, integrating (\ref{divergence2RW}) over $(0,t)\times(0,\infty)$ and using
(\ref{4.4})-(\ref{4.5}), we have the following lemma.
\begin{Lemma}
If ${\epsilon}$ is suitably small,
it holds that
\begin{equation} \label{2ndEstimateRW}
\begin{aligned}
    \left\|\frac{\tilde{v}_{\xi}}{\tilde{v}}(t)\right\|^2
    +\int_0^t\int_0^{\infty}\frac{\tilde{v}_{\xi}^2}{v^{\gamma}\tilde{v}^2}\mathrm{d}\xi \mathrm{d}\tau
    \lesssim B+u_-^{-1}\int_0^t \psi_{\xi}^2(\tau,0)\mathrm{d}\tau,
\end{aligned}
\end{equation}
where
\begin{equation} \label{B}
    B=\left\|\frac{\tilde{v}_{0\xi}}{\tilde{v}_0}\right\|^2
    +\left[1+m^{\gamma}M^{3\gamma-1}{\epsilon}^{\frac{1}{2}}\right]
    \left[{\left\|(\sqrt{\Phi_0},\psi_0)\right\|}^2
    +{\epsilon}^{\frac{2}{9}}M^{\frac{1}{2}}\right]+M^{\gamma}\epsilon.
\end{equation}
\end{Lemma}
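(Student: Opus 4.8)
The plan is to integrate the divergence identity \eqref{divergence2RW} --- obtained by multiplying \eqref{RW2} by $\tilde v_\xi/\tilde v$ --- over the strip $(0,t)\times(0,\infty)$ and to keep careful track of the boundary terms produced by the $t$- and $\xi$-derivatives. At $\xi=\infty$ all boundary contributions vanish thanks to the decay of $(V,U)$ and the membership $(\phi,\psi)(t)\in H_0^1$. At $\xi=0$ the term $\psi\,\tilde v_t/\tilde v$ drops because $\psi(\tau,0)=0$, so the flux $[\psi\tilde v_t/\tilde v-\tfrac{\mu s_-}{2}(\tilde v_\xi/\tilde v)^2]_\xi$ leaves only $-\tfrac{\mu s_-}{2}(\tilde v_\xi/\tilde v)^2(\tau,0)$; since $s_-=-u_-/v_-<0$ this quantity is nonnegative, and after moving it to the right-hand side it equals $\tfrac{\mu|s_-|}{2}(\tilde v_\xi/\tilde v)^2(\tau,0)$. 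Invoking \eqref{log(tildeofv)} together with $\phi(\tau,0)=0$ and the boundary relation \eqref{Equality} gives $(\tilde v_\xi/\tilde v)^2(\tau,0)=v_-^{-2}\phi_\xi^2(\tau,0)=u_-^{-2}\psi_\xi^2(\tau,0)$, so that $\tfrac{\mu|s_-|}{2}(\tilde v_\xi/\tilde v)^2(\tau,0)\sim u_-^{-1}\psi_\xi^2(\tau,0)$; time integration yields precisely the term $u_-^{-1}\int_0^t\psi_\xi^2(\tau,0)\,\mathrm{d}\tau$ appearing in \eqref{2ndEstimateRW}.

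For the time-boundary terms I would bound $\int_0^\infty[\tfrac\mu2(\tilde v_\xi/\tilde v)^2-\psi\,\tilde v_\xi/\tilde v](t)\,\mathrm{d}\xi\ge\tfrac\mu4\|\tilde v_\xi/\tilde v(t)\|^2-C\|\psi(t)\|^2$ by Cauchy's inequality, and the corresponding quantity at $t=0$ from above by $C(\|\tilde v_{0\xi}/\tilde v_0\|^2+\|\psi_0\|^2)$. The first two source terms on the right of \eqref{divergence2RW}, namely $\psi_\xi^2/v$ and $-U_\xi\phi\psi_\xi/(vV)$, are (using $V\ge v_->0$) dominated by $\psi_\xi^2/v+|U_\xi\phi\psi_\xi|/v$, whose time--space integrals --- together with $\|\psi(t)\|^2\le\|(\sqrt\Phi,\psi)(t)\|^2$ --- are all controlled by the basic energy estimate \eqref{1stEstimateRW}, hence by $\|(\sqrt{\Phi_0},\psi_0)\|^2+\epsilon^{2/9}M^{1/2}$. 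The remaining two source terms, those carrying $V_\xi(1-\tilde v^{-\gamma})\tilde v_\xi/\tilde v$ and $(U_\xi/V)_\xi\,\tilde v_\xi/\tilde v$, are exactly the ones already estimated in \eqref{4.4} and \eqref{4.5}: each absorbs a quarter of the dissipation $\int_0^t\int_0^\infty\gamma\tilde v_\xi^2/(v^\gamma\tilde v^2)$ into the left-hand side and contributes the error quantities $m^\gamma M^{3\gamma-1}\epsilon^{1/2}[\|(\sqrt{\Phi_0},\psi_0)\|^2+\epsilon^{2/9}M^{1/2}]$ and $M^\gamma\epsilon$. Thus half the dissipation survives on the left, while all error terms produced are precisely the pieces of $B$ in \eqref{B}; collecting everything gives \eqref{2ndEstimateRW}. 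The smallness of $\epsilon$ enters only through the appeal to \eqref{1stEstimateRW}.

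I expect the only delicate point to be the boundary analysis at $\xi=0$: one must check that $\psi\,\tilde v_t/\tilde v$ genuinely vanishes there (this uses nothing beyond $\psi(\tau,0)=0$), that the surviving flux term carries the favorable sign forced by $s_-<0$, and that its magnitude is exactly of order $u_-^{-1}\psi_\xi^2(\tau,0)$ once \eqref{Equality} and \eqref{log(tildeofv)} are used. Everything else is routine bookkeeping combining the already-proven inequalities \eqref{1stEstimateRW}, \eqref{4.4} and \eqref{4.5}. The $u_-^{-1}$ weight gained on the boundary integral is the quantity that will be exploited later, through the largeness of $u_-$, to close the full set of a priori estimates; at this stage it is simply carried forward.
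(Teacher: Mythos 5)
Your argument is correct and follows essentially the paper's own route: integrate \eqref{divergence2RW} over $(0,t)\times(0,\infty)$, bound the time-boundary cross term $\psi\,\tilde v_\xi/\tilde v$ by Cauchy's inequality together with the basic estimate \eqref{1stEstimateRW}, absorb the two rarefaction source terms via \eqref{4.4}--\eqref{4.5}, and convert the $\xi=0$ flux into $u_-^{-1}\int_0^t\psi_\xi^2(\tau,0)\,\mathrm{d}\tau$ using \eqref{log(tildeofv)}, $\phi(\tau,0)=0$ and \eqref{Equality}. One small remark: the boundary contribution is not of favorable sign --- the flux puts $-\tfrac{\mu|s_-|}{2}\bigl(\tilde v_\xi/\tilde v\bigr)^2(\tau,0)$ on the left-hand side --- so it must indeed be carried to the right-hand side as the error term $u_-^{-1}\int_0^t\psi_\xi^2(\tau,0)\,\mathrm{d}\tau$, exactly as your first paragraph (and the lemma) does; your later phrase about a ``favorable sign'' is just a slip of wording, not of substance.
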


Multiplying $(\ref{RWperturbation})_2$ by $-\psi_{\xi\xi}$, we get a divergence form
\begin{equation} \label{RWdivergence3}
   \left[\frac{1}{2}\psi_{\xi}^2\right]_t+\left[\frac{s_-}{2}\psi_{\xi}^2
   -\psi_t\psi_{\xi}\right]_{\xi}+\mu\frac{\psi_{\xi\xi}^2}{v}
   =\sum_{i=1}^3 R_i-\mu\psi_{\xi\xi}\left(\frac{U_{\xi}}{V}\right)_{\xi},
\end{equation}
where $R_i$ $(i=1,2,3)$ are defined in (\ref{BLdivergence3}).
Therefore, integrating (\ref{RWdivergence3}) over $(0,t)\times(0,\infty)$ yields
\begin{equation} \label{3rdEstimateRW}
  \begin{aligned}
    M(t):=~&\|\psi_{\xi}(t)\|^2+u_-\int_0^t\psi_{\xi}^2(\tau,0)\mathrm{d}\tau
    +\int_0^t\left\|\frac{\psi_{\xi\xi}}{\sqrt{v}}(\tau)\right\|^2\mathrm{d}\tau\\[2mm]
    \lesssim~ &\|\psi_{0\xi}\|^2
    +(m^{\gamma-1}+M^{\gamma-1}\epsilon^2)\int_0^t\int_0^{\infty}
    \frac{\tilde{v}_{\xi}^2}{v^{\gamma}\tilde{v}^2}\mathrm{d}\xi \mathrm{d}\tau+M\epsilon  \\[2mm]
    &+\int_0^t\int_0^{\infty}\left[J\phi^2+m^2V_{\xi}^2\frac{\psi_{\xi}^2}{v}\right]\mathrm{d}\xi \mathrm{d}\tau
    +\int_0^t\left\|\frac{\phi_{\xi}}{v}\right\|^4\left\|\frac{\psi_{\xi}}{\sqrt{v}}\right\|^2\mathrm{d}\tau,
  \end{aligned}
\end{equation}
where $J$ is defined by (\ref{J}).
Employing (\ref{Uxip}), (\ref{Uxixip}) and (\ref{Phiphi}), we have, by Lemma 4.1,
\begin{equation} \label{4.1}
  \begin{aligned}
        \int_0^t\int_0^{\infty}\left[J\phi^2+m^2V_{\xi}^2\frac{\psi_{\xi}^2}{v}\right]\mathrm{d}\xi \mathrm{d}\tau
        \lesssim ~& \int_0^t\int_0^{\infty}\left[\left((m^{2\gamma+3}+M)V_{\xi}^2
              +mU_{\xi\xi}^2\right)\phi^2
              + m^2\epsilon^2\frac{\psi_{\xi}^2}{v}\right]\mathrm{d}\xi \mathrm{d}\tau\\[2mm]
        \lesssim ~&  M^{\gamma+1}(m^{2\gamma+3}+M)\epsilon^{\frac{2}{3}}
        \left[{\left\|(\sqrt{\Phi_0},\psi_0)\right\|}^2+{\epsilon}^{\frac{2}{9}}M^{\frac{1}{2}}\right].
  \end{aligned}
\end{equation}
For the last term on the right-hand side of (\ref{3rdEstimateRW}), we find from (\ref{3.1.2}) that
\begin{equation} \label{4.2}
  \begin{aligned}
    \int_0^t\left\|\frac{\phi_{\xi}}{v}(\tau)\right\|^4
    \left\|\frac{\psi_{\xi}}{\sqrt{v}}(\tau)\right\|^2\mathrm{d}\tau
    \lesssim \left[{\left\|(\sqrt{\Phi_0},\psi_0)\right\|}^2+{\epsilon}^{\frac{2}{9}}M^{\frac{1}{2}}\right]
    \left[\left(u_-^{-1}\int_0^t \psi_{\xi}^2(\tau,0)\mathrm{d}\tau\right)^2\right.\\[2mm]
    \left.+\epsilon^4m^4M^{2\gamma+2}\left({\left\|(\sqrt{\Phi_0},\psi_0)\right\|}^2
    +{\epsilon}^{\frac{2}{9}}M^{\frac{1}{2}}\right)^2
    +B^2\right],
  \end{aligned}
\end{equation}
according to Lemma 4.1 and Lemma 4.2.
Substitute (\ref{2ndEstimateRW}), (\ref{4.1}) and (\ref{4.2}) into (\ref{3rdEstimateRW}),
and then apply Cauchy's inequality to discover
\begin{equation}
  M(t)\lesssim A_1+A_2 M(t)^2,
\end{equation}
where
\begin{equation}
  \label{A1RW}\begin{aligned}
  A_1=~&
  \|\psi_{0\xi}\|^2+
  (m^{\gamma-1}+M^{\gamma-1}\epsilon^2)B+
  \frac{(m^{\gamma-1}+M^{\gamma-1}\epsilon^2)^2}{\|(\sqrt{\Phi_0},\psi_0)\|^2+{\epsilon}^{\frac{2}{9}}
  M^{\frac{1}{2}}}\\[2mm]
  &+M\epsilon+\left[{\left\|(\sqrt{\Phi_0},\psi_0)\right\|}^2+{\epsilon}^{\frac{2}{9}}M^{\frac{1}{2}}\right]
  \Bigg[M^{\gamma+1}(m^{2\gamma+3}+M)\epsilon^{\frac{2}{3}}\\[2mm]
  &+\epsilon^4m^4M^{2\gamma+2}\left({\left\|(\sqrt{\Phi_0},\psi_0)\right\|}^2
    +{\epsilon}^{\frac{2}{9}}M^{\frac{1}{2}}\right)^2+B^2\Bigg]
  \end{aligned}
\end{equation}
and
\begin{equation}
  \label{A2RW}
  A_2=u_-^{-4}\left[\left\|(\sqrt{\Phi_0},\psi_0)\right\|^2+{\epsilon}^{\frac{2}{9}}M^{\frac{1}{2}}\right].
\end{equation}
Then employing Lemma \ref{Strauss}, we conclude the following lemma.
\begin{Lemma}   There is a  constant $c_3$ independent of $T$, $m$, $M$, $\delta$ and $a$, such that
  if $\epsilon$ is suitably small and
  \begin{equation}
    \label{conditionRW}
    A_1A_2<c_4,
  \end{equation}
  where $A_1$ and $A_2$ are defined by (\ref{A1RW}) and (\ref{A2RW}), respectively.
  Then it holds that for each $0\leq t\leq T$,
  \begin{equation}
    \label{3rdEstimate1RW}
    \|\psi_{\xi}(t)\|^2+u_-\int_0^t\psi_{\xi}^2(\tau,0)\mathrm{d}\tau
    +\int_0^t\left\|\frac{\psi_{\xi\xi}}{\sqrt{v}}(\tau)\right\|^2\mathrm{d}\tau
    \lesssim A_1
  \end{equation}
  and
  \begin{equation} \label{2ndEstimate1RW}
\begin{aligned}
    \left\|\frac{\tilde{v}_{\xi}}{\tilde{v}}(t)\right\|^2
    +\int_0^t\int_0^{\infty}\frac{\tilde{v}_{\xi}^2}{v^{\gamma}\tilde{v}^2}\mathrm{d}\xi \mathrm{d}\tau
    \lesssim B+u_-^{-2}A_1,
\end{aligned}
\end{equation}
where $B$ is defined by (\ref{B}).
\end{Lemma}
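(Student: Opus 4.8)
The plan is to read off the two asserted bounds from the closed quadratic inequality established just above the statement, namely
\begin{equation*}
  M(t)\lesssim A_1+A_2\,M(t)^2,\qquad 0\le t\le T,
\end{equation*}
obtained by inserting (\ref{2ndEstimateRW}), (\ref{4.1}) and (\ref{4.2}) into (\ref{3rdEstimateRW}) (with the basic energy estimate and Lemma~4.2, i.e. the estimate (\ref{2ndEstimateRW}), used throughout, and $\epsilon$ taken small enough that both apply). The first observation is that at the initial time $M(0)=\|\psi_{0\xi}\|^2$, which is one of the summands of $A_1$ in (\ref{A1RW}); hence $M(0)\lesssim A_1$, and since $M$ is a nonnegative continuous function of $t$, the iteration lemma is applicable on all of $[0,T]$.

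Concretely, I would apply Lemma~\ref{Strauss} with $\kappa=2$ and with $A_1$, $A_2$ given by (\ref{A1RW}), (\ref{A2RW}). For $\kappa=2$ its smallness hypothesis $A_1A_2^{1/(\kappa-1)}<C(1-\kappa^{-1})\kappa^{-1/(\kappa-1)}$ reduces to $A_1A_2<C/4$, which is exactly the assumed condition (\ref{conditionRW}) once the constant $c_4$ is fixed as $C/4$; the lemma then yields $M(t)\lesssim A_1/(1-\kappa^{-1})=2A_1\lesssim A_1$ for all $t\in[0,T]$, which is precisely (\ref{3rdEstimate1RW}). It then remains to feed this bound back into Lemma~4.2: from $M(t)\lesssim A_1$ one extracts in particular $u_-\int_0^t\psi_\xi^2(\tau,0)\,\mathrm{d}\tau\lesssim A_1$, hence $\int_0^t\psi_\xi^2(\tau,0)\,\mathrm{d}\tau\lesssim u_-^{-1}A_1$, and substituting this into (\ref{2ndEstimateRW}) gives
\begin{equation*}
  \left\|\frac{\tilde v_\xi}{\tilde v}(t)\right\|^2+\int_0^t\!\!\int_0^\infty\frac{\tilde v_\xi^2}{v^\gamma\tilde v^2}\,\mathrm{d}\xi\,\mathrm{d}\tau\lesssim B+u_-^{-1}\cdot u_-^{-1}A_1=B+u_-^{-2}A_1,
\end{equation*}
which is (\ref{2ndEstimate1RW}), completing the proof.

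The one genuinely delicate point sits upstream, in the derivation of $M(t)\lesssim A_1+A_2M(t)^2$ itself, and that is precisely where the largeness of $u_-$ (via $u_-\ge C\epsilon^{-l_0}$) must be exploited: the boundary integral $\int_0^t\psi_\xi^2(\tau,0)\,\mathrm{d}\tau$ appears inside $M(t)$ with the weight $u_-$ — it is the boundary flux produced by (\ref{RWdivergence3}) together with (\ref{Equality}) — while it reappears on the right-hand side, linearly through Lemma~4.2 and quadratically through (\ref{4.2}). The trick is to rewrite $u_-^{-1}\int_0^t\psi_\xi^2(\tau,0)\,\mathrm{d}\tau=u_-^{-2}\bigl(u_-\int_0^t\psi_\xi^2(\tau,0)\,\mathrm{d}\tau\bigr)\le u_-^{-2}M(t)$ and, likewise, to bound the square of the boundary integral by $u_-^{-4}M(t)^2$; then Young's inequality splits the linear-in-$M$ contribution into the reciprocal term $(m^{\gamma-1}+M^{\gamma-1}\epsilon^2)^2/(\|(\sqrt{\Phi_0},\psi_0)\|^2+\epsilon^{2/9}M^{1/2})$ inside $A_1$ and a remainder absorbed into $A_2M(t)^2$, which carries exactly the $u_-^{-4}$ factor of (\ref{A2RW}). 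Once this bookkeeping is arranged and $\epsilon$ is small, the Strauss iteration closes the argument and the two estimates follow as above.
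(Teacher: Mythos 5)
Your proposal is correct and follows essentially the same route as the paper: the bound (\ref{3rdEstimate1RW}) is obtained by applying Lemma~\ref{Strauss} with $\kappa=2$ to the quadratic inequality $M(t)\lesssim A_1+A_2M(t)^2$ derived just before the lemma (with $M(0)=\|\psi_{0\xi}\|^2\lesssim A_1$ and (\ref{conditionRW}) playing the role of the smallness hypothesis), and (\ref{2ndEstimate1RW}) then follows by feeding $u_-\int_0^t\psi_{\xi}^2(\tau,0)\,\mathrm{d}\tau\lesssim A_1$ back into (\ref{2ndEstimateRW}). Your account of how the $u_-$-weights and Young's inequality produce the terms of $A_1$ and $A_2$ is likewise consistent with the paper's bookkeeping.
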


\noindent
\textbf{Proof of Theorem 3.}
Since $({\phi}_0,{\psi}_0)\in H_0^1(\mathbb{R}_{+})$,
we find $t_0>0$ from Lemma \ref{localsRW} such that
(\ref{RWperturbation}) has a unique solution $(\phi,\psi)\in X_{1/m_0,M_0}(0,t_0)$
with $m_0,M_0\leq C\epsilon^{-l}$.
Then we deduce that if (\ref{indexR})$_1$ holds, then
$\|\sqrt{\Phi_0},\psi_0\|^2+{\epsilon}^{{2}/{9}}M_0^{{1}/{2}}$ and $B$ are bounded by
$
    C_1(1+\epsilon^{-2\alpha-(\gamma+1)l}).
$
Then we compute that if (\ref{indexR})$_1$ holds, then
$A_1$ and $A_2$ are, respectively, bounded by
$C_2(1+\epsilon^{-6\alpha-3(\gamma+1)l})$ and $C_3(1+\epsilon^{4l_0-2\alpha-(\gamma+1)l}).$
Hence if $(\ref{indexR})_2$ holds, there exists $\epsilon_1>0$ such that
(\ref{conditionRW}) holds for each $\epsilon \leq \epsilon_1$.
Next we compute from
$(\ref{indexR})_{2}$ that the right-hand side of (\ref{2ndEstimate1RW})
is bounded by $C_4(1+\epsilon^{-2\alpha-(\gamma+1)l})$.
We conclude from Lemma 4.1 and Lemma 4.3 that
\begin{equation}
  \begin{aligned}
    \left|\Psi(\tilde{v}(t_0,\xi))\right|
    \leq \bigg\| \sqrt{\tilde{\Phi}}(t_0) \bigg\| \bigg\| \frac{\tilde{v}_{\xi}}{\tilde{v}}(t_0) \bigg\|
    \leq C\epsilon^{\theta},
  \end{aligned}
\end{equation}
where $\theta=-2\alpha-(\gamma+1)l.$
Hence
\begin{equation} \label{boundofvR}
  C^{-1}\epsilon^{{2\theta}/(1-\gamma)}\leq v (t_0)\leq C\epsilon^{2\theta}.
\end{equation}

Since $(\ref{indexR})_3$ implies $2\theta/(\gamma-1)\leq -l$ and $ 2\theta \leq -l$,
we apply Lemma \ref{localsRW} again and recall (\ref{boundofvR}) to find $t_1>0$
such that (\ref{BLperturbation}) has a unique solution $(\phi,\psi)\in X_{1/m_1,M_1}(0,t_0+t_1)$,
where $m_1 \leq  C \epsilon^{2\theta/(\gamma-1)}$ and $ M_1 \leq C \epsilon ^{2\theta}$.
By elementary calculations, we conclude from $(\ref{indexR})_4$ that
$\|\sqrt{\Phi_0},\psi_0\|^2+{\epsilon}^{{2}/{9}}M_0^{{1}/{2}}$ and $B$ are bounded by
$
    C_1(1+\epsilon^{-2\alpha-(\gamma+1)l}).
$
Then we compute that if (\ref{indexR})$_4$ holds, then
$A_1$ and $A_2$ are, respectively, bounded by
$C_5(1+\epsilon^{-6\alpha-3(\gamma+1)l})$ and $C_6(1+\epsilon^{4l_0-2\alpha-(\gamma+1)l}).$
Hence if $(\ref{indexR})_2$ holds, there exists $\epsilon_0>0$ such that
(\ref{conditionRW}) holds for each $\epsilon \leq \epsilon_0$.
Thus, we find from $(\ref{indexR})_2$ that the right-hand side of (\ref{2ndEstimate1RW})
is bounded by $C_4(1+\epsilon^{-2\alpha-(\gamma+1)l})$.
Then, combining Lemma \ref{localsRW} and the continuation process,
we can prove (\ref{BL}) has the global solution
in time $(\phi,\psi)\in X_{1/m_1,M_1}(0,\infty)$
satisfying
(\ref{estimate}) with the constant $C$ depending only on $\epsilon$.
Thus, the asymptotic behavior of the solution (\ref{largetime})
is concluded by employing Sobolev's inequality.$\quad\quad \Box$

\bigbreak

\begin{center}
{\bf Acknowledgement}
\end{center}
This work was supported by ``the Fundamental Research Funds for the Central
Universities" and four grants
from the National Natural Science Foundation of China under
contracts 10925103, 11271160, 11261160485, and 11301405, respectively.
This work was completed when Tao Wang was visiting the Mathematical
Institute at the University of Oxford under the support of the China Scholarship
Council 201206270022. He would like to thank Professor Gui-Qiang Chen and his
group for their kind hospitality.

\small


\begin{thebibliography}{99}

 \bibitem{H-M-S} F. M. Huang, A. Matsumura and X. D. Shi, Viscous shock wave and boundary layer to an inflow problem for compressible viscous gas. {\it Comm. Math. Phys.} {\bf 239} (2003), no. 1-2, 261-285.
 \bibitem{H-Q} F. M. Huang and X. H. Qin, Stability of boundary layer and rarefaction wave to an outflow problem for compressible Navier-Stokes equations under large perturbation. {\it J. Differential Equations} {\bf 246} (2009), no. 10, 4077-4096.
 \bibitem{Kanel'} Ya. Kanel', A model system of equations for the one-dimensional motion of a gas. (Russian) {\it Differencial'nye Uravnenija} {\bf 4} (1968), 721-734; English transl. in {\it Diff. Eqns.} {\bf 4} (1968), 374-380.
 \bibitem{Kawashima-Nishibata-Zhu} S. Kawashima, S. Nishibata and P. C. Zhu, Asymptotic stability of the stationary solution to the compressible Navier-Stokes equations in the half space. {\it Comm. Math. Phys.} {\bf 240} (2003), no. 3, 483-500.
 \bibitem{Kawahima-Zhu} S. Kawashima and P. C. Zhu, Asymptotic stability of nonlinear wave for the compressible Navier-Stokes equations in the half space. {\it J. Differential Equations} {\bf 244} (2008), no. 12, 3151-3179.
 \bibitem{M-MAA-2001} A. Matsumura, Inflow and outflow problems in the half space for a one-dimensional isentropic model system of compressible viscous gas. IMS Conference on Differential Equations from Mechanics (Hong Kong, 1999). {\it Methods Appl. Anal.} {\bf 8} (2001), no. 4, 645-666.
 \bibitem{M-Mei} A. Matsumura and M. Mei, Convergence to travelling fronts of solutions of the $p$-system with viscosity in the presence of a boundary. {\it Arch. Ration. Mech. Anal.} {\bf 146} (1999), no. 1, 1-22.
 \bibitem{M-N1992} A. Matsumura and K. Nishihara, Global stability of the rarefaction wave of a one-dimensional model system for compressible viscous gas. {\it Comm. Math. Phys.} {\bf 144} (1992), no. 2, 325-335.
 \bibitem{M-N2000} A. Matsumura and K. Nishihara, Global asymptotics toward the rarefaction wave for solutions of viscous $p$-system with boundary effect. {\it Quart. Appl. Math.} {\bf 58} (2000), no. 1, 69-83.
 \bibitem{M-N2001} A. Matsumura and K. Nishihara, Large-time behaviors of solutions to an inflow problem in the half space for a one-dimensional system of compressible viscous gas. {\it Comm. Math. Phys.} {\bf 222} (2001), no. 3, 449-474.
 \bibitem{Nakamura-Nishibata-Yuge} T. Nakamura, S. Nishibata and T. Yuge, Convergence rate of solutions toward stationary solutions to the compressible Navier-Stokes equation in a half line. {\it J. Differential Equations} {\bf 241} (2007), no. 1, 94-111.
 \bibitem{Nikkuni-Kawashima} Y. Nikkuni and S. Kawashima, Stability of stationary solutions to the half-space problem for the discrete Boltzmann equation with multiple collisions. {\it Kyushu J. Math.} {\bf 54} (2000), no. 2, 233-255.
\bibitem{Qin} X. H. Qin, Ph. D. Thesis, Institute of Applied Mathematics, Academy of Mathematics and System Sciences, the Chinese Academy of Sciences (in Chinese).
 \bibitem{Qin-Wang} X. H. Qin and Y. Wang, Stability of wave patterns to the inflow problem of full compressible Navier-Stokes equations. {\it SIAM J. Math. Anal.} {\bf 41} (2009), no. 5, 2057-2087.
 \bibitem{Shi} X. D. Shi, On the stability of rarefaction wave solutions for viscous $p$-system with boundary effect. {\it Acta Math. Appl. Sin. Engl. Ser.} {\bf 19} (2003), no. 2, 341-352.
 \bibitem{Strauss} W. A. Strauss, Decay and asymptotics for $u_{tt}-\triangle u=F(u)$. {\it J. Functional Analysis} {\bf 2} (1968), 409-457.

\end{thebibliography}
\end{document}